\newtheorem{theorem}{Theorem}[section]
\newtheorem{corollary}{Corollary}[section]
\newtheorem{lemma}{Lemma}[section]
\theoremstyle{definition}
\newtheorem{remark}{Remark}[section]
\def\ANP{\emph{Annals of Probability\/}}
\def\IME{\emph{In\-su\-ran\-ce: Ma\-the\-ma\-tics and Eco\-no\-mics\/}}
\def\LMJ{\emph{Lith. Math. J.\/}}
\def\TPA{\emph{Theory Probab. Appl.}}
\def\JRSS{\emph{Jour\-nal of the Royal Sta\-tist. Soc., Ser. B\/}}
\numberwithin{equation}{section}
\newcommand{\NomiN}[1]{\varrho_{#1}}
\newcommand{\DeNomiN}[1]{\vartheta_{#1}}
\def\eqOK{=}
\def\myEq{=}
\def\eqSic{=}
\newcommand{\Integral}[1]{\mathcal{I}_{\,#1}}
\newcommand{\Integrall}[1]{\mathcal{T}_{\,#1}}
\newcommand{\Jntegral}[1]{\mathcal{J}_{\,#1}}
\newcommand{\IntegraL}[2]{\mathcal{I}_{\,#1,#2}}
\newcommand{\IntegralL}[2]{\mathcal{T}_{\,#1,#2}}
\def\mcA{\mathcal{A}}
\def\mcB{\mathcal{B}}
\def\mcC{\mathcal{C}}
\newcommand{\OneVar}{\mathcal{X}}
\newcommand{\TwoVar}{\mathcal{V}}
\newcommand{\MainApprox}[1]{\mathcal{A}_{#1}}
\newcommand{\SeqApprox}[2]{\mathcal{A}^{\langle #2\rangle}_{#1}}
\newcommand{\MainRemTerm}[1]{\mathcal{R}_{#1}}
\newcommand{\SeqRemTerm}[2]{\mathcal{R}^{\langle #2\rangle}_{#1}}
\newcommand{\IntOnE}{\EuScript{S}}
\newcommand{\IntOne}[1]{\EuScript{S}^{[#1]}}
\newcommand{\IntOneHaT}{\hat{\EuScript{S}}}
\newcommand{\IntOneHat}[1]{\hat{\EuScript{S}}^{[#1]}}
\newcommand{\IntTwO}{\EuScript{G}}
\newcommand{\IntTwo}[1]{\EuScript{G}^{[#1]}}
\newcommand{\IntTwoHat}{\hat{\EuScript{G}}}
\newcommand{\cK}[1]{K^{#1}_{c}\hskip 0.5pt}
\def\hatM{M_{{u+\cS}v,n}}
\def\hatL{L_{{u+\cS}v,n}}
\def\hatR{R_{{u+\cS}v,n}}
\def\XatM{M_{{u+c}v,n}}
\def\XatL{L_{{u+c}v,n}}
\def\XatR{R_{{u+c}v,n}}
\def\cS{c^{*}}
\newcommand{\timeR}{\Upsilon}
\renewcommand{\P}{\mathsf{P}}
\newcommand{\p}{\mathsf{p}}
\newcommand{\R}{\mathsf{R}}
\newcommand{\D}{\mathsf{D}}
\newcommand{\E}{\mathsf{E}}
\newcommand{\Uex}{U_{\epsilon,x}}
\newcommand{\EnOne}{N_{\epsilon}}
\newcommand{\Y}[1]{Y_{#1}}
\newcommand{\T}[1]{T_{#1}}
\newcommand{\X}[1]{X_{#1}}
\newcommand{\UGauss}[2]{\varPhi_{\left({#1},{#2}\right)}}
\newcommand{\Ugauss}[2]{\varphi_{\left({#1},{#2}\right)}}
\newcommand{\homN}[1]{N_{#1}}
\newcommand{\homV}[1]{V_{#1}}
\newcommand{\muIG}{\mu}
\newcommand{\lambdaIG}{\lambda}
\def\mmu{\hat{\mu}}
\def\paramY{\alpha}
\def\paramT{\beta}
\newcommand{\Int}[2]{{\mathcal{#2}}_{#1}}
\begin{document}
\author[Vsevolod K. Malinovskii]{\Large Vsevolod K. Malinovskii}

\keywords{Time of first level crossing, Compound renewal processes, Inverse
Gaussian distributions.}

\address{Central Economics and Mathematics Institute (CEMI) of Russian Academy of Science,
117418, Nakhimovskiy prosp., 47, Moscow, Russia}

\email{malinov@orc.ru, malinov@mi.ras.ru}

\urladdr{http://www.actlab.ru}

\title[ON THE TIME OF FIRST LEVEL CROSSING]{ON THE TIME OF FIRST LEVEL CROSSING AND INVERSE GAUSSIAN DISTRIBUTION}

\maketitle

\begin{abstract}
We propose a new approximation for the distribution of the time of the first
level $u$ crossing by the random process $\homV{s}-cs$, where $\homV{s}$,
$s>0$, is compound renewal process and $c>0$. It is competitive with respect to
existing approximations, particularly in the region around the critical point
$c=\cS$ which separates processes with positive and negative drifts. This
approximation is tightly related to inverse Gaussian distributions.
\end{abstract}

\section{Introduction and main result}\label{dsrgherhr}

The inverse Gaussian distribution has probability density function (p.d.f.)
\begin{equation}\label{wqrdtgrehr}
f\left(x;\muIG,\lambdaIG,-\tfrac{1}{2}\right)=
\frac{\lambdaIG^{1/2}}{\sqrt{2\pi}}\,x^{-3/2}\exp\left\{-\frac{\lambdaIG(x-\muIG)^2}{2\muIG^2
x}\right\},
\end{equation}
where $x$, $\muIG$, and $\lambdaIG$ are positive. Parameter $\lambdaIG$ is
called shape parameter, and $\muIG$ is called mean parameter.

In the study of this distribution, paramount is finding explicit expression
\begin{multline}\label{werthrmrt}
F\left(x;\muIG,\lambdaIG,-\tfrac{1}{2}\right)=\int_{0}^{x}f\left(z;\muIG,\lambdaIG,-\tfrac{1}{2}\right)dz
\\
\eqOK\UGauss{0}{1}\left(\sqrt{\frac{\lambdaIG}{x}}\left(\frac{x}{\muIG}-1\right)\right)
+\exp\bigg\{\frac{2\lambdaIG}{\muIG}\bigg\}\,\UGauss{0}{1}
\left(-\sqrt{\frac{\lambdaIG}{x}}\left(\frac{x}{\muIG}+1\right)\right)
\end{multline}
for cumulative distribution function (c.d.f.) corresponding to p.d.f.
\eqref{wqrdtgrehr}; by $\UGauss{0}{1}(x)$ and $\Ugauss{0}{1}(x)$ we denote
c.d.f. and p.d.f. of a standard normal distribution.

By $f_{\T{1}}(t)$ and $f_{T}(t)$ we denote p.d.f. of positive random variable
$\T{1}$, and of positive random variables $\T{i}\overset{d}{=}T$,
$i=2,3,\dots$, all distributed identically. The random variable $\T{1}$ is the
time between starting time zero and time of the first renewal, and the random
variables $\T{i}$ are inter-renewal times. By $f_{Y}(t)$ we denote p.d.f. of
positive random variables $\Y{i}\overset{d}{=}Y$, $i=1,2,\dots$, all
distributed identically. The random variables $\Y{i}$ are called jump sizes,
and the jumps occur only in the moments of renewals. Throughout the entire
presentation, p.d.f. $f_{T}(y)$ and $f_{Y}(y)$ are assumed bounded from above
by a finite constant. Having assumed that $\T{1}$, i.i.d.
$\T{i}\overset{d}{=}T$, $i=2,3,\dots$, i.i.d. $\Y{i}\overset{d}{=}Y$,
$i=1,2,\dots$, are all mutually independent, we are within renewal model, where
the distribution of the first interval $\T{1}$ may be different from the
distribution of the other interclaim intervals, i.e., from the distribution of
$T$.

Compound renewal process with time $s\geqslant 0$ is
\begin{equation*}
\homV{s}=\sum_{i=1}^{\homN{s}}\Y{i},
\end{equation*}
or $0$, if $\homN{s}=0$ (or $\T{1}>s$), where
$\homN{s}=\max\left\{n>0:\sum_{i=1}^{n}\T{i}\leqslant s\right\}$, or $0$, if
$\T{1}>s$. The random variable
\begin{equation}\label{wertrjnmt}
\timeR=\inf\left\{s>0:\homV{s}-cs>u\right\},
\end{equation}
or $+\infty$, as $\homV{s}-cs\leqslant u$ for all $s>0$, is the time of the
first level $u$ crossing by the process $\homV{s}-cs$.

It is easily seen that for $t>0$
\begin{equation}\label{ewrktulertye}
\P\{\timeR\leqslant
t\}=\int_{0}^{t}\P\{u+cv-\Y{1}<0\}f_{\T{1}}(v)dv+\int_{0}^{t}\P\{v<\timeR\leqslant
t\mid\T{1}=v\}f_{\T{1}}(v)dv.
\end{equation}

This distribution of $\timeR$ appears in many branches of applied probability,
including risk and queueing theories, and was considered by many authors. For
it, there are many closed-form formulas and approximations, derived by
different techniques. The goal of this paper is to get the approximation that
involves inverse Gaussian distribution, and seems new. Remarkable is that it is
derived under a set of conditions similar to those usually imposed in the
common local central limit theorem.

\begin{figure}[t]
\includegraphics[scale=0.8]{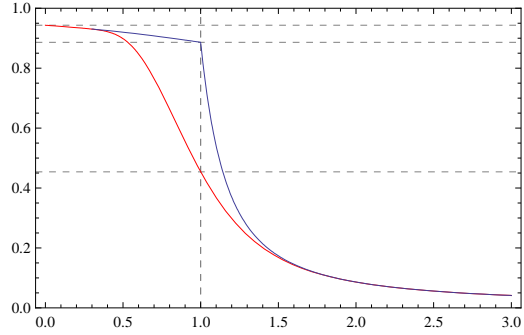}
\caption{\small Graphs ($X$-axis is $c$) of the functions
$\Int{\infty}{M}(u,c,0)$ and $\Int{t}{M}(u,c,0)$, as $M=1$, $D^2=6$, $t=100$,
$u=15$. Horizontal lines are $\Int{\infty}{M}(u,0,0)=0.943$,
$\Int{\infty}{M}(u,\cS,0)=0.886$, and $\Int{t}{M}(u,\cS,0)=0.454$, where
$\cS=\E{Y}/\E{T}$.}\label{sdrtgerher}
\end{figure}

Set $M={\E{T}}/{\E{Y}}$, $D^2=((\E{T})^2\D{Y}+(\E{Y})^2\D{T})/(\E{Y})^3$, and
introduce
\begin{equation*}
\Int{t}{M}(u,c,v)=\int_{0}^{\frac{c(t-v)}{u+cv}}\frac{1}{1+x}
\,\Ugauss{cM(1+x)}{\frac{c^2D^2(1+x)}{u+cv}}(x)dx.
\end{equation*}
For $\cS=\frac{1}{M}$, $\lambdaIG=\frac{u+cv}{c^2D^2}>0$,
$\muIG=\frac{1}{1-cM}$, and $\mmu=-\muIG=\frac{1}{cM-1}$, using
\eqref{wqrdtgrehr} and \eqref{werthrmrt}, we have
\begin{equation*}
\begin{aligned}
\Int{t}{M}(u,c,v)&=\begin{cases}
F\big(x;\muIG,\lambdaIG,-\tfrac{1}{2}\big)\Big|_{x=1}^{\frac{c(t-v)}{u+cv}+1},&0<c\leqslant\cS,
\\[6pt]
\exp\big\{-2\,\tfrac{\lambdaIG}{\mmu}\big\}
F\big(x;\mmu,\lambdaIG,-\tfrac{1}{2}\big)\Big|_{x=1}^{\frac{c(t-v)}{u+cv}+1},&
c\geqslant\cS
\end{cases}
\\
&\eqOK
\bigg[\UGauss{0}{1}\Big(\tfrac{\sqrt{u+cv}}{cD\sqrt{x}}\big(x(1-cM)-1\big)\Big)
\\[-4pt]
&\hskip
28pt+\exp\Big\{2\frac{u+cv}{c^2D^2}(1-cM)\Big\}\UGauss{0}{1}\Big(-\tfrac{\sqrt{u+cv}}{cD
\sqrt{x}}\big(x(1-cM)+1\big)\Big)\bigg]\bigg|_{x=1}^{\frac{c(t-v)}{u+cv}+1}.
\end{aligned}
\end{equation*}

\begin{theorem}\label{srdthjrf}
In the above model, let p.d.f. $f_{T}(y)$ and $f_{Y}(y)$ be bounded from above
by a finite constant, $D^2>0$, $\E({T}^3)<\infty$, $\E({Y}^3)<\infty$. Then for
$c>0$, for fixed $0<v<t$ we have
\begin{equation}\label{werhrjet}
\sup_{t>v}\Big|\,\P\{v<\timeR\leqslant t\mid\T{1}=v\}-\Int{t}{M}(u,c,v)\Big|
=\underline{O}\bigg(\frac{\ln(u+cv)}{u+cv}\bigg),
\end{equation}
as $u+cv\to\infty$.
\end{theorem}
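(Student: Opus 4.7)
The plan is to decompose the conditional probability according to the number of renewals preceding the first crossing, apply a bivariate local central limit theorem to the joint density of the relevant partial sums, and then recognize the resulting sum as a Riemann approximation to $\Int{t}{M}(u,c,v)$. Since $\homV{s}-cs$ strictly decreases between renewals and jumps upward at each renewal, the crossing must occur at a renewal time. Setting $W_n=\Y{1}+\dots+\Y{n}$ and $\tau_n=\T{2}+\dots+\T{n}$ (with $\tau_{1}=0$), on the event $\{v<\timeR\}$ we have $\timeR=v+\tau_N$ where $N=\inf\{n\geqslant 2:W_n>u+c(v+\tau_n)\}$, so
\begin{equation*}
\P\{v<\timeR\leqslant t\mid\T{1}=v\}=\sum_{n\geqslant 2}\P\{N=n,\ \tau_n\leqslant t-v\}.
\end{equation*}

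Next I would rewrite each term as an integral of the joint density $p_n(w,\tau)$ of $(W_n,\tau_n)$ over the region where $w>u+c(v+\tau)$ and no earlier crossing has occurred. Under the assumptions $\E({T}^3)<\infty$, $\E({Y}^3)<\infty$ together with boundedness of $f_T,f_Y$, a Statulevi\v{c}ius--Petrov type bivariate local CLT yields $p_n(w,\tau)=\phi_n(w,\tau)+\underline{O}(n^{-3/2})$ uniformly in $(w,\tau)$, where $\phi_n$ is the bivariate Gaussian with mean $(n\E Y,(n-1)\E T)$ and covariance $n\,\mathrm{diag}(\D Y,\D T)$. The substitutions $s=v+\tau$, $n\approx(u+cs)/\E Y$, $x=c(s-v)/(u+cv)$ then turn the sum into an integral over $s\in(v,t]$ and project the bivariate Gaussian onto the line $w=u+cs$, producing exactly the density $(1+x)^{-1}\Ugauss{cM(1+x)}{c^2D^2(1+x)/(u+cv)}(x)$ appearing in $\Int{t}{M}(u,c,v)$; the factor $(1+x)^{-1}$ emerges as a Jacobian combined with the renewal density asymptotics $\sum_n p_n\sim 1/\E T$, and the constants $M,\,D^2$ arise as the parameters of the induced one-dimensional Gaussian on the projection line.

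The principal obstacle will be the first-passage constraint in the decomposition: the event $\{N=n\}$ is not simply $\{W_n>u+c(v+\tau_n)\}$ but also requires no earlier crossing. For $c<\cS$ the walk $\Y{i}-c\T{i}$ has positive mean drift and a direct monotonicity argument shows the constraint removes only a lower-order portion of the Gaussian mass; for $c>\cS$ an exponential Cram\'er--Lundberg estimate controls the overshoot; at the critical value $c=\cS$ both arguments degrade, and a uniform Sparre--Andersen cyclic-reversal identity applied to the walk $\{W_n-c(v+\tau_n)\}$ must be invoked to extract the same Gaussian main term. Aggregating the local-CLT errors $\underline{O}(n^{-3/2})$ over the $\asymp(u+cv)$ relevant indices, plus the boundary corrections from this critical regime and the tail contributions required by the supremum over $t$, yields the stated rate $\underline{O}(\ln(u+cv)/(u+cv))$; the logarithmic factor reflects precisely this supremum together with the slow Gaussian decay in the direction orthogonal to the drift.
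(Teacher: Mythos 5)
Your outline founders on the two points that carry all the weight of the actual proof. First, the first-passage constraint, which you correctly identify as ``the principal obstacle,'' is never actually resolved: you propose three different devices (monotonicity for $c<\cS$, a Cram\'er--Lundberg bound for $c>\cS$, and an unspecified ``Sparre--Andersen cyclic-reversal identity'' at $c=\cS$), but none of these is an argument, and the critical regime $c\approx\cS$ --- precisely the region the theorem is designed to cover uniformly --- is left resting on the unproven assertion that a cyclic identity ``must be invoked.'' The paper does not fight this constraint asymptotically at all: it removes it exactly, by reversing the jump direction and applying Kendall's identity to the dual skip-free L\'evy process (Theorem~\ref{23456uy7}), which yields the closed-form representation \eqref{qwretgjhmg},
\begin{equation*}
\P\{v<\timeR\leqslant t\mid\T{1}=v\}=\int_{v}^{t}\frac{u+cv}{u+cz}\sum_{n\geqslant 1}\P\{M(u+cz)=n\}f_{T}^{*n}(z-v)\,dz,
\end{equation*}
in which only unconstrained convolution densities appear; after that, no ``no earlier crossing'' event remains anywhere in the proof, uniformly in $c$. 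Without this (or an equally exact substitute), your decomposition over $N=\inf\{n\geqslant 2: W_n>u+c(v+\tau_n)\}$ leaves you with a genuinely open combinatorial problem at the heart of the argument (and, as a minor point, your $N$ as written does not even encode the absence of a crossing at the first renewal, i.e.\ $Y_1\leqslant u+cv$).

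Second, your error accounting cannot produce the stated rate. A uniform bivariate local CLT gives $p_n(w,\tau)=\phi_n(w,\tau)+\underline{O}(n^{-3/2})$ uniformly, but summing $n^{-3/2}$ over the $\asymp(u+cv)$ relevant indices already gives only $\underline{O}((u+cv)^{-1/2})$, and integrating a uniform error over the $(w,\tau)$-region --- which is unbounded in $\tau$ once you take the supremum over $t$ --- is worse still. This is exactly why the paper uses the \emph{non-uniform} local bound of Theorem~\ref{w4e5y46yu43}, with the weight $(1+\|x\|)^{-3}$, and why Step~4 (the analysis of $\IntOnE$, $\IntOne{1}$, $\IntOne{2}$ via the identities of Lemma~\ref{ewrgfwdgsgw} and the explicit integrals of Lemmas~\ref{rthfgnmhfgj}--\ref{eruyjtky}) occupies most of the proof: the cubic decay in $\|x\|$ is what makes the accumulated error summable to $\underline{O}(\ln(u+cv)/(u+cv))$ uniformly in $t$ and in particular near $c=\cS$. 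Your final heuristic --- that the logarithm ``reflects the supremum over $t$'' --- is not a substitute for this analysis; in the paper the logarithm emerges from harmonic-type sums such as $\sum_{\EnOne<n<KU^2}n^{-1}$ in Sections~\ref{qasrdtjkyl}--\ref{wertkktukt}. The intended endgame (Riemann-sum passage from the sum over $n$ to the inverse Gaussian integral, with $1/(1+x)$ as a Jacobian) does match Step~5 of the paper in spirit, but as it stands the proposal is missing both the exact identity that eliminates the first-passage constraint and the quantitative machinery that yields the claimed order.
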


As soon as the distribution of $T_1$ is specified, the similar results for
$\P\{\timeR\leqslant t\}$ are straightforward from Theorem \ref{srdthjrf}. In
particular, for $\T{1}$ exponential with parameter $\paramT$
\begin{equation*}
\P\{\timeR\leqslant t\}=\paramT\int_{0}^{t}\P\{u+cv-\Y{1}<0\}e^{-\paramT
v}dv+\paramT\int_{0}^{t}\P\{v<\timeR\leqslant t\mid\T{1}=v\}e^{-\paramT v}dv.
\end{equation*}
If $Y$ is exponential with parameter $\paramY$, we have
\begin{multline*}
\P\{\timeR\leqslant t\}=\paramT\int_{0}^{t}e^{-\paramY(u+cv)}e^{-\paramT
v}dv+\paramT\int_{0}^{t}\P\{v<\timeR\leqslant t\mid\T{1}=v\}e^{-\paramT v}dv
\\
=\frac{\paramT e^{-\paramY
u}}{\paramT+c\paramY}\big(1-e^{-(\paramT+c\paramY)t}\big)+\paramT\int_{0}^{t}
\P\{v<\timeR\leqslant t\mid\T{1}=v\}e^{-\paramT v}dv.
\end{multline*}

\section{Closed-form expression using convolutions}\label{wertyjtkt}

The following result is a modification of a result in \citeNP{[Borovkov Dickson
2008]}.

\begin{theorem}\label{23456uy7}
For $M(s)=\inf\{k\geqslant 1:\sum_{i=1}^{k}Y_{i}>s\}-1$, we have
\begin{equation}\label{qwretgjhmg}
\P\{v<\timeR\leqslant
t\mid\T{1}=v\}=\int_{v}^{t}\dfrac{u+cv}{u+cz}\sum_{n=1}^{\infty}
\P\big\{M(u+cz)=n\big\}f_{T}^{*n}(z-v)dz,
\end{equation}
and
\begin{multline}\label{3eryjkuy}
\P\{\timeR\leqslant t\}=\int_{0}^{t}\P\{u+cv-Y_{1}<0\}f_{\T{1}}(v)dv
\\[0pt]
+\int_{0}^{t}\left[\int_{v}^{t}\frac{cv+u}{u+cs}
\sum_{n=1}^{\infty}\P\big\{M(u+cs)=n\big\}f_{T}^{*n}(s-v)ds\right]f_{\T{1}}(v)dv.
\end{multline}
\end{theorem}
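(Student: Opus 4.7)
The plan is to establish \eqref{qwretgjhmg} by a Kendall-type cyclic identity for the joint law of jump sizes and inter-renewal times of a compound renewal process, applied to the conditional setting $\T{1}=v$ in the spirit of Borovkov and Dickson (2008). Formula \eqref{3eryjkuy} then follows by inserting \eqref{qwretgjhmg} into the splitting identity \eqref{ewrktulertye} recalled in the Introduction, with the first summand in \eqref{3eryjkuy} accounting for $\timeR=v$ (equivalently, $\Y{1}>u+cv$) and the second for $v<\timeR\leqslant t$.

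For \eqref{qwretgjhmg}, I would first decompose the contribution to the density of $\timeR$ at $z\in(v,t]$ by the index $n+1\geqslant 2$ of the renewal at which the level is first crossed: given $\T{1}=v$, the density of $S_{n+1}:=v+\T{2}+\dots+\T{n+1}$ at $z$ is $f_{T}^{*n}(z-v)$, and one is left to evaluate (at fixed $z$) the ``first-crossing-at-step-$n+1$'' conditional probability. The Kendall/ballot argument, exploiting the exchangeability of the i.i.d.\ pairs $(\Y{i},\T{i})$ for $i\geqslant 2$ under their fixed partial sums, rearranges the sum over $n$ of these contributions into the aggregated form $\tfrac{u+cv}{u+cz}\sum_{n\geqslant 1}\P\{M(u+cz)=n\}\,f_{T}^{*n}(z-v)$. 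Here the numerator $u+cv$ records the deterministic descent from $0$ to $-cv$ over $[0,v]$ before the first jump, so the ladder problem starts from the level $u+cv$ rather than $u$; the denominator $u+cz$ is the standard Kendall normalization at the crossing time $z$; and the factor $\P\{M(u+cz)=n\}$ isolates the necessary $Y$-side condition $\sum_{i=1}^{n}\Y{i}\leqslant u+cz<\sum_{i=1}^{n+1}\Y{i}$ satisfied at first passage. Integrating in $z$ over $(v,t]$ then yields \eqref{qwretgjhmg}.

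The main obstacle is the Kendall/cyclic step itself, and in particular that the identity is genuinely aggregated: direct verification on simple examples (e.g.\ uniformly distributed jumps with $n=1$) shows that term-by-term the ``first-crossing-at-step-$n+1$'' probability does not equal $\tfrac{u+cv}{u+cz}\P\{M(u+cz)=n\}$, so the equality emerges only after summing over $n$, with the internal cancellation supplied by the joint cyclic symmetry of the pairs $(\Y{i},\T{i})_{i\geqslant 2}$ around the distinguished pair $(\Y{1},\T{1}=v)$. In the original Borovkov--Dickson argument this symmetry is deployed for the ordinary process ($\T{1}\overset{d}{=}\T{i}$); the modification here amounts to fixing $\T{1}=v$ deterministically, which affects only the starting level of the associated ladder problem by replacing $u$ with $u+cv$ in the Kendall factor. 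Once this ballot identity is in hand, the remaining work---summation over $n$, integration over $z$, and substitution into \eqref{ewrktulertye} to obtain \eqref{3eryjkuy}---is routine.
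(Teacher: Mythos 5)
Your passage from \eqref{qwretgjhmg} to \eqref{3eryjkuy} via the splitting identity \eqref{ewrktulertye} is the same as the paper's and is unobjectionable. The trouble is with \eqref{qwretgjhmg} itself: in your plan the entire content of the theorem sits inside the ``Kendall/cyclic step'', which you explicitly defer (``once this ballot identity is in hand, the remaining work is routine''). That step is not a routine citation, and it does not act where you place it: cyclic shifts of the pairs $(Y_i,T_i)$ in the original time scale do not preserve the first-passage constraints $\sum_{i\leqslant k}Y_i\leqslant u+c\tau_k$, which couple the $Y$'s to the partial sums of the $T$'s. The paper's proof instead makes the needed structure explicit by swapping the roles of time and space: conditionally on $T_1=v$ it introduces $Z(s)=\sum_{k\leqslant M(s)}T_k-s/c$, whose jumps are the $T_k$'s placed at the ``times'' $\sum_{i\leqslant k}Y_i$, observes that $Z$ is skip-free in the negative direction, writes $\Upsilon=(\sigma-u)/c$ as in \eqref{edrtfyreh} with $\sigma$ the first passage of $Z$ below $-(v+u/c)$, applies Kendall's identity $p_{\sigma}(s)=\frac{v+u/c}{s}\,p_{Z(s)}(-(v+u/c))$, and changes variables $s=u+cz$. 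Your ``ladder problem starting from level $u+cv$'' is exactly this dual process, but you never construct it, and without it the factor $\frac{u+cv}{u+cz}$ and the appearance of $\P\{M(u+cz)=n\}$ are precisely what remains to be proved.

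Moreover, your own observation that the identity fails term-by-term in $n$ should have been treated as a warning rather than a feature. The ballot/cycle-lemma mechanism behind Kendall's identity needs cyclically exchangeable increments of the dual process on $[0,u+cz]$, i.e.\ it is tied to the point process of partial sums $\sum_{i\leqslant k}Y_i$ being rotation-invariant on the interval; that is exactly what the exponential-claims assumption buys in Borovkov and Dickson (2008), where the dual process is compound Poisson with drift (a genuine L\'evy process) and the identity in fact holds for each $n$ separately, as one can check directly for $n=1$. For general $Y$ the ``internal cancellation supplied by joint cyclic symmetry'' that you invoke is the unproved point, and nothing in your proposal produces it; the reduction ``the modification amounts to replacing $u$ by $u+cv$'' therefore skips the mathematical core of the statement. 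To repair the proposal you must either construct the dual skip-free process and justify the applicability of Kendall's identity to it in the present renewal setting (this is what the paper does, leaning on Borovkov and Dickson), or give an independent derivation of the aggregated identity --- which is the theorem itself, not a routine afterthought.
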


\begin{corollary}[Theorem 1 in \citeNP{[Borovkov Dickson 2008]}]\label{etyjhtrjrt}
For $Y$ exponential with parameter $\paramY$, we have
\begin{multline}\label{wedteyjtkj}
\P\{\timeR\leqslant t\}=\int_{0}^{t}e^{-\paramY(u+cs)}\left[f_{\T{1}}(s)
+\frac{1}{u+cs}\sum_{n=1}^{\infty}\frac{(\paramY(u+cs))^n}{n!}\right.
\\[0pt]
\left.\times\int_{0}^{s}(u+cv)f_{T}^{*n}(s-v)f_{\T{1}}(v)dv\right]ds.
\end{multline}
\end{corollary}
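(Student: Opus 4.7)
The plan is to specialize Theorem \ref{23456uy7} to exponential $Y$ and rearrange by Fubini.

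First, I would compute the two ingredients that depend on the jump distribution. For $Y$ exponential with rate $\paramY$ one has $\P\{u+cv-Y_{1}<0\}=\P\{Y_{1}>u+cv\}=e^{-\paramY(u+cv)}$. Moreover, since $M(s)=\inf\{k\geqslant 1:\sum_{i=1}^{k}Y_{i}>s\}-1$ counts how many i.i.d.\ exponential jumps of rate $\paramY$ fit into $[0,s]$, the random variable $M(u+cs)$ is Poisson with mean $\paramY(u+cs)$, so
\begin{equation*}
\P\{M(u+cs)=n\}=e^{-\paramY(u+cs)}\,\frac{(\paramY(u+cs))^n}{n!},\qquad n\geqslant 0.
\end{equation*}
Both substitutions share a common factor $e^{-\paramY(u+cs)}$, which will later be pulled outside the sum.

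Next I would insert these expressions into formula \eqref{3eryjkuy} of Theorem \ref{23456uy7}. The second (double) integral runs over $\{(v,s):0\leqslant v\leqslant t,\;v\leqslant s\leqslant t\}$; since the integrand is a product of probabilities and densities, Fubini's theorem applies and the region may be rewritten as $\{(v,s):0\leqslant s\leqslant t,\;0\leqslant v\leqslant s\}$. Swapping the order of integration and factoring $e^{-\paramY(u+cs)}/(u+cs)$ out of the $n$-sum yields
\begin{equation*}
\int_{0}^{t}e^{-\paramY(u+cs)}\,\frac{1}{u+cs}\sum_{n=1}^{\infty}\frac{(\paramY(u+cs))^n}{n!}\int_{0}^{s}(u+cv)f_{T}^{*n}(s-v)f_{\T{1}}(v)\,dv\,ds.
\end{equation*}

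Finally, relabeling the dummy variable $v\to s$ in the first summand of \eqref{3eryjkuy} turns it into $\int_{0}^{t}e^{-\paramY(u+cs)}f_{\T{1}}(s)\,ds$, and absorbing this term into the outer $ds$-integral of the expression just displayed produces exactly \eqref{wedteyjtkj}. The only nontrivial step is the Fubini interchange together with the careful renaming of variables so that both contributions are weighted by the common factor $e^{-\paramY(u+cs)}$; everything else is direct substitution, and no additional estimates are required since all quantities involved are nonnegative.
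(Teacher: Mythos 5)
Your proposal is correct and follows essentially the same route as the paper: substitute $\P\{Y_{1}>u+cv\}=e^{-\paramY(u+cv)}$ and the Poisson probabilities $\P\{M(u+cs)=n\}=e^{-\paramY(u+cs)}(\paramY(u+cs))^{n}/n!$ into \eqref{3eryjkuy} and rearrange. Your explicit mention of the Fubini interchange of the $(v,s)$-integration order is a detail the paper leaves implicit, but it is the same argument.
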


\begin{proof}[Proof of Theorem~\ref{23456uy7}]\label{sryehr}
The main idea of this proof is to change jumps direction from ``toward the
boundary'' to ``away from the boundary'' and then use Kendall's identity. We
set $\T{1}=v$ and write
\begin{equation}\label{edrtfyreh}
\timeR=(\sigma-u)/c,
\end{equation}
where $\sigma$ is the crossing time of the lower level $-(v+u/c)$ by the
process $Z(s)=\sum_{k\leqslant M(s)}T_k-s/c$ (here $M(s)=\inf\{k\geqslant
1:\sum_{i=1}^{k}Y_{i}>s\}-1$), which is a skip-free in the negative
direction\footnote{Recall that this means that this process has no negative
jumps and its increments are stationary independent.} L{\'e}vy process.

The Kendall's identity writes as (see \citeNP{[Borovkov Dickson 2008]}):
\begin{equation*}
p_{\sigma}(s)=\frac{v+u/c}{s}p_{Z(s)}(-(v+u/c)),
\end{equation*}
where
\begin{equation*}
p_{Z(s)}(-(v+u/c))=\sum_{n=1}^{\infty}\P\big\{M(s)=n\big\}f_{T}^{*n}(-(v+u/c)+s/c).
\end{equation*}

According to \eqref{edrtfyreh}, we have $p_{\timeR}(t\mid
v)=cp_{\sigma}(u+ct)$. We observe that $\timeR\geqslant \T{1}$ holds always and
write
\begin{multline*}
\P\{\timeR\leqslant t\}=\P\{\T{1}=\timeR\leqslant t\}+\P\{\T{1}<\timeR\leqslant
t\}
\\
=\int_{0}^{t}\P\{u+cv-Y_1<0\}f_{\T{1}}(v)dv+\int_{0}^{t}\P\{v<\timeR\leqslant
t\mid \T{1}=v\}f_{\T{1}}(v)dv
\\
=\int_{0}^{t}\P\{u+cv-Y_1<0\}f_{\T{1}}(v)dv+\int_{0}^{t}\left[\int_{v}^{t}p_{\timeR}(z\mid
v)dz\right]f_{\T{1}}(v)dv
\\
=\int_{0}^{t}\P\{u+cv-Y_1<0\}f_{\T{1}}(v)dv
+\int_{0}^{t}\left[\int_{v}^{t}cp_{\sigma}(u+cs)ds\right]f_{\T{1}}(v)dv
\end{multline*}
with
\begin{equation*}
cp_{\sigma}(u+cs)=\frac{cv+u}{u+cs}\sum_{n=1}^{\infty}\P\big\{M(u+cs)=n\big\}f_{T}^{*n}(s-v),
\end{equation*}
which is required.
\end{proof}

\begin{proof}[Proof of Corollary~\ref{etyjhtrjrt}]
For $\Y{i}\overset{d}{=}Y$, $i=1,2,\dots$, exponential with parameter
$\paramY$, we have
\begin{equation*}
\P\big\{M(u+cs)=n\big\}=e^{-\paramY(u+cs)}\frac{(\paramY(u+cs))^n}{n!},\quad
n=1,2,\dots,
\end{equation*}
$\P\{u+cv-\Y{1}<0\}=e^{-\paramY(u+cv)}$, and equation \eqref{3eryjkuy} turns
into \eqref{wedteyjtkj}, as required.
\end{proof}

\section{Proof of Theorem \ref{srdthjrf}}\label{sdfyguklyi}

In the sequel, let $K$, $K_{1}$, $K_{2}$, etc., be ``sufficiently large''
positive constants, and $\epsilon$, $\epsilon_{1}$, $\epsilon_{2}$, etc., be
``sufficiently small'' positive constants. All of them do not depend on
summation and integration variables, such as $n$, $y$, $z$, etc., and possibly
are different in different equations.

We put $y=z-v$ in \eqref{qwretgjhmg} and rewrite it as
\begin{align}
\P\{v<\timeR\leqslant t\mid\T{1}=v\}&=\int_{0}^{t-v}\dfrac{u+cv}{u+cv+cy}\,\,
\p_{\big\{\sum_{i=2}^{M(u+cv+cy)+1}\T{i}\big\}}(y)dy\notag
\\
&=\sum_{n=1}^{\infty}\int_{0}^{t-v}\frac{u+cv}{u+cv+cy}
\int_{0}^{u+cv+cy}\P\left\{\Y{n+1}>z\right\}\notag
\\[2pt]
&\hskip 90pt\times f_{Y}^{*n}(u+cv+cy-z)f_{T}^{*n}(y)dydz.\label{wertyrjhn}
\end{align}
Bearing in mind that $\T{i}$, $i=1,2,\dots$ and $\Y{i}$, $i=1,2,\dots$ are
mutually independent, the second equality in \eqref{wertyrjhn} holds true since
\begin{multline*}
\P\{M(u+cv+cy)=n\}=\P\bigg\{\sum_{i=1}^{n}\Y{i}\leqslant
u+cv+cy<\sum_{i=1}^{n+1}\Y{i}\bigg\}
\\
=\int_{0}^{u+cv+cy}f^{*n}_{Y}(u+cv+cy-z)\P\{\Y{n+1}>z\}dz.
\end{multline*}

The proof of Theorem \ref{srdthjrf} consists of several steps. The first and
the third steps are elimination of the terms that have little impact in
\eqref{wertyrjhn}; it may be called preparation of \eqref{wertyrjhn} for
further analysis. The former is elimination of those terms that correspond to
small $n$, i.e., to such $n$ that the event $\{M(u+cv+cy)=n\}$ has a small
probability, provided that $u+cv+cy$ is large. The latter is elimination of the
terms containing $z$, i.e., defect of the random walk $\sum_{i=1}^{n}Y_i$,
$n=1,2,\dots$, as it nearly crosses the high level $u+cv+cy$. In the first
step, we use the bounds for large deviations of sums of i.i.d. random
variables, like in \citeNP{[Nagaev 1965]}. In the third step, we apply the
Taylor formula to the normal p.d.f.

The second step yields the main term of approximation and the corresponding
remainder term in a raw form. That is made by means of applying standard
non-uniform Berry-Esseen bounds in local CLT formulated in
Section~\ref{srdthrj} to the product of $f^{*n}_{Y}$ and $f^{*n}_{T}$ in
\eqref{wertyrjhn}. The fourth step consists in investigation of the asymptotic
behavior of core components in the remainder terms which appear all over the
proof. The fifth step is the simplification of the main term of approximation,
up to the terms of required order of magnitude. It relies on a standard
estimation technique developed on the fourth step.

\subsection{Step 1: reducing of the area of summation}\label{sqrdstfghj}

Let us rewrite \eqref{wertyrjhn} as
\begin{equation*}
\begin{aligned}
\P\{v<\timeR\leqslant t\mid\T{1}=v\}& =\int_{0}^{t-v}\dfrac{u+cv}{u+cv+cy}\,\,
\p_{\big\{\sum_{i=2}^{M(u+cv+cy)+1}\T{i}\big\}}(y)dy
\\
&=\sum_{n=1}^{\infty}\int_{0}^{t-v}\dfrac{u+cv}{u+cv+cy}\,\,\P\{M(u+cv+cy)=n\}\,
\p_{\big\{\sum_{i=2}^{n+1}\T{i}\big\}}(y)dy,
\end{aligned}
\end{equation*}
select $\EnOne=\epsilon\,(u+cv)$, where $0<\epsilon<1$, split the sum
$\sum_{1}^{\infty}=\sum_{1}^{\EnOne}+\sum_{\EnOne}^{\infty}$ and show that the
first summand may be omitted within the required accuracy of approximation.

Note that $(u+cv)/(u+cv+cy)<1$ for $y>0$. Since
\begin{equation*}
\P\{M(u+cv+cy)=n\}=\P\bigg\{\sum_{i=1}^{n}\Y{i}\leqslant
u+cv+cy<\sum_{i=1}^{n+1}\Y{i}\bigg\}\leqslant\P\bigg\{\sum_{i=1}^{n+1}\Y{i}>u+cv+cy\bigg\},
\end{equation*}
we have
\begin{equation*}
\begin{aligned}
&\sum_{n=1}^{\EnOne}\int_{0}^{t-v}\P\{M(u+cv+cy)=n\}
\p_{\big\{\sum_{i=2}^{n+1}\T{i}\big\}}(y)dy
\\
&\leqslant\sum_{n=1}^{\EnOne}\int_{0}^{t-v}\P\bigg\{\sum_{i=1}^{n+1}\Y{i}>u+cv+cy\bigg\}
\p_{\big\{\sum_{i=2}^{n+1}\T{i}\big\}}(y)dy
\\
&\leqslant\sum_{n=1}^{\EnOne}\P\bigg\{Y_1+\sum_{i=2}^{n+1}\X{i}>u+cv\bigg\}.
\end{aligned}
\end{equation*}

For standardized i.i.d. random variables $\tilde{X}_i=(X_i-\E{X})/\sqrt{\D{X}}$, $i=1,2,\dots$, we have
\begin{equation*}
\P\bigg\{\sum_{i=2}^{n+1}\X{i}>u+cv\bigg\}
=\P\bigg\{\sum_{i=2}^{n+1}\tilde{\X{i}}>\frac{u+cv-n\E{X}}{\sqrt{\D{X}}}\bigg\}.
\end{equation*}
It is easily seen that the inequality
\begin{equation*}
\frac{u+cv-n\E{X}}{\sqrt{\D{X}}}>K\sqrt{n\ln(n)}
\end{equation*}
holds true for all $n\leqslant\EnOne$, and by Lemma~\ref{q1werhewef} we have
\begin{equation*}
\P\bigg\{\sum_{i=2}^{n+1}\tilde{\X{i}}>\frac{u+cv-n\E{X}}{\sqrt{\D{X}}}\bigg\}
\leqslant K\frac{n}{(u+cv-n)^3}.
\end{equation*}
Using simple estimates\footnote{Use, e.g., the inequality
$\P\{\xi_1+\xi_2>x\}\leqslant\P\{\xi_1>x/2\}+\P\{\xi_2>x/2\}$.}, we have
\begin{multline*}
\sum_{n=1}^{\EnOne}\P\bigg\{Y_1+\sum_{i=2}^{n+1}\X{i}>u+cv\bigg\}\leqslant
K\,\sum_{n=1}^{\EnOne}\frac{n}{(u+cv-n)^3}
=\frac{K}{(u+cv)^3}\,\sum_{n=1}^{\EnOne}\frac{n}{(1-\frac{n}{u+cv})^3}
\\
\leqslant\frac{K_1}{(u+cv)^3}\,\sum_{n=1}^{\EnOne}n=\frac{K_1}{(u+cv)^3}\,\frac{\EnOne(\EnOne+1)}{2}
=\underline{O}((u+cv)^{-1}),
\end{multline*}
as $u+cv\to\infty$, as required.

\subsection{Step 2: application of CLT}\label{qweryjky}

For i.i.d. random vectors $\xi_i=(\tilde{Y}_i,\tilde{T}_i)\in\R^{2}$ with
standardized independent components $\tilde{Y}_i=(Y_i-\E{Y})/\sqrt{\D{Y}}$ and
$\tilde{T}_i=(T_i-\E{T})/\sqrt{\D{T}}$, $i=1,2,\dots$, let us apply
Theorem~\ref{w4e5y46yu43}. Bearing in mind that
\begin{equation*}
f_{T}^{*n}(x)=\frac{1}{\sqrt{n\D{T}}}\,p_{n^{-1/2}\sum_{i=1}^{n}\tilde{T}_i}
\left(\tfrac{x-n\E{T}}{\sqrt{n\D{T}}}\right),\quad
f_{Y}^{*n}(x)=\frac{1}{\sqrt{n\D{Y}}}\,p_{n^{-1/2}\sum_{i=1}^{n}\tilde{Y}_i}
\left(\tfrac{x-n\E{Y}}{\sqrt{n\D{Y}}}\right),
\end{equation*}
we have from
Theorem~\ref{w4e5y46yu43}
\begin{equation}\label{werkyhjrtjr}
\Big|\,\P\{v<\timeR\leqslant
t\mid\T{1}=v\}-\MainApprox{t}(u,c\mid\T{1}=v)\Big|\leqslant\MainRemTerm{t}(u,c\mid\T{1}=v),
\end{equation}
where $c,u>0$, $0<v<t$ and
\begin{equation*}
\begin{aligned}
\MainApprox{t}(u,c\mid\T{1}=v)&=\sum_{n=\EnOne}^{\infty}\int_{0}^{t-v}\frac{u+cv}{u+cv+cy}
\int_{0}^{u+cv+cy}\P\left\{\Y{n+1}>z\right\}
\\
&\hskip
-40pt\times\frac{1}{\sqrt{n\D{Y}}}\,\Ugauss{0}{1}\left(\tfrac{u+cv+cy-z-n\E{Y}}{\sqrt{n\D{Y}}}\right)
\frac{1}{\sqrt{n\D{T}}}\,\Ugauss{0}{1}\left(\tfrac{y-n\E{T}}{\sqrt{n\D{T}}}\right)\,dydz,
\end{aligned}
\end{equation*}
\begin{equation*}
\begin{aligned}
\MainRemTerm{t}(u,c\mid\T{1}=v)&=K\sum_{n=\EnOne}^{\infty}\int_{0}^{t-v}\frac{u+cv}{u+cv+cy}
\int_{0}^{u+cv+cy}\P\left\{\Y{n+1}>z\right\}
\\
&\hskip
-40pt\times\frac{1}{n^{3/2}}\bigg(1+\bigg[\bigg(\frac{u+cv+cy-z-n\E{Y}}{\sqrt{n\D{Y}}}\bigg)^2
+\bigg(\frac{y-n\E{T}}{\sqrt{n\D{T}}}\bigg)^2\bigg]^{1/2}\bigg)^{-3}\,dydz.
\end{aligned}
\end{equation*}
\smallskip

\begin{remark}\label{wqeyhjr}
To get the approximation \eqref{werkyhjrtjr}, to the product
$f_{Y}^{*n}(u+cv+cy-z)f_{T}^{*n}(y)$ we applied Theorem~\ref{w4e5y46yu43},
which is the Berry-Esseen bounds in two-di\-men\-si\-onal local CLT. Instead,
we could apply the Berry-Esseen bounds in one-dimensional local CLT to each of
these factors, separately. We preferred to use Theorem~\ref{w4e5y46yu43} to get
the remainder term $\MainRemTerm{t}(u,c\mid\T{1}=v)$ in a form better suited
for the further analysis.
\end{remark}

\subsection{Step 3: bringing the approximation to a convenient form}\label{wertwserewye}

We do this in several steps. Major objective is simplification of the main term
$\MainApprox{t}(u,c\mid\T{1}=v)$ and verification that the remainder term
$\MainRemTerm{t}(u,c\mid\T{1}=v)$ is of required order of smallness.

\subsection*{Change of variables}\label{dgfjryuhjrt}

Put $x=c\,y/(u+cv)$, $dx=c\,dy/(u+cv)$. For
\begin{equation*}
\mathcal{Y}_{n,z}(u+cv,x)=\dfrac{(u+cv)(1+x)-z-n\E{Y}}{\sqrt{n\D{Y}}},\hskip
6pt\mathcal{T}_n(u+cv,x)=\dfrac{(u+cv)x/c-n\E{T}}{\sqrt{n\D{T}}}
\end{equation*}
we have\footnote{We bear in mind that $Y_{n+1}\overset{d}{=}Y$ and that
$cy=(u+cv)x$, $cdy=(u+cv)dx$.}
\begin{multline}\label{wertyhjty}
\MainApprox{t}(u,c\mid\T{1}=v)=\frac{u+cv}{c\sqrt{\D{T}\D{Y}}}
\,\sum_{n=\EnOne}^{\infty}n^{-1}\,\int_{0}^{\frac{c(t-v)}{u+cv}}
\frac{1}{1+x}\int_{0}^{(u+cv)(1+x)}\P\left\{Y>z\right\}
\\
\times\Ugauss{0}{1}\big(\mathcal{Y}_{n,z}(u+cv,x)\big)
\,\Ugauss{0}{1}\big(\mathcal{T}_n(u+cv,x)\big)\,dxdz
\end{multline}
and
\begin{multline}\label{asdfghd}
\MainRemTerm{t}(u,c\mid\T{1}=v)=K(u+cv)\sum_{n=\EnOne}^{\infty}n^{-3/2}
\int_{0}^{\frac{c(t-v)}{u+cv}}\frac{1}{1+x}
\int_{0}^{(u+cv)(1+x)}\P\left\{Y>z\right\}\,
\\
\times\big(1+\big[\mathcal{Y}^2_{n,z}(u+cv,x)
+\mathcal{T}^2_n(u+cv,x)\big]^{1/2}\big)^{-3}\,dxdz.
\end{multline}

\subsection*{Use of fundamental identities of Section~\ref{werwhteyjn}}\label{wertrjtyh}

Denoting
\begin{equation*}
B_1=(\E{T})^2\D{Y}+(\E{Y})^2\D{T},\ B_2=\E{Y}\D{T},\ B_3=\E{T}\D{Y},\
B_4=\D{Y}\D{T},
\end{equation*}
we set
\begin{equation}\label{qwertyujh}
\begin{gathered}
\Delta_{n,z}(u+cv,x)=\dfrac{(u+cv)(x/c)\E{Y}-[(u+cv)(1+x)-z]\E{T}}{\sqrt{B_1n}},
\\
\Lambda_{n,z}(u+cv,x)
=\dfrac{B_1n-\big(B_2[(u+cv)(1+x)-z]+B_3(u+cv)x/c\big)}{\sqrt{B_1B_4n}}.
\end{gathered}
\end{equation}

By Lemma~\ref{erwtjytk}, we have the identity
\begin{equation*}
\mathcal{Y}^2_{n,z}(u+cv,x)+\mathcal{T}^2_n(u+cv,x)=\Lambda_{n,z}^2(u+cv,x)+\Delta^2_{n,z}(u+cv,x),
\end{equation*}
and equation \eqref{wertyhjty} rewrites as
\begin{multline}\label{dtfyjutykm}
\MainApprox{t}(u,c\mid\T{1}=v)=\frac{u+cv}{2\pi
c\sqrt{\D{T}\D{Y}}}\sum_{n=\EnOne}^{\infty}n^{-1}\int_{0}^{\frac{c(t-v)}{u+cv}}
\frac{1}{1+x}\int_{0}^{(u+cv)(1+x)}\P\left\{Y>z\right\}
\\
\times\exp\big\{-\tfrac{1}{2}\big[\Lambda_{n,z}^2(u+cv,x)+\Delta^2_{n,z}(u+cv,x)\big]\big\}dxdz.
\end{multline}
Equation \eqref{asdfghd} rewrites as
\begin{multline}\label{ertgerh}
\MainRemTerm{t}(u,c\mid\T{1}=v)=K(u+cv)\sum_{n=\EnOne}^{\infty}n^{-3/2}
\int_{0}^{\frac{c(t-v)}{u+cv}}\frac{1}{1+x}
\int_{0}^{(u+cv)(1+x)}\P\left\{Y>z\right\}\,
\\
\times\big(1+\big[\Lambda_{n,z}^2(u+cv,x)+\Delta^2_{n,z}(u+cv,x)\big]^{1/2}\big)^{-3}dxdz.
\end{multline}

\subsection*{Elimination of terms with $z$ in (\ref{dtfyjutykm})}\label{wergrhetyjntr}

Written in terms of elementary functions and considered as functions of $z$,
the expressions \eqref{dtfyjutykm} and \eqref{ertgerh} are liable to such
standard calculus manipulations as, e.g., use of Taylor's formula.

Let us write
\begin{multline}\label{ewdtehfdj}
\SeqApprox{t}{1}(u,c\mid\T{1}=v)=\frac{\E{Y}(u+cv)}{2\pi
c\sqrt{\D{T}\D{Y}}}\sum_{n=\EnOne}^{\infty}n^{-1}\int_{0}^{\frac{c(t-v)}{u+cv}}
\frac{1}{1+x}
\\
\times\exp\big\{-\tfrac{1}{2}\big[\Lambda^2_{n}(u+cv,x)+\Delta_{n}^2(u+cv,x)\big]\big\}dx,
\end{multline}
where (see \eqref{qwertyujh})
\begin{equation}\label{wq3erewg}
\Lambda_{n}(u+cv,x)=\Lambda_{n,0}(u+cv,x),\quad
\Delta_{n}(u+cv,x)=\Delta_{n,0}(u+cv,x).
\end{equation}
We need to show that
\begin{equation*}
\sup_{t>0}\,\Big|\,\MainApprox{t}(u,c\mid\T{1}=v)-\SeqApprox{t}{1}(u,c\mid\T{1}=v)\,\Big|
=\underline{O}\bigg(\frac{\ln(u+cv)}{u+cv}\bigg),
\end{equation*}
as $u+cv\to\infty$.

Writing
$f(z)=\exp\big\{-\tfrac{1}{2}\big[\Delta^2_{n,z}(u+cv,x)+\Lambda_{n,z}^2(u+cv,x)\big]\big\}>0$
and bearing in mind that $\int_{0}^{\infty}\P\left\{Y>z\right\}dz=\E{Y}$,
we divide the region of integration with respect to $z$ in \eqref{dtfyjutykm}
in two parts, $[0,\Uex]$ and $[\Uex,(u+cv)(1+x)]$, where
$\Uex=\epsilon(u+cv)(1+x)$, $0<\epsilon<1$.

Bearing in mind that for $z\in[\Uex,(u+cv)(1+x)]$
\begin{multline*}
\sup_{z>0}\exp\big\{-\tfrac{1}{2}\big[\Lambda_{n,z}^2(u+cv,x)+\Delta^2_{n,z}(u+cv,x)\big]\big\}
=\exp\big\{-\tfrac{1}{2}\mathcal{T}^2_n(u+cv,x)\big\}
\\[4pt]
\times\sup_{z>0}\exp\big\{-\tfrac{1}{2}\mathcal{Y}^2_{n,z}(u+cv,x)\big\}
\leqslant K\exp\big\{-\tfrac{1}{2}\mathcal{T}^2_n(u+cv,x)\big\}
\end{multline*}
and using Chebyshev's inequality $\P\left\{Y>z\right\}\leqslant\E{Y^3}/z^3$ which yields
\begin{equation*}
\int_{\Uex}^{(u+cv)(1+x)}\P\left\{Y>z\right\}dz\leqslant\E{Y^3}\int_{\Uex}^{(u+cv)(1+x)}\frac{dz}{z^3}
\leqslant\frac{K}{(u+cv)^2(1+x)^2},
\end{equation*}
we have
\begin{multline*}
\frac{u+cv}{2\pi c\sqrt{\D{T}\D{Y}}}\sum_{n=\EnOne}^{\infty}n^{-1}\int_{0}^{\frac{c(t-v)}{u+cv}}
\frac{1}{1+x}\int_{\Uex}^{(u+cv)(1+x)}\P\left\{Y>z\right\}
\\[4pt]
\times\exp\big\{-\tfrac{1}{2}\big[\Lambda_{n,z}^2(u+cv,x)+\Delta^2_{n,z}(u+cv,x)\big]\big\}dxdz
\\[4pt]
\leqslant\frac{K}{u+cv}\int_{0}^{\frac{c(t-v)}{u+cv}}
\frac{1}{(1+x)^3}\sum_{n=\EnOne}^{\infty}n^{-1}\exp\big\{-\tfrac{1}{2}\mathcal{T}^2_n(u+cv,x)\big\}dx
=\underline{O}((u+cv)^{-1}),
\end{multline*}
as $u+cv\to\infty$, which is checked by easy calculus.

For $z\in[0,\Uex]$, bearing in mind that
$\int_{0}^{\Uex}z\P\left\{Y>z\right\}dz\leqslant\E{Y^2}/2<\infty$ and using
Taylor's formula
\begin{equation*}
|f(z)-f(0)|\leqslant z\sup_{\xi\in[0,\Uex]}|f^{\prime}(\xi)|,
\end{equation*}
where\footnote{For brevity, here and in the sequel, we omit the arguments of
$\Delta_{n,z}(u+cv,x)$ and $\Lambda_{n,z}(u+cv,x)$ whenever it does not create
confusion.}
$f^{\prime}(z)=f(z)(\D{Y}B_1n)^{-1/2}(\E{T}{\sqrt{\D{Y}}}\Delta_{n,z}
+\E{Y}\sqrt{\D{T}}\Lambda_{n,z})$, we have to check that
\begin{multline*}
\frac{u+cv}{2\pi
c\sqrt{\D{T}\D{Y}}}\sum_{n=\EnOne}^{\infty}n^{-1}(\D{Y}B_1n)^{-1/2}\int_{0}^{\frac{c(t-v)}{u+cv}}
\frac{1}{1+x}\int_{0}^{\Uex}z\P\left\{Y>z\right\}dz
\\
\times\sup_{\xi\in[0,\Uex]}\big(f(\xi)\,\big|\big(\E{T}{\sqrt{\D{Y}}}\Delta_{n,\xi}
+\E{Y}\sqrt{\D{T}}\Lambda_{n,\xi}\big)\big|\big)dx=\underline{O}\bigg(\frac{\ln(u+cv)}{u+cv}\bigg),
\end{multline*}
as $u+cv\to\infty$. It follows from
\begin{multline}\label{drgterhr}
(u+cv)\sum_{n=\EnOne}^{\infty}n^{-3/2}\int_{0}^{\frac{c(t-v)}{u+cv}}
\frac{1}{1+x}\sup_{\xi\in[0,\Uex]}\big(f(\xi)\;\big|\Lambda_{n,\xi}\big|\big)\,dx\leqslant K(u+cv)\sum_{n=\EnOne}^{\infty}n^{-3/2}
\\
\times\int_{0}^{\frac{c(t-v)}{u+cv}} \frac{1}{1+x}|\Lambda_{n}|
\exp\big\{-\tfrac{1}{2}\big[\Lambda_{n}^2+\Delta^2_{n}\big]\big\}\,dx
=\underline{O}\bigg(\frac{\ln(u+cv)}{u+cv}\bigg)
\end{multline}
and
\begin{multline}\label{wertgr3eyh}
(u+cv)\sum_{n=\EnOne}^{\infty}n^{-3/2}\int_{0}^{\frac{c(t-v)}{u+cv}}
\frac{1}{1+x}\sup_{\xi\in[0,\Uex]}\big(f(\xi)\;\big|\Delta_{n,\xi}\big|\big)\,dx
\leqslant K(u+cv)\sum_{n=\EnOne}^{\infty}n^{-3/2}
\\
\times\int_{0}^{\frac{c(t-v)}{u+cv}} \frac{1}{1+x}|\Delta_{n}|
\exp\big\{-\tfrac{1}{2}\big[\Lambda_{n}^2+\Delta^2_{n}\big]\big\}\,dx
=\underline{O}\bigg(\frac{\ln(u+cv)}{u+cv}\bigg),
\end{multline}
as $u+cv\to\infty$. The proof of \eqref{drgterhr} and \eqref{wertgr3eyh} by
means of core asymptotic analysis of the expressions of the second kind is
deferred until Step~4.

\subsection*{Elimination of terms with $z$ in (\ref{asdfghd})}\label{qwertyhrjty}

Similarly to what just has been done, for
\begin{multline}\label{q345t43y346y}
\SeqRemTerm{t}{1}(u,c\mid\T{1}=v)=K(u+cv)\sum_{n=\EnOne}^{\infty}n^{-3/2}
\int_{0}^{\frac{c(t-v)}{u+cv}}\frac{1}{1+x}
\\
\times\big(1+\big[\Lambda^2_{n}(u+cv,x)+\Delta_{n}^2(u+cv,x)\big]^{1/2}\big)^{-3}dx
\end{multline}
we need to show that
\begin{equation}\label{edfghjty}
\sup_{t>0}\,\Big|\,\MainRemTerm{t}(u,c\mid\T{1}=v)-\SeqRemTerm{t}{1}(u,c\mid\T{1}=v)\,\Big|
=\underline{O}\bigg(\frac{\ln(u+cv)}{u+cv}\bigg),
\end{equation}
as $u+cv\to\infty$.

We divide as above the region of integration with respect to $z$ in
\eqref{ertgerh} in two parts, $[0,\Uex]$ and $[\Uex,(u+cv)(1+x)]$, where
$\Uex=\epsilon(u+cv)(1+x)$, $0<\epsilon<1$. On the latter, we use Chebyshev's
inequality $\P\left\{Y>z\right\}\leqslant\E{Y^3}/z^3$. On the former, bearing
in mind that $\int_{0}^{\Uex}z\P\left\{Y>z\right\}dz\leqslant\E{Y^2}/2$, we put
$g(z)=\big(1+\big[\Delta^2_{n,z}(u+cv,x)+\Lambda_{n,z}^2(u+cv,x)\big]^{1/2}\big)^{-3}$
and use Taylor's formula
\begin{equation*}
\mid g(z)-g(0)\mid\,\leqslant z\sup_{\xi\in[0,\Uex]}|g^{\prime}(\xi)|,
\end{equation*}
where $g^{\prime}(z)=-\frac{3}{\sqrt{B_1n}}\big(\E{T}\Delta_{n,z}
+\frac{B_2}{\sqrt{B_4}}\Lambda_{n,z}\big)(\Delta^2_{n,z}+\Lambda^2_{n,z})^{-1/2}g^{4/3}(z)$.
The proof reduces to checking that for all $t>0$
\begin{multline}\label{asergtshd}
(u+cv)\sum_{n=\EnOne}^{\infty}n^{-3/2}
\int_{0}^{\frac{c(t-v)}{u+cv}}\frac{1}{1+x}\int_{\Uex}^{(u+cv)(1+x)}z^{-3}dz\,
\big(1+\big[\mathcal{T}^2_n(u+cv,x)\big]^{1/2}\big)^{-3}dx
\\
=\underline{O}((u+cv)^{-1})
\end{multline}
and
\begin{multline}\label{asergthn}
(u+cv)\sum_{n=\EnOne}^{\infty}n^{-2}
\int_{0}^{\frac{c(t-v)}{u+cv}}\frac{1}{1+x}\big(K_1|\Lambda_{n}(u+cv,x)|+K_2|\Delta_{n}(u+cv,x)|\big)
\\
\times\big(1+\big[\Lambda^2_{n}(u+cv,x)+\Delta_{n}^2(u+cv,x)\big]^{1/2}\big)^{-3}dx
=\underline{O}\bigg(\frac{\ln(u+cv)}{u+cv}\bigg),
\end{multline}
as $u+cv\to\infty$. The proof of \eqref{asergtshd} and \eqref{asergthn} by
means of core asymptotic analysis of the expressions of the first kind, is
deferred until Step~4.

\subsection*{Estimation of (\ref{q345t43y346y})}\label{456yu4e3424}

We need to show that
\begin{equation}\label{e5y34y56uy4}
\sup_{t>0}\,\SeqRemTerm{t}{1}(u,c\mid\T{1}=v)=\underline{O}\bigg(\frac{\ln(u+cv)}{u+cv}\bigg),
\end{equation}
as $u+cv\to\infty$. The proof of \eqref{e5y34y56uy4} with
$\SeqRemTerm{t}{1}(u,c\mid\T{1}=v)$ written down in \eqref{q345t43y346y},
carried out by means of core asymptotic analysis of the expressions of the
first kind, is deferred until Step~4.

\subsection{Step 4: core asymptotic analysis}\label{sdfdeghr}

Before we formulate and prove the main results of this section, we examine in
more detail $\Lambda_{n}(u+cv,x)$ and $\Delta_{n}(u+cv,x)$ defined in
\eqref{wq3erewg}. From the definition, it follows straightforwardly that
\begin{align}
\Lambda_{n}(u+cv,x)&=\dfrac{B_1n-\big(B_2(u+cv)(1+x)+B_3(u+cv)x/c\big)}{\sqrt{B_1B_4n}}\notag
\\
&=\dfrac{B_1n+B_3(u+cv)/c-(1+x)(u+cv)(B_2+B_3/c)}{\sqrt{B_1B_4n}},\label{weqrtehje}
\\
\Delta_{n}(u+cv,x)&=\dfrac{((u+cv)x/c)\E{Y}-(u+cv)(1+x)\E{T}}{\sqrt{B_1n}}\notag
\\
&=\dfrac{(u+cv)(1+x)(\E{Y}/c-\E{T})-(u+cv)\E{Y}/c}{\sqrt{B_1n}},\label{qwerthrj}
\end{align}
and that for\footnote{Since we are concerned with uniform bounds, we are ready
to stretch out the range $0<x<\frac{c(t-v)}{u+cv}$ in \eqref{dtfyjutykm} and
\eqref{ertgerh} up to $0<x<\infty$.} $0<x<\infty$
\begin{equation*}
-\infty<\Lambda_{n}(u+cv,x)\leqslant\frac{B_1n-(u+cv)B_2}{\sqrt{B_1B_4n}}
=\frac{\sqrt{B_1}}{\sqrt{B_4}}\Big(\sqrt{n}-\frac{B_2}{B_1}\frac{(u+cv)}{\sqrt{n}}\Big).
\end{equation*}

In Lemmas~\ref{erwtjytk}--\ref{asdfhjnmkgh}, we proved a
number of identities for $\Delta_{n}(\OneVar,\TwoVar)$,
$\Lambda_{n}(\OneVar,\TwoVar)$ defined in \eqref{wqerertjr4}. In particular, they hold for
$\Delta_{n}(u+cv,x)=\Delta_{n}(\OneVar,\TwoVar)\mid_{\OneVar=(u+cv)(1+x),
\TwoVar=(u+cv)x/c}$ and
$\Lambda_{n}(u+cv,x)=\Lambda_{n}(\OneVar,\TwoVar)\mid_{\OneVar=(u+cv)(1+x),
\TwoVar=(u+cv)x/c}$. Let us establish two more identities for
$\Delta_{n}(u+cv,x)$ and $\Lambda_{n}(u+cv,x)$.

\begin{lemma}\label{ewrgfwdgsgw}
The following identities hold true:
\begin{multline}\label{wertg53hy654}
\Delta_{n}(u+cv,x)=\frac{\sqrt{B_4}(\E{Y}/c-\E{T})}{(B_2+B_3/c)}
\bigg[-\Lambda_{n}(u+cv,x)
\\
+\frac{\sqrt{B_1}}{\sqrt{B_4}}\bigg(\sqrt{n}+\frac{B_3}{B_1c}\frac{(u+cv)}{\sqrt{n}}\bigg)\bigg]
-\frac{\E{Y}}{c\sqrt{B_1}}\frac{(u+cv)}{\sqrt{n}}
\end{multline}
and
\begin{multline}\label{ety45tuj65}
1+x=\frac{\E{Y}\sqrt{B_1B_4}}{(B_1+B_3(\E{Y}/c-\E{T}))}\frac{\sqrt{n}}{(u+cv)}\bigg[-\Lambda_{n}(u+cv,x)
\\
+\frac{\sqrt{B_1}}{\sqrt{B_4}}\bigg(\sqrt{n}+\frac{B_3}{B_1c}\frac{(u+cv)}{\sqrt{n}}\bigg)\bigg].
\end{multline}
\end{lemma}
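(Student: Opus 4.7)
The plan is purely algebraic: derive \eqref{ety45tuj65} by solving \eqref{weqrtehje} for $1+x$, and then substitute into \eqref{qwerthrj} to obtain \eqref{wertg53hy654}. Both steps hinge on one elementary auxiliary identity between the constants $B_1,B_2,B_3$.

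\textbf{Step 1 (the auxiliary identity).} Using $B_1=(\E T)^2\D Y+(\E Y)^2\D T$, $B_2=\E Y\D T$ and $B_3=\E T\D Y$, I first verify
\begin{equation*}
B_1+B_3\bigl(\E Y/c-\E T\bigr)=\E Y\bigl(B_2+B_3/c\bigr),
\end{equation*}
which after expansion reduces to the trivial cancellation $(\E T)^2\D Y-(\E T)^2\D Y=0$.

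\textbf{Step 2 (derivation of \eqref{ety45tuj65}).} Starting from \eqref{weqrtehje}, I solve for $1+x$:
\begin{equation*}
(1+x)(u+cv)(B_2+B_3/c)=B_1n+B_3(u+cv)/c-\sqrt{B_1B_4n}\,\Lambda_n(u+cv,x).
\end{equation*}
Dividing through by $(u+cv)(B_2+B_3/c)$ and pulling out the common factor $\sqrt{B_1B_4n}/(u+cv)$ on the right, together with the obvious rewriting
\begin{equation*}
B_1n=\sqrt{B_1B_4n}\,\tfrac{\sqrt{B_1}}{\sqrt{B_4}}\sqrt n,\qquad B_3(u+cv)/c=\sqrt{B_1B_4n}\,\tfrac{\sqrt{B_1}}{\sqrt{B_4}}\,\tfrac{B_3}{B_1c}\,\tfrac{u+cv}{\sqrt n},
\end{equation*}
produces an expression of exactly the bracketed form appearing in \eqref{ety45tuj65}, with the prefactor $\sqrt{B_1B_4}/(B_2+B_3/c)\cdot\sqrt n/(u+cv)$. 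The auxiliary identity of Step~1 lets me replace $1/(B_2+B_3/c)$ by $\E Y/\bigl(B_1+B_3(\E Y/c-\E T)\bigr)$, which gives \eqref{ety45tuj65} exactly as stated.

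\textbf{Step 3 (derivation of \eqref{wertg53hy654}).} I substitute the expression for $(u+cv)(1+x)$ obtained from \eqref{ety45tuj65} into the definition \eqref{qwerthrj} of $\Delta_n(u+cv,x)$. The first piece $(u+cv)(1+x)(\E Y/c-\E T)/\sqrt{B_1n}$ acquires the prefactor
\begin{equation*}
\frac{\E Y(\E Y/c-\E T)\sqrt{B_1B_4}}{B_1+B_3(\E Y/c-\E T)}\cdot\frac{1}{\sqrt{B_1}}=\frac{(\E Y/c-\E T)\sqrt{B_4}}{B_2+B_3/c},
\end{equation*}
where I used the auxiliary identity once more to cancel $\E Y$ against $B_1+B_3(\E Y/c-\E T)$. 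Multiplying this prefactor by the square bracket from \eqref{ety45tuj65} gives precisely the first summand of \eqref{wertg53hy654}, while the remaining term $-(u+cv)\E Y/(c\sqrt{B_1n})$ in \eqref{qwerthrj} is exactly $-\E Y(u+cv)/(c\sqrt{B_1}\sqrt n)$, matching the second summand of \eqref{wertg53hy654}.

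No step is really an obstacle; the only thing that has to be spotted is the auxiliary identity of Step~1, which is the algebraic reason why the prefactor $(B_2+B_3/c)^{-1}$ naturally rewrites in the $\Lambda_n$-friendly form used throughout the later asymptotic analysis. Everything else is bookkeeping of $\sqrt n$ and $(u+cv)/\sqrt n$ factors.
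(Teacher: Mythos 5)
Your proposal is correct and follows essentially the same route as the paper: both rest on the two expressions for $1+x$ coming from \eqref{qwerthrj} and \eqref{weqrtehje}, together with the key identity $B_1=\E{Y}B_2+\E{T}B_3$ (equivalently $B_2+B_3/c=(B_1+B_3(\E{Y}/c-\E{T}))/\E{Y}$). The only difference is bookkeeping order — you derive \eqref{ety45tuj65} first and substitute it into \eqref{qwerthrj}, while the paper equates the two expressions for $1+x$ directly — which is the same algebra rearranged.
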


\begin{proof}
From \eqref{qwerthrj} and \eqref{weqrtehje}, we have two expressions for
$(1+x)$:
\begin{equation}\label{wertgrehrt}
\begin{aligned}
1+x&=\frac{\sqrt{B_1n}\Delta_{n}(u+cv,x)+(u+cv)\E{Y}/c}{(u+cv)(\E{Y}/c-\E{T})},
\\[4pt]
1+x&=\frac{B_1n+B_3(u+cv)/c-\sqrt{B_1B_4n}\Lambda_{n}(u+cv,x)}{(u+cv)(B_2+B_3/c)}.
\end{aligned}
\end{equation}
To get \eqref{wertg53hy654}, we equate the right-hand sides of both equations
\eqref{wertgrehrt} and do straightforward algebraic transformations. To have
\eqref{ety45tuj65}, we transform the right-hand side of the second equation
\eqref{wertgrehrt}, bearing in mind that $B_1=\E{Y}B_2+\E{T}B_3$ and
consequently that $B_2+B_3/c=(B_1+B_3(\E{Y}/c-\E{T}))/\E{Y}$.
\end{proof}

\subsection*{Asymptotic analysis of the expressions of the first kind}\label{sdfgjhrghm}

By the expressions of the first kind we call those arising in the analysis of
the remainder term in the approximation \eqref{werkyhjrtjr}. Their integrands
contain rational functions modified by a square root. The first expression of
this type (cf. \eqref{q345t43y346y} and \eqref{e5y34y56uy4}) is
\begin{equation*}
\IntOnE=(u+cv)\sum_{n=\EnOne}^{\infty}n^{-3/2}\int_{0}^{\infty}\frac{1}{1+x}
\big(1+\big[\Lambda^2_{n}(u+cv,x)+\Delta_{n}^2(u+cv,x)\big]^{1/2}\big)^{-3}dx.
\end{equation*}
Other expressions of this type are (cf. \eqref{asergthn})
\begin{multline*}
\IntOne{1}=(u+cv)\sum_{n=\EnOne}^{\infty}n^{-2}\int_{0}^{\infty}\frac{1}{1+x}|\Lambda_{n}(u+cv,x)|
(1+\big[\Lambda^2_{n}(u+cv,x)
\\
+\Delta_{n}^2(u+cv,x)\big]^{1/2}\big)^{-3}dx
\end{multline*}
and
\begin{multline*}
\IntOne{2}=(u+cv)\sum_{n=\EnOne}^{\infty}n^{-2}\int_{0}^{\infty}\frac{1}{1+x}|\Delta_{n}(u+cv,x)|
(1+\big[\Lambda^2_{n}(u+cv,x)
\\
+\Delta_{n}^2(u+cv,x)\big]^{1/2})^{-3}dx.
\end{multline*}

\subsection*{\textit{Processing of}\hskip 6pt $\IntOnE$}\label{ertyhetjr}

Applying both identities of Lemma~\ref{ewrgfwdgsgw}, we rewrite it as
\begin{equation*}
\begin{aligned}
\IntOnE&=\frac{(B_1+B_3(\E{Y}/c-\E{T}))}{\E{Y}\sqrt{B_1B_4}}(u+cv)^2
\sum_{n=\EnOne}^{\infty}n^{-1}\int_{0}^{\infty}\bigg\{-\Lambda_{n}(u+cv,x)
\\[4pt]
&+\frac{\sqrt{B_1}}{\sqrt{B_4}}\bigg(\sqrt{n}
+\frac{B_3}{B_1c}\frac{(u+cv)}{\sqrt{n}}\bigg)\bigg\}^{-1}
\Bigg(1+\bigg[\Lambda^2_{n}(u+cv,x)
+\bigg\{\frac{\sqrt{B_4}(\E{Y}/c-\E{T})}{B_2+B_3/c}
\\[4pt]
&\times\bigg[-\Lambda_{n}(u+cv,x)+\frac{\sqrt{B_1}}{\sqrt{B_4}}\bigg(\sqrt{n}
+\frac{B_3}{B_1c}\frac{(u+cv)}{\sqrt{n}}\bigg)\bigg]
-\frac{\E{Y}}{c\sqrt{B_1}}\frac{(u+cv)}{\sqrt{n}}\bigg\}^2\bigg]^{1/2}\Bigg)^{-3}dx.
\end{aligned}
\end{equation*}

Making the change of variables $\xi=-\Lambda_{n}(u+cv,x)$ in the integral with
respect to $x$ and bearing in mind that
\begin{equation*}
dx=\frac{\sqrt{B_1B_4n}}{(u+cv)(B_2+B_3/c)}\,d\xi,
\end{equation*}
we get
\begin{multline}\label{sdfghjfm}
\IntOnE=(u+cv)\sum_{n=\EnOne}^{\infty}n^{-3/2}\int_{L_{u+cv,n}}^{\infty}\big(\xi+R_{u+cv,n}\big)^{-1}
\\[-2pt]
\times\big(1+\big[\xi^2+\big(\cK{}\,\big(\xi+R_{u+cv,n}\big)-M_{u+cv,n}\big)^2\big]^{1/2}\big)^{-3}\,d\xi,
\end{multline}
where\footnote{Bear in mind that $cB_1+B_3[\E{Y}-c\E{T}]=\E{Y}(c\E{Y}\D{T}+\E{T}\D{Y})>0$.}
\begin{equation*}
\begin{gathered}
\cK{}=\dfrac{\E{Y}\sqrt{B_4}[\E{Y}-c\E{T}]}{cB_1+B_3[\E{Y}-c\E{T}]}\,
\begin{cases}
\,>0,&c<\frac{\E{Y}}{\E{T}},
\\[2pt]
\,=0,&c=\frac{\E{Y}}{\E{T}},
\\[2pt]
\,<0,&c>\frac{\E{Y}}{\E{T}},
\end{cases}
\quad
M_{u+cv,n}=\frac{\E{Y}}{c\sqrt{B_1}}\frac{u+cv}{\sqrt{n}}>0,\
\\[8pt]
L_{u+cv,n}=\frac{\sqrt{B_1}}{\sqrt{B_4}}
\bigg(\frac{B_2}{B_1}\frac{u+cv}{\sqrt{n}}-\sqrt{n}\bigg),
\quad
R_{u+cv,n}=\frac{\sqrt{B_1}}{\sqrt{B_4}}
\bigg(\frac{B_3}{B_1c}\frac{u+cv}{\sqrt{n}}+\sqrt{n}\bigg)>0,
\end{gathered}
\end{equation*}
and
\begin{equation*}
M_{u+cv,n}-\cK{}R_{u+cv,n}
=\frac{\E{Y}\sqrt{B_1}}{cB_1+B_3[\E{Y}-c\E{T}]}\left(\frac{u+cv}{\sqrt{n}}-[\E{Y}-c\E{T}]\sqrt{n}\right).
\end{equation*}
In the sequel, put for brevity $\cS=\E{Y}/\E{T}$.

\begin{lemma}\label{ewrehgwew}
We have $\IntOnE=\underline{O}\bigg(\dfrac{\ln(u+cv)}{u+cv}\bigg)$, as
$u+cv\to\infty$.
\end{lemma}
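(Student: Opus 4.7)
The approach is to bound the inner $\xi$-integral by reducing it, via a trivial domination, to an elementary integral, and then to sum over $n$ by splitting at the natural scale $n_*=B_2(u+cv)/B_1$ at which $L_{u+cv,n}$ crosses zero. Since
\[
L_{u+cv,n}+R_{u+cv,n}=\sqrt{B_1/B_4}\,(B_2+B_3/c)(u+cv)/(B_1\sqrt{n})>0,
\]
the integrand in \eqref{sdfghjfm} is positive throughout. Dominating the second factor by $(1+|\xi|)^{-3}$, which is legitimate for every sign of $\cK{}$, one is reduced to estimating
\[
\Jntegral{n}:=\int_{L_{u+cv,n}}^{\infty}\frac{1}{\xi+R_{u+cv,n}}\,\frac{d\xi}{(1+|\xi|)^{3}}.
\]

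Next, I would bound $\Jntegral{n}$ by splitting at $\xi=0$. On $[\max(L_{u+cv,n},0),\infty)$ the elementary inequality $(\xi+R_{u+cv,n})^{-1}\leqslant R_{u+cv,n}^{-1}$ yields a contribution $O(R_{u+cv,n}^{-1})$, and when $L_{u+cv,n}>0$ the sharper bound $O((1+L_{u+cv,n})^{-2}(L_{u+cv,n}+R_{u+cv,n})^{-1})$ also holds. On the negative part $[L_{u+cv,n},0]$ (non-empty only for $n\geqslant n_*$), $(1+|\xi|)^{-3}\leqslant 1$ and the remaining factor integrates explicitly to $\ln(R_{u+cv,n}/(L_{u+cv,n}+R_{u+cv,n}))$, which is the sole source of the eventual logarithm.

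Finally, I would split the outer sum $\sum_{n\geqslant\EnOne}$ into three regimes relative to $n_*\asymp u+cv$: small $n$ ($n\leqslant n_*/2$), where $L_{u+cv,n}$ is positive and of order $\sqrt{(u+cv)/n}$, so the contribution decays geometrically; the near-critical band ($n_*/2\leqslant n\leqslant 2n_*$), where $R_{u+cv,n}\asymp\sqrt{u+cv}$ and the log factor is $O(\ln(u+cv))$; and large $n$ ($n\geqslant 2n_*$), where $R_{u+cv,n}\asymp\sqrt{n}$ and the logarithm becomes $O(1)$. Multiplying by the outer factor $(u+cv)$ and the weight $n^{-3/2}$, the near-critical band dominates and produces exactly $\ln(u+cv)/(u+cv)$, while the other two regimes contribute $o(\ln(u+cv)/(u+cv))$.

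The main obstacle is carrying out the above uniformly across the three cases $\cK{}>0$, $\cK{}=0$, $\cK{}<0$ (i.e., $c\lessgtr\cS$). Sharper decay of the second factor would be available from the $(\cK{}(\xi+R_{u+cv,n})-M_{u+cv,n})^{2}$ term when $\cK{}\ne 0$, but the elementary domination by $(1+|\xi|)^{-3}$ is already sufficient and, crucially, has constants that do not degenerate as $c\to\cS$. The critical case $\cK{}=0$ is the tightest: no linear-in-$\xi$ contribution is available, the logarithm in $\Jntegral{n}$ truly saturates, and this confirms that the claimed order $\ln(u+cv)/(u+cv)$ cannot be improved by this technique.
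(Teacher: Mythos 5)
Your very first reduction is fatal: by dominating the bracket by $(1+|\xi|)^{-3}$ you discard the square $\big(\cK{}(\xi+R_{u+cv,n})-M_{u+cv,n}\big)^2$, and that term is precisely what makes the lemma true. Write $U=u+cv$. For $c=\cS$ (so $\cK{}=0$) the discarded quantity is $M_{u+cv,n}^2$ with $M_{u+cv,n}=\frac{\E{Y}}{c\sqrt{B_1}}\frac{U}{\sqrt{n}}$, which is large for all $\EnOne\leqslant n\ll U^2$; near $\xi=0$ the true integrand is of size $M_{u+cv,n}^{-3}R_{u+cv,n}^{-1}$, whereas your dominated integrand is of size $R_{u+cv,n}^{-1}$. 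This loss cannot be absorbed: on your own near-critical band $n_{*}\leqslant n\leqslant 2n_{*}$, $n_{*}=B_2U/B_1$, we have $L_{u+cv,n}\leqslant 0$ and $R_{u+cv,n}\asymp\sqrt{U}$, hence
\begin{equation*}
\Jntegral{n}\;\geqslant\;\int_{0}^{1}\frac{d\xi}{(\xi+R_{u+cv,n})(1+\xi)^{3}}\;\geqslant\;\frac{K}{\sqrt{U}},
\end{equation*}
so this band alone contributes to your bound for $\IntOnE$ at least $K\,U\cdot U\cdot U^{-3/2}\cdot U^{-1/2}=K>0$. Thus the dominated expression is bounded below by a positive constant (the same $\Theta(1)$ loss occurs throughout $U\lesssim n\lesssim U^{2}$), and no bookkeeping of the three regimes can bring it down to $\ln U/U$. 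Two of the stated regime estimates are also incorrect on their own terms: in the band $R_{u+cv,n}\asymp L_{u+cv,n}+R_{u+cv,n}\asymp\sqrt{U}$, so $\ln\big(R_{u+cv,n}/(L_{u+cv,n}+R_{u+cv,n})\big)=O(1)$ there and is not the source of the logarithm; and for $n\gg U^{2}$ the crude bound $(1+|\xi|)^{-3}\leqslant1$ on $[L_{u+cv,n},0]$ makes that logarithm of order $\ln(n/U)$, whose weighted sum even grows like $U^{1/2}$. Finally, the remark that the case $\cK{}=0$ offers no extra decay is the opposite of the truth: there the discarded term is a large constant in $\xi$ and supplies the decisive factor $M_{u+cv,n}^{-3}$.

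What is needed, and what the paper does, is to keep the discarded square. For $c=\cS$ the sum is split not at $n\asymp U$ (where $L_{u+cv,n}$ changes sign) but at $n\asymp U^{2}$ (where $M_{u+cv,n}\asymp1$): for $n<(\E{T})^{2}U^{2}/B_1$ one drops the $1$ and evaluates $\int_{L}^{\infty}(\xi+R)^{-1}(\xi^{2}+M^{2})^{-3/2}d\xi$ exactly (Lemma~\ref{rthfgnmhfgj}), while for $n>(\E{T})^{2}U^{2}/B_1$ one keeps the $1$, applies the arithmetic--geometric mean inequality and Lemma~\ref{wqerthrjnmr}. The $\ln U$ then arises from summing terms of size $\asymp(nU)^{-1}$ over $\epsilon U<n<CU^{2}$, not from the $\xi$-integral over $[L_{u+cv,n},0]$. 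For $c\ne\cS$ one completes the square in the bracket, which replaces $M_{u+cv,n}$ by $(\tilde{M}-\cK{}\tilde{R})/(1+\cK{2})$, and for $c<\cS$ one must treat separately the $n$'s near $U/(\E{Y}-c\E{T})$ where this quantity vanishes --- a degeneracy that your uniform domination also hides.
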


First, we prove Lemma~\ref{ewrehgwew} for $c=\cS$, bearing in mind that $K_{\cS}=0$. Then we prove it
for $c\ne\cS$.

\begin{proof}[Proof of Lemma~\ref{ewrehgwew} for $c=\cS$]
Let us put for brevity $U=u+\cS v$ and $\hat{L}=\hatL$, $\hat{R}=\hatR$,
$\hat{M}=\hatM$, i.e., for
$\mcA=\frac{\E{T}}{\sqrt{B_1}}\frac{\E{T}\sqrt{\D{Y}}}{\E{Y}\sqrt{\D{T}}}>0$,\
$\mcB=\frac{\sqrt{B_1}}{\sqrt{\D{Y}\D{T}}}>0$,\
$\mcC=\frac{\E{T}}{\sqrt{B_1}}>0$,\ we have
\begin{equation*}
\begin{gathered}
\hat{L}=\frac{\mcC^2}{\mcA}\frac{U}{\sqrt{n}}-\mcB\sqrt{n}\,
\begin{cases}
\,>0,\
n<\frac{B_2}{B_1}U,
\\[6pt]
\,<0,\ n>\frac{B_2}{B_1}U,
\end{cases}
\quad
\hat{R}=\mcA\frac{U}{\sqrt{n}}+\mcB\sqrt{n}>0,
\\[6pt]
0<\hat{M}=\mcC\dfrac{U}{\sqrt{n}}\,
\begin{cases}
\,>1,\
n<\frac{(\E{T})^2}{B_1}U^2,
\\[6pt]
\,<1,\ n>\frac{(\E{T})^2}{B_1}U^2,
\end{cases}
\quad \hat{L}+\hat{R}=\frac{\mcC^2+\mcA^2}{\mcA}\frac{U}{\sqrt{n}}>0
\end{gathered}
\end{equation*}
and \hskip 2pt $\IntOneHaT=\IntOnE\big|_{c=\cS}$, i.e.,
$\IntOneHaT=U\sum_{n>\EnOne}n^{-3/2}\int_{\hat{L}}^{\infty}\big(\xi+\hat{R}\big)^{-1}
\big(1+\big[\xi^2+\hat{M}^2\big]^{1/2}\big)^{-3}\,d\xi$.

\smallskip
It is easily seen that
\begin{equation}\label{ewrgteyherj}
\IntOneHaT\leqslant K_1\Integral{1}+K_2\Integral{2},
\end{equation}
where
\begin{equation*}
\begin{aligned}
\Integral{1}&=U\sum_{\EnOne<n<\frac{(\E{T})^2}{B_1}\,U^2}
n^{-3/2}\int_{\hat{L}}^{\infty}\big(\xi+\hat{R}\big)^{-1}
\big(\xi^2+\hat{M}^2\big)^{-3/2}\,d\xi,
\\[-2pt]
\Integral{2}&=U\sum_{n>\frac{(\E{T})^2}{B_1}\,U^2}
n^{-3/2}\int_{\hat{L}}^{\infty}\big(\xi+\hat{R}\big)^{-1}
\big(1+(2\hat{M}|\xi|)^{1/2}\big)^{-3}\,d\xi.
\end{aligned}
\end{equation*}

The essence of \eqref{ewrgteyherj} is the following. For $n$ such that
$\hat{M}>1$, we simplify the denominator $(1+[\xi^2+\hat{M}^2]^{1/2})^{3}$ by
switching to $(\xi^2+\hat{M}^2)^{3/2}$. The latter has no singularity since
$\hat{M}>1$. For $n$ such that $\hat{M}<1$, we keep $1$ in the denominator and
use the inequality between the arithmetic mean and the geometric mean. Both
these estimates are such that the integrals in $\Integral{1}$ and
$\Integral{2}$ may be evaluated explicitly.

Examining $\Integral{1}$, the explicit expression for the integral
$\int_{\hat{L}}^{\infty}(\xi+\hat{R})^{-1}(\xi^2+\hat{M}^2)^{-3/2}\,d\xi$ is
found in Lemma~\ref{rthfgnmhfgj}. Using it, the asymptotic behavior of
$\Integral{1}$, as $U\to\infty$, is checked as required in
Section~\ref{qasrdtjkyl}.

Examining $\Integral{2}$ and bearing in mind that $\hat{L}<0$ for
$n>\frac{(\E{T})^2}{B_1}\,U^2$, we split the integrand and make the change of
variables as follows:
\begin{multline*}
\int_{\hat{L}}^{\infty}\big(\xi+\hat{R}\big)^{-1}
\big(1+(2\hat{M}|\xi|)^{1/2}\big)^{-3}\,d\xi
\\
=\int_{0}^{\infty}\big(\xi+\hat{R}\big)^{-1}
\big(1+(2\hat{M}\xi)^{1/2}\big)^{-3}\,d\xi+\int_{0}^{-\hat{L}}\big(\hat{R}-\xi\big)^{-1}
\big(1+(2\hat{M}\xi)^{1/2}\big)^{-3}\,d\xi
\\
=\int_{0}^{\infty}\big(2\hat{M}\hat{R}+\zeta\big)^{-1}
\big(1+\sqrt{\zeta}\big)^{-3}\,d\zeta+\int_{0}^{-\frac{\hat{L}}{2\hat{M}}}\big(2\hat{M}\hat{R}-\zeta\big)^{-1}
\big(1+\sqrt{\zeta}\big)^{-3}\,d\zeta.
\end{multline*}
The explicit expressions for two latter integrals are found in Lemma~\ref{wqerthrjnmr}.
Using this result, the asymptotic behavior of $\Integral{2}$, as $U\to\infty$, is checked as required
in Section~\ref{qwerjkgulgh}. The proof is complete.
\end{proof}

\begin{proof}[Proof of Lemma~\ref{ewrehgwew} for $c\ne\cS$]
Let us put for brevity $U=u+cv$ and $\tilde{L}=\XatL$, $\tilde{R}=\XatR$,
$\tilde{M}=\XatM$, i.e., for\footnote{If we put $c=\cS$ in these expressions,
they will be equal to $\mcA$, $\mcB$, $\mcC$ introduced in the proof of
Lemma~\ref{ewrehgwew} for $c=\cS$.} $\mcA=\frac{\E{T}}{\sqrt{B_1}}
\frac{\sqrt{\D{Y}}}{c\sqrt{\D{T}}}>0$,\
$\mcB=\frac{\sqrt{B_1}}{\sqrt{\D{Y}\D{T}}}>0$,\
$\mcC=\frac{\E{Y}}{c\sqrt{B_1}}>0$, we have
\begin{equation}\label{sdfghffghjfgj}
\begin{gathered}
\tilde{L}=\frac{c\,\E{T}\mcC^2}{\E{Y}\mcA}\frac{U}{\sqrt{n}}-\mcB\sqrt{n}\,
\begin{cases}
\,>0,\ n<\frac{B_2}{B_1}U,
\\[6pt]
\,<0,\ n>\frac{B_2}{B_1}U,
\end{cases}
\quad
\tilde{R}=\mcA\frac{U}{\sqrt{n}}+\mcB\sqrt{n}>0,\quad
\\[8pt]
0<\tilde{M}=\mcC\frac{U}{\sqrt{n}}\,
\begin{cases}
\,>1,\
n<\frac{(\E{Y})^2}{c^2B_1}U^2,
\\[6pt]
\,<1,\ n>\frac{(\E{Y})^2}{c^2B_1}U^2,
\end{cases}
\quad
\tilde{L}+\tilde{R}=\frac{c\E{T}\mcC^2+\E{Y}\mcA^2}{\E{Y}\mcA}\frac{U}{\sqrt{n}}>0.
\end{gathered}
\end{equation}
Bearing in mind that
$cB_1+B_3[\E{Y}-c\E{T}]=\E{Y}(c\E{Y}\D{T}+\E{T}\D{Y})>0$, we have
\begin{equation}\label{wqqgbdf}
\begin{aligned}
\tilde{M}-\cK{}\tilde{R}&=\frac{\E{Y}\sqrt{B_1}}{cB_1+B_3[\E{Y}-c\E{T}]}
\,\bigg(\frac{U}{\sqrt{n}}-[\E{Y}-c\E{T}]\sqrt{n}\bigg)
\\[4pt]
&=\big(\mcC-\cK{}\mcA\big)\tfrac{U}{\sqrt{n}}-\cK{}\mcB\sqrt{n}.
\end{aligned}
\end{equation}

Let us rewrite \eqref{sdfghjfm} as
\begin{multline*}
\IntOnE=U\sum_{n=\EnOne}^{\infty}n^{-3/2}\int_{\tilde{L}}^{\infty}\big(\xi+\tilde{R}\big)^{-1}
\big(1+\big[(1+\cK{2})\xi^2-2\cK{}(\tilde{M}-\cK{}\tilde{R})\xi
\\[-6pt]
+(\tilde{M}-\cK{}\tilde{R})^2\big]^{1/2}\big)^{-3}\,d\xi
\end{multline*}
and, completing the square and making the change of variables, rewrite it as
\begin{multline}\label{sdaffjhfgmkf}
\IntOnE=\frac{U}{(1+K^2_{c})^{3/2}}\sum_{n=\EnOne}^{\infty}n^{-3/2}
\int_{\tilde{L}-\cK{}\frac{\tilde{M}-\cK{}\tilde{R}}{1+K^2_{c}}}^{\infty}
\bigg(\zeta+\frac{\tilde{R}+\cK{}\tilde{M}}{1+K^2_{c}}\bigg)^{-1}
\\[-2pt]
\times\Bigg(\frac{1}{(1+K^2_{c})^{1/2}}+ \bigg[\zeta^2
+\bigg(\frac{\tilde{M}-\cK{}\tilde{R}}{1+K^2_{c}}\bigg)^2\bigg]^{1/2}\Bigg)^{-3}\,d\zeta.
\end{multline}

\medskip\paragraph{\it Case $c>\cS$}\label{sdrtgfjkt}

In this case, $\cK{}<0$. The second summand in brackets in the integrand in
\eqref{sdaffjhfgmkf} is positive since in this case
$\tilde{M}-\cK{}\tilde{R}>0$; it is easily seen from the second equality
\eqref{sdfghffghjfgj}. Moreover, for $\cK{}<0$ the difference
$\tilde{M}-\cK{}\tilde{R}$ increases, as $n$ increases, and exceeds $K\sqrt{U}$
for $n>\EnOne$. The integrand in \eqref{sdaffjhfgmkf} has no singularities in
the region of integration since
\begin{equation*}
\tilde{L}-\cK{}\frac{\tilde{M}-\cK{}\tilde{R}}{1+K^2_{c}}
+\frac{\tilde{R}+\cK{}\tilde{M}}{1+K^2_{c}}=\tilde{L}+\tilde{R}>0.
\end{equation*}

We use the estimate $\IntOnE\leqslant K_3\Integral{3}$, where
\begin{equation*}
\Integral{3}=U\sum_{n=\EnOne}^{\infty}n^{-3/2}
\int_{\tilde{L}-\cK{}\frac{\tilde{M}-\cK{}\tilde{R}}{1+\cK{2}}}^{\infty}
\bigg(\zeta+\frac{\tilde{R}+\cK{}\tilde{M}}{1+K^2_{c}}\bigg)^{-1}
\bigg(\zeta^2+\bigg(\frac{\tilde{M}-\cK{}\tilde{R}}{1+K^2_{c}}\bigg)^2\,\bigg)^{-3/2}\,d\zeta.
\end{equation*}

The explicit expression for the integral in $\Integral{3}$ is found in
Lemma~\ref{wqerthrjnmr}. Using it, the asymptotic behavior of $\Integral{3}$,
as $U\to\infty$, is checked as required in Section~\ref{werrtyjrtjrtrjk}. The
proof is complete.

\medskip\paragraph{\it Case $c<\cS$}\label{ewrtyjrtk}

In this case, $\cK{}>0$. We have (see \eqref{wqqgbdf})
\begin{equation*}
\tilde{M}-\cK{}\tilde{R}\,
\begin{cases}
\,>0,&n<\frac{U}{\E{T}(\cS-c)},
\\[4pt]
\,<0,&n>\frac{U}{\E{T}(\cS-c)}.
\end{cases}
\end{equation*}
It is easily seen that $\IntOnE\leqslant K_4\Integral{4}+K_5\Integral{5}+K_6\Integral{6}$, where
\begin{equation*}
\begin{aligned}
\Integral{4}&=U\hskip -8pt\sum_{\EnOne<n<\frac{U}{\E{Y}-c\E{T}}-\frac{K}{\cK{}\mcB}U}n^{-3/2}
\int_{\tilde{L}-\cK{}\frac{\tilde{M}-\cK{}\tilde{R}}{1+\cK{2}}}^{\infty}
\bigg(\zeta+\frac{\tilde{R}+\cK{}\tilde{M}}{1+\cK{2}}\bigg)^{-1}
\\[-4pt]
&\hskip 200pt\times\bigg(\zeta^2+\bigg(\frac{\tilde{M}-\cK{}\tilde{R}}{1+\cK{2}}\bigg)^2\bigg)^{-3/2}\,d\zeta,
\\[4pt]
\Integral{5}&=U\hskip -2pt\sum_{\frac{U}{\E{Y}-c\E{T}}-\frac{K}{\cK{}\mcB}U
<n<\frac{U}{\E{Y}-c\E{T}}+\frac{K}{\cK{}\mcB}U}n^{-3/2}
\int_{\tilde{L}-\cK{}\frac{\tilde{M}-\cK{}\tilde{R}}{1+\cK{2}}}^{\infty}
\bigg(\zeta+\frac{\tilde{R}+\cK{}\tilde{M}}{1+\cK{2}}\bigg)^{-1}
\\[-4pt]
&\hskip 200pt\times\bigg(1+\bigg(2\bigg(\frac{\tilde{M}-\cK{}\tilde{R}}{1+\cK{2}}\bigg)
\,|\zeta|\bigg)^{1/2}\bigg)^{-3}\,d\zeta,
\\[4pt]
\Integral{6}&=U\hskip -8pt\sum_{n>\frac{U}{\E{Y}-c\E{T}}+\frac{K}{\cK{}\mcB}U}n^{-3/2}
\int_{\tilde{L}-\cK{}\frac{\tilde{M}-\cK{}\tilde{R}}{1+\cK{2}}}^{\infty}
\bigg(\zeta+\frac{\tilde{R}+\cK{}\tilde{M}}{1+\cK{2}}\bigg)^{-1}
\\[-4pt]
&\hskip 200pt\times\bigg(\zeta^2
+\bigg(\frac{\tilde{M}-\cK{}\tilde{R}}{1+\cK{2}}\bigg)^2\bigg)^{-3/2}\,d\zeta.
\end{aligned}
\end{equation*}

It is easily seen that since
\begin{equation}\label{qwrethrj}
\frac{\mcC-\cK{}\mcA}{\cK{}\mcB}=\frac{1}{\E{Y}-c\E{T}}=\frac{\cS}{\E{Y}(\cS-c)},
\end{equation}
which can be verified by direct calculations, the range of summation
$\EnOne<n<\frac{U}{\E{Y}-c\E{T}}-\frac{K}{\cK{}\mcB}U$ in $\Integral{4}$ may we
written as $\EnOne<n<\frac{\mcC-\cK{}\mcA}{\cK{}\mcB}U-\frac{K}{\cK{}\mcB}U$,
the range of summation $\frac{U}{\E{Y}-c\E{T}}-\frac{K}{\cK{}\mcB}U
<n<\frac{U}{\E{Y}-c\E{T}}+\frac{K}{\cK{}\mcB}U$ in $\Integral{5}$ may we
written as $\frac{\mcC-\cK{}\mcA}{\cK{}\mcB}U-\frac{K}{\cK{}\mcB}U
<n<\frac{\mcC-\cK{}\mcA}{\cK{}\mcB}U+\frac{K}{\cK{}\mcB}U$, and the range of
summation $n>\frac{U}{\E{Y}-c\E{T}}+\frac{K}{\cK{}\mcB}U$ in $\Integral{6}$ may
we written as $n>\frac{\mcC-\cK{}\mcA}{\cK{}\mcB}U+\frac{K}{\cK{}\mcB}U$.

The explicit expressions for the integrals in $\Integral{4}$ and $\Integral{6}$
are similar to that one for the integral in $\Integral{3}$. Using it, the
asymptotic behavior of $\Integral{4}$ and $\Integral{6}$, as $U\to\infty$, is
checked as required in Section~\ref{saerdfgsfh}.

The explicit expression for the integral in $\Integral{5}$ is similar to that one for the integral
in $\Integral{2}$. Using it, the asymptotic behavior of $\Integral{5}$, as $U\to\infty$, is checked
as required in Section~\ref{fghrfhdfd}. The proof is complete.
\end{proof}

\subsection*{\textit{Processing of}\hskip 6pt $\IntOne{1}$}\label{rdthtrjrywqe}

The same way as for $\IntOnE$, rewrite $\IntOne{1}$ as
\begin{multline*}
\IntOne{1}=(u+cv)\sum_{n=\EnOne}^{\infty}n^{-2}\int_{L_{u+cv,n}}^{\infty}\big(\xi+R_{u+cv,n}\big)^{-1}
\big|\,\xi\,\big|
\\[-2pt]
\times\big(1+\big[\xi^2+\big(\cK{}\,\big(\xi+R_{u+cv,n}\big)-M_{u+cv,n}\big)^2\big]^{1/2}\big)^{-3}\,d\xi.
\end{multline*}

\begin{lemma}\label{dfetyjrtjr}
We have $\IntOne{1}=\underline{O}\bigg(\dfrac{\ln(u+cv)}{u+cv}\bigg)$,
as $u+cv\to\infty$.
\end{lemma}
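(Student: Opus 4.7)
The expression $\IntOne{1}$ differs from $\IntOnE$ (handled in Lemma~\ref{ewrehgwew}) in only two respects: the weight $n^{-3/2}$ is strengthened to $n^{-2}$, and an extra factor $|\xi|$ appears in the integrand. My plan is to rerun the argument of Lemma~\ref{ewrehgwew} with these two modifications, noting that they essentially cancel in terms of overall magnitude.

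First I would absorb the extra $|\xi|$ into the denominator by using $|\xi|\leq\sqrt{\xi^2+(\cK{}(\xi+R_{u+cv,n})-M_{u+cv,n})^2}\leq 1+\sqrt{\xi^2+(\cK{}(\xi+R_{u+cv,n})-M_{u+cv,n})^2}$, which yields
\begin{equation*}
|\xi|\bigl(1+\bigl[\xi^2+(\cK{}(\xi+R_{u+cv,n})-M_{u+cv,n})^2\bigr]^{1/2}\bigr)^{-3}
\leq \bigl(1+\bigl[\xi^2+(\cK{}(\xi+R_{u+cv,n})-M_{u+cv,n})^2\bigr]^{1/2}\bigr)^{-2}.
\end{equation*}
This reduces $\IntOne{1}$ to an expression of the same shape as $\IntOnE$ but with weight $n^{-2}$ in place of $n^{-3/2}$ and with the exponent in the last factor reduced from $-3$ to $-2$.

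Next I would split the analysis into the three cases $c=\cS$, $c>\cS$, $c<\cS$, exactly as in the proof of Lemma~\ref{ewrehgwew}. In each case I complete the square in $(\cK{}(\xi+R_{u+cv,n})-M_{u+cv,n})^2$ and change variables to a centred $\zeta$ (so that the previous identity \eqref{sdaffjhfgmkf} holds with the exponent $-2$). The sum over $n$ then splits into subranges according to the sign and size of $\tilde M-\cK{}\tilde R$: when this quantity exceeds a constant, I drop the $1$ in the denominator and evaluate $\int(\zeta+R')^{-1}(\zeta^{2}+A^{2})^{-1}d\zeta$ explicitly by partial fractions (the analogue of Lemma~\ref{rthfgnmhfgj}); in the narrow window where it is small, I apply the AM/GM bound $1+\sqrt{\zeta^{2}+A^{2}}\geq (2A|\zeta|)^{1/2}$ and evaluate the resulting integral via the analogue of Lemma~\ref{wqerthrjnmr}. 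Since $\sum_{n\geq\EnOne} n^{-2}\sim (u+cv)^{-1}$ (one power better than $\sum n^{-3/2}$), this extra gain compensates exactly for the one power lost in weakening the $\xi$-decay from $-3$ to $-2$.

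The main obstacle is identical to the one encountered in Lemma~\ref{ewrehgwew}: in the subcritical regime $c<\cS$, the quantity $\tilde M-\cK{}\tilde R$ changes sign near $n\approx U/(\E{Y}-c\E{T})$, producing a window of width $\sim U$ in which the quadratic damping vanishes and only the AM/GM bound is available. I will need to verify carefully that after the replacement of the exponent $-3$ by $-2$ the AM/GM-based integral $\int_{0}^{\cdot}(2M\hat R\pm\zeta)^{-1}(1+\sqrt{\zeta})^{-2}\,d\zeta$ still contributes at most $O(\log U)$, so that combined with the prefactor $U$ and the sum $\sum n^{-2}\sim U^{-1}$ it produces the required $O(\ln(u+cv)/(u+cv))$. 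The other subranges are routine, yielding bounds of strictly smaller order. Once these checks are in place, the claim follows by combining the three cases.
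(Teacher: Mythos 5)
Your opening reduction, $|\xi|\leqslant 1+\big[\xi^2+(\cK{}(\xi+R_{u+cv,n})-M_{u+cv,n})^2\big]^{1/2}$, which trades the factor $|\xi|$ for one power of the cubic decay, is legitimate, and it is a genuinely different route from the paper's: the paper keeps $|\xi|$ and evaluates the resulting integrals exactly (Lemmas~\ref{wqetfyuk6} and \ref{eruyjtky}), carries out the details only for $c=\cS$, and leaves $c\ne\cS$ to the reader. Your $c=\cS$ and $c>\cS$ cases would indeed go through this way. The genuine gap is exactly at the step you flag as the crux, the sign-change window for $c<\cS$, where the plan as written does not reach the claimed rate. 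First, the bookkeeping is off: writing $U=u+cv$ and $n_0=U/(\E{Y}-c\E{T})$, the window $|n-n_0|\lesssim U$ contains of order $U$ values of $n$, each with $n^{-2}\asymp U^{-2}$, so ``prefactor $U$ times a uniform $O(\ln U)$ bound on the integrals times $\sum_{\mathrm{window}}n^{-2}\sim U^{-1}$'' yields $O(\ln U)$, not $O(\ln U/U)$ (and even a window of width $\sqrt U$ would only give $O(\ln U/\sqrt U)$). The target can only be reached by using the decay of the $n$-th integral in the distance from the window centre: with $A_n=\frac{|\tilde M-\cK{}\tilde R|}{1+\cK{2}}\asymp|n-n_0|/\sqrt U$ (see \eqref{wqqgbdf}) and $\tilde L+\tilde R\asymp\tilde R\asymp\sqrt U$ (see \eqref{sdfghffghjfgj}), the integral is $\asymp U^{-1/2}(1+A_n)^{-1}$, and it is the harmonic-type sum $U\sum_{\mathrm{window}} n^{-2}\,U^{-1/2}(1+A_n)^{-1}$ that produces the single logarithm.

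Second, the AM/GM step is too lossy once the third power of decay has already been spent on absorbing $|\xi|$: for the $n$-th term one gets $\int_0^\infty(K+\zeta)^{-1}(1+\sqrt\zeta)^{-2}d\zeta\asymp\ln(K)/K$ with $K=2A_n R'\asymp|n-n_0|$, an extra logarithm compared with the true size $\asymp 1/K$, and summing over the window then gives only $O((\ln U)^2/U)$, one log short of the statement. Inside the window you should therefore not discard the $\zeta^2$ term: either bound $\big(1+[\zeta^2+A_n^2]^{1/2}\big)^{-2}\leqslant(1+|\zeta|)^{-2}$ together with $(\zeta+R')^{-1}\leqslant(\tilde L+\tilde R)^{-1}\asymp U^{-1/2}$, which gives the per-term bound $\lesssim U^{-1/2}(1+A_n)^{-1}$ quoted above, or retain the original factor $|\xi|$ there, as the paper's formulation does, so that the integrand vanishes on the ridge $\Lambda_n=0$; the AM/GM device should at most be reserved for the $O(1)$ central terms where $K\lesssim1$. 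Outside the window your partial-fraction evaluation of $\int(\zeta+R')^{-1}(\zeta^2+A^2)^{-1}d\zeta$ is adequate, and in the $c=\cS$ case your route recovers the same $\ln(U)U^{-1}$ order as the paper's $\Integrall{1}$; so the proposal is repairable, but the window analysis must be redone along the lines above rather than by the count ``$U\times O(\ln U)\times\sum n^{-2}$''.
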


\begin{proof}[Proof of Lemma~\ref{dfetyjrtjr} for $c=\cS$]
Retaining notation used in Lemma~\ref{ewrehgwew}, consider $\IntOneHat{1}=\IntOne{1}\big|_{c=\cS}$, i.e.,
\begin{equation*}
\IntOneHat{1}=U\sum_{n>\EnOne}n^{-2}\int_{\hat{L}}^{\infty}|\xi|\big(\xi+\hat{R}\big)^{-1}
\big(1+\big[\xi^2+\hat{M}^2\big]^{1/2}\big)^{-3}\,d\xi\leqslant
K_1\Integrall{1}+K_2\Integrall{2},
\end{equation*}
where\footnote{In $\Integrall{1}$, the first integral is with $\hat{L}>0$ and
the second with $\hat{L}<0$. In $\Integrall{2}$, the integral is with
$\hat{L}<0$ and $0<\hat{M}<1$.}
\begin{equation*}
\begin{aligned}
\Integrall{1}&=U\sum_{\EnOne<n<\frac{B_2}{B_1}U}
n^{-2}\int_{\hat{L}}^{\infty}|\xi|\big(\xi+\hat{R}\big)^{-1}
\big(\xi^2+\hat{M}^2\big)^{-3/2}\,d\xi
\\[-4pt]
&\hskip 60pt+U\sum_{\frac{B_2}{B_1}U<n<\frac{(\E{T})^2}{B_1}\,U^2}
n^{-2}\int_{\hat{L}}^{\infty}|\xi|\big(\xi+\hat{R}\big)^{-1}
\big(\xi^2+\hat{M}^2\big)^{-3/2}\,d\xi,
\\
\Integrall{2}&=U\sum_{n>\frac{(\E{T})^2}{B_1}\,U^2}
n^{-2}\int_{\hat{L}}^{\infty}|\xi|\big(\xi+\hat{R}\big)^{-1}
\big(1+(2\hat{M}|\xi|)^{1/2}\big)^{-3}\,d\xi.
\end{aligned}
\end{equation*}
The asymptotic behavior of $\Integrall{1}$, as $U\to\infty$, is checked as required in
Section~\ref{sadghjmgh}. The asymptotic behavior of $\Integrall{2}$, as $U\to\infty$,
is checked as required in Section~\ref{wertkktukt}.
\end{proof}

\begin{proof}[Proof of Lemma~\ref{dfetyjrtjr} for $c\ne\cS$]
This proof is a modification of the proof of Lemma \ref{dfetyjrtjr} for
$c=\cS$, alike the proof of Lemma~\ref{ewrehgwew} for $c\ne\cS$ was a
modification of that proof for $c=\cS$. It uses essentially the same techniques
and is left to the reader.
\end{proof}

\subsection*{\textit{Processing of}\hskip 6pt $\IntOne{2}$}\label{asdffklg}

Just as we did in the analysis of $\IntOnE$, rewrite $\IntOne{2}$ as
\begin{multline*}
\IntOne{2}=(u+cv)\sum_{n=\EnOne}^{\infty}n^{-2}\int_{L_{u+cv,n}}^{\infty}\big(\xi+R_{u+cv,n}\big)^{-1}
\,\big|\cK{}\,\big(\xi+R_{u+cv,n}\big)-M_{u+cv,n}\big|
\\[-2pt]
\times\big(1+\big[\xi^2+\big(\cK{}\,\big(\xi+R_{u+cv,n}\big)-M_{u+cv,n}\big)^2\big]^{1/2}\big)^{-3}\,d\xi.
\end{multline*}

\begin{lemma}\label{dfhgjkmgh}
We have $\IntOne{2}=\underline{O}\bigg(\dfrac{\ln(u+cv)}{u+cv}\bigg)$,
as $u+cv\to\infty$.
\end{lemma}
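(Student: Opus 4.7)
The plan is to mirror the proofs of Lemmas~\ref{ewrehgwew} and~\ref{dfetyjrtjr}: split into the cases $c=\cS$ and $c\ne\cS$, and in each case reduce the integral over $\xi$ to a combination of the explicit integrals of Lemmas~\ref{rthfgnmhfgj} and~\ref{wqerthrjnmr}. The new ingredient compared with $\IntOne{1}$ is that the factor $|\cK{}(\xi+R_{u+cv,n})-M_{u+cv,n}|$, which is $|\Delta_{n}|$ in the $\xi$-variable, plays the role that $|\xi|$ played for $\IntOne{1}$, so the case analysis needs to be keyed on the sign and size of $\cK{}(\xi+\tilde{R})-\tilde{M}$ rather than on the sign of $\xi$.

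For $c=\cS$, one has $\cK{}=0$ by definition, so $|\cK{}(\xi+R_{U,n})-M_{U,n}|$ collapses to the constant (in $\xi$) $\hat{M}=\mcC U/\sqrt{n}$, with $U=u+\cS v$ and $\hat{L},\hat{R},\hat{M}$ as in the proof of Lemma~\ref{ewrehgwew}. Then
\begin{equation*}
\IntOneHat{2}=\mcC\,U^{2}\sum_{n>\EnOne}n^{-5/2}\int_{\hat{L}}^{\infty}\bigl(\xi+\hat{R}\bigr)^{-1}\bigl(1+[\xi^{2}+\hat{M}^{2}]^{1/2}\bigr)^{-3}\,d\xi.
\end{equation*}
The split at $n=(\E{T})^{2}U^{2}/B_{1}$ (the threshold $\hat{M}=1$) is identical to the one used for $\IntOneHaT$: for $n<(\E{T})^{2}U^{2}/B_{1}$ replace $(1+\sqrt{\xi^{2}+\hat{M}^{2}})^{-3}$ by $(\xi^{2}+\hat{M}^{2})^{-3/2}$ and invoke Lemma~\ref{rthfgnmhfgj}; for $n>(\E{T})^{2}U^{2}/B_{1}$ use $(1+\sqrt{\xi^{2}+\hat{M}^{2}})^{-3}\leqslant(1+(2\hat{M}|\xi|)^{1/2})^{-3}$ from $(|\xi|-\hat{M})^{2}\geqslant0$, and apply Lemma~\ref{wqerthrjnmr} after the substitution $\zeta=2\hat{M}\xi$. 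The resulting asymptotic estimates are exactly those of Sections~\ref{qasrdtjkyl} and~\ref{qwerjkgulgh} applied to the summands $n^{-5/2}\hat{M}=\mcC\,U n^{-3}$ and similar; since the extra factor $\hat{M}$ is $\mcC U/\sqrt{n}$ and the summation variable ranges are the same as before, the bookkeeping yields $\IntOneHat{2}=\underline{O}(\ln U/U)$.

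For $c\ne\cS$, perform the change of variables $\zeta=\xi-\cK{}(\tilde{M}-\cK{}\tilde{R})/(1+\cK{2})$ as in \eqref{sdaffjhfgmkf}, under which
\begin{equation*}
\cK{}(\xi+\tilde{R})-\tilde{M}=\cK{}\zeta-\frac{\tilde{M}-\cK{}\tilde{R}}{1+\cK{2}},
\end{equation*}
so the extra factor in $\IntOne{2}$ is bounded by $|\cK{}|\,|\zeta|+|\tilde{M}-\cK{}\tilde{R}|/(1+\cK{2})$. The case $c>\cS$ ($\cK{}<0$) is handled by a single bound analogous to $\Integral{3}$, while the case $c<\cS$ ($\cK{}>0$) requires the three-fold split used for $\Integral{4},\Integral{5},\Integral{6}$ around the critical summation index $n\approx U/(\E{Y}-c\E{T})$ where $\tilde{M}-\cK{}\tilde{R}$ changes sign. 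In each of the six resulting subregions one estimates the $|\zeta|$-part exactly as in Lemma~\ref{dfetyjrtjr} (after replacing $n^{-2}$ by $n^{-2}\cdot$~constant) and the constant-in-$\zeta$ part exactly as in Lemma~\ref{ewrehgwew}, combining the two by the triangle inequality.

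The main obstacle is precisely the $c<\cS$ middle range, where $\tilde{M}-\cK{}\tilde{R}$ is small and the exponent-like factor $(1+[\zeta^{2}+((\tilde{M}-\cK{}\tilde{R})/(1+\cK{2}))^{2}]^{1/2})^{-3}$ loses its polynomial decay; here one must again fall back on the AM-GM-type bound $(1+\sqrt{2|(\tilde{M}-\cK{}\tilde{R})/(1+\cK{2})||\zeta|})^{-3}$, and the extra factor $|\cK{}\zeta-(\tilde{M}-\cK{}\tilde{R})/(1+\cK{2})|$ must be integrated against it by a substitution $\eta=2|(\tilde{M}-\cK{}\tilde{R})/(1+\cK{2})|\zeta$. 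The computation is mechanical but tedious; since $|\tilde{M}-\cK{}\tilde{R}|=O(\sqrt{U})$ on the middle range of $n$, one checks that the new factor contributes at most $O(\sqrt{U/n})$ uniformly, which combined with the summation in $n$ of width $O(U)$ around the critical index still produces the required bound $\underline{O}(\ln(u+cv)/(u+cv))$.
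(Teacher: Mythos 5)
Your proposal follows essentially the same route as the paper, whose entire proof of this lemma is the remark that it goes along the same lines as the proof of Lemma~\ref{dfetyjrtjr}: you reuse the $c=\cS$ versus $c\ne\cS$ split, the completion of the square and change of variables of \eqref{sdaffjhfgmkf}, and the explicit integrals of Lemmas~\ref{rthfgnmhfgj}--\ref{eruyjtky}, with the extra factor $\big|\cK{}\,(\xi+R_{u+cv,n})-M_{u+cv,n}\big|$ correctly reduced to the constant $\hat{M}$ when $\cK{}=0$ and to $\big|\cK{}\zeta-(\tilde{M}-\cK{}\tilde{R})/(1+\cK{2})\big|$ after the shift otherwise, exactly as intended. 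In fact, at $c=\cS$ your argument can be shortened: since $n>\EnOne=\epsilon(u+cv)$ the extra factor $n^{-1/2}\hat{M}=\mcC\,(u+cv)/n\leqslant\mcC/\epsilon$ is bounded, so $\IntOneHat{2}\leqslant K\,\IntOneHaT$ and Lemma~\ref{ewrehgwew} applies directly, without redoing the split at $\hat{M}=1$.
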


\begin{proof}
This proof goes along the same lines as the proof of Lemma~\ref{dfetyjrtjr} and
is left to the reader.
\end{proof}

\subsection*{Asymptotic analysis of the expressions of the second kind}\label{srgserhgs}

By the expressions of the second kind we call those arising when we simplify
the main term of approximation \eqref{werkyhjrtjr}. Their integrands contain
exponential, inherited from CLT, and rational functions. The first expression
of this type (cf. \eqref{ewdtehfdj}) is
\begin{equation*}
\IntTwO=(u+cv)\sum_{n=\EnOne}^{\infty}n^{-1}\int_{0}^{\infty}\frac{1}{1+x}
\exp\big\{-\tfrac{1}{2}\big[\Lambda_n^2(u+cv,x)+\Delta^2_{n}(u+cv,x)\big]\big\}dx.
\end{equation*}
Other expressions of this type are (cf. \eqref{drgterhr} and
\eqref{wertgr3eyh})
\begin{equation*}
\begin{aligned}
\IntTwo{1}&=(u+cv)\sum_{n=\EnOne}^{\infty}n^{-3/2}\int_{0}^{\infty}\frac{|\Lambda_{n}(u+cv,x)|}{1+x}
\exp\big\{-\tfrac{1}{2}\big[\Lambda_n^2(u+cv,x)
\\[-4pt]
&\hskip 250pt+\Delta^2_{n}(u+cv,x)\big]\big\}dx,
\\[-4pt]
\IntTwo{2}&=(u+cv)\sum_{n=\EnOne}^{\infty}n^{-3/2}\int_{0}^{\infty}\frac{|\Delta_{n}(u+cv,x)|}{1+x}
\exp\big\{-\tfrac{1}{2}\big[\Lambda_n^2(u+cv,x)
\\[-4pt]
&\hskip 250pt+\Delta^2_{n}(u+cv,x)\big]\big\}dx
\end{aligned}
\end{equation*}
and (see Section~\ref{asfdghjnm} below)
\begin{equation*}
\begin{aligned}
\IntTwo{3}&=(u+cv)^{1/2}\sum_{n=\EnOne}^{\infty}n^{-1}\int_{0}^{\infty}
\frac{|\Lambda_{n}(u+cv,x)|}{(1+x)^{3/2}}\exp\big\{-\tfrac{1}{2}\big[\Lambda_n^2(u+cv,x)
\\[-4pt]
&\hskip 250pt+\Delta^2_{n}(u+cv,x)\big]\big\}dx,
\\[-4pt]
\IntTwo{4}&=(u+cv)^{1/2}\sum_{n=\EnOne}^{\infty}n^{-1}\int_{0}^{\infty}
\frac{|\Delta_{n}(u+cv,x)|}{(1+x)^{3/2}}\exp\big\{-\tfrac{1}{2}\big[\Lambda_n^2(u+cv,x)
\\[-4pt]
&\hskip 250pt+\Delta^2_{n}(u+cv,x)\big]\big\}dx.
\end{aligned}
\end{equation*}

\subsection*{\textit{Processing of}\hskip 6pt $\IntTwO$}\label{qwerrjftfgh}

Applying the identities of Lemma~\ref{ewrgfwdgsgw} and making the change of
variables $\xi=-\Lambda_{n}(u+cv,x)$ in the integral with respect to $x$, we
rewrite it as
\begin{multline}\label{wqghjmtjrj}
\IntTwO=(u+cv)\,\sum_{n=\EnOne}^{\infty}n^{-3/2}\int_{\XatL}^{\infty}
(\xi+\XatR)^{-1}
\\
\times\exp\big\{-\tfrac12\big[\xi^2+\{\cK{}[\xi+\XatR]-\XatM\}^2\big]\big\}\,d\xi.
\end{multline}

\begin{lemma}\label{qqwehberhgert}
We have $\IntTwO=\underline{O}\bigg(\dfrac{\ln(u+cv)}{u+cv}\bigg)$,
as $u+cv\to\infty$.
\end{lemma}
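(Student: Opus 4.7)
My plan is to follow the architecture of the proof of Lemma~\ref{ewrehgwew} for the first-kind analog $\IntOnE$, exploiting the fact that the Gaussian integrand of $\IntTwO$ decays faster than the cubic rational one of $\IntOnE$, so the same case decomposition is available and the resulting estimates are at least as sharp. First I would complete the square in the exponent, writing
\[
\xi^{2}+[\cK{}(\xi+\tilde{R})-\tilde{M}]^{2}=(1+\cK{2})(\xi-\mu)^{2}+\nu,
\]
with $\mu=\cK{}(\tilde{M}-\cK{}\tilde{R})/(1+\cK{2})$ and $\nu=(\tilde{M}-\cK{}\tilde{R})^{2}/(1+\cK{2})$, and change variables $\zeta=\xi-\mu$. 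The new denominator becomes $\zeta+(\tilde{R}+\cK{}\tilde{M})/(1+\cK{2})$, and the sum of the shifted lower limit and this shifted offset equals $\tilde{L}+\tilde{R}>0$, so no singularity appears. Factoring out $e^{-\nu/2}$ reduces $\IntTwO$ to a sum (over $n$) of products of a pure Gaussian integral in $\zeta$ with the $n$-dependent weight $e^{-\nu/2}$.

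Next I would split into the three cases $c=\cS$, $c>\cS$, $c<\cS$. When $c=\cS$, $\cK{}=0$ simplifies matters: $\mu=0$, $\nu=\hat{M}^{2}$, and I would split the summation at $n_{*}=(\E{T})^{2}U^{2}/B_{1}$ (where $\hat{M}=1$). For $n\le n_{*}$, the factor $e^{-\hat{M}^{2}/2}$ provides super-polynomial decay; for $n>n_{*}$, the integral is bounded by $(\hat{L}+\hat{R})^{-1}\sqrt{2\pi}=O(\sqrt{n}/U)$, after which the tail sum $\sum n^{-1}$ contributes the logarithm. When $c>\cS$, $\cK{}<0$ and the residue $\tilde{M}-\cK{}\tilde{R}$ is bounded below by $K\sqrt{U}$ uniformly in $n\ge\EnOne$, so $e^{-\nu/2}\le e^{-KU/2}$ and the bound is immediate.

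The case $c<\cS$ is the main work. Following the split into $\Integral{4},\Integral{5},\Integral{6}$ from the proof of Lemma~\ref{ewrehgwew}, I would partition the summation into three ranges around $n^{*}=U/(\E{Y}-c\E{T})$, dispatching the two outer ranges by the super-polynomial smallness of $e^{-\nu/2}$ there. Inside the central range $|n-n^{*}|\le KU$, I would use explicit integration formulas analogous to Lemma~\ref{wqerthrjnmr} to evaluate $\int(\zeta+R)^{-1}e^{-(1+\cK{2})\zeta^{2}/2}d\zeta$, noting that the shifted offset $R$ is of order $\sqrt{U}$ uniformly there, and then sum against the Gaussian weights $e^{-\nu_{n}/2}$, whose effective width is of order $\sqrt{U}$.

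The main obstacle is securing the sharp $O(\ln U/U)$ bound in the central range of case $c<\cS$. A naive estimate (combining $n^{-3/2}\sim U^{-3/2}$, an integral bound of order $U^{-1/2}$, an effective count $O(\sqrt{U})$ of $n$ with non-negligible Gaussian weight, and the outer factor $U$) produces only $O(U^{-1/2})$, so the required logarithmic improvement must come from a sharper treatment of the Gaussian integral $\int(\zeta+R)^{-1}e^{-\zeta^{2}/2}d\zeta$ (where the apparent singularity at $\zeta=-R$ is suppressed by the Gaussian factor) combined with a careful reordering of the summation that picks out the weight $n^{-3/2}$ precisely at $n\sim n^{*}$; the book-keeping mimics the analysis of $\Integral{5}$ carried out in the deferred Section~\ref{fghrfhdfd}.
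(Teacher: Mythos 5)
Your opening reduction (completing the square, the change of variables $\zeta=\xi-\mu$, the observation that the shifted lower limit plus the shifted offset equals $\tilde{L}+\tilde{R}>0$, and factoring out $e^{-\nu/2}$) is exactly the paper's first move, and your treatment of $c>\cS$ via the uniform bound $\tilde{M}-\cK{}\tilde{R}\geqslant K\sqrt{U}$ is sound (modulo a crude uniform bound on the $\zeta$-integral, which is easy since $\tilde{L}+\tilde{R}>0$). The case $c=\cS$, however, fails as written. For $n>n_{*}$ you bound the integral by $(\hat{L}+\hat{R})^{-1}\sqrt{2\pi}$; since $\hat{L}+\hat{R}=\frac{\mcA^{2}+\mcC^{2}}{\mcA}\frac{U}{\sqrt{n}}$, this is of order $\sqrt{n}/U$, and then $U\sum_{n>n_{*}}n^{-3/2}\cdot\sqrt{n}/U=\sum_{n>n_{*}}n^{-1}$ \emph{diverges}: a tail of $1/n$ does not ``contribute the logarithm''. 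The device actually needed (the paper's Lemma~\ref{wertejht}, used in Sections~\ref{werfgwegfwe} and \ref{ertherhrhr}) is that the Gaussian mass sits near $\xi=0$, where the denominator is of order $\hat{R}\geqslant\mcB\sqrt{n}$, so the integral is $\underline{O}(\hat{R}^{-1})=\underline{O}(n^{-1/2})$ and $U\sum_{n>n_{*}}n^{-2}=\underline{O}(U^{-1})$. Likewise, on $\EnOne<n\leqslant n_{*}$ the factor $e^{-\hat{M}^{2}/2}$ is \emph{not} super-polynomially small: at $n\asymp U^{2}$ one has $\hat{M}\asymp 1$. With only an $O(1)$ bound on the integral this region contributes $O(1)$; one must again combine the exponential with the $\underline{O}(\hat{R}^{-1})$ bound and sum as in Section~\ref{werfgwegfwe}, which gives $\underline{O}(U^{-1})$ (no logarithm appears at $c=\cS$ in the paper's proof; the $\ln$ in the statement is slack).

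The case $c<\cS$ is not proved at all: you concede that in the central range $|n-U/(\E{Y}-c\E{T})|\leqslant KU$ your estimate yields only $O(U^{-1/2})$ and appeal to an unspecified ``sharper treatment'' and ``reordering of the summation''. That is precisely the step that has to be supplied, and your outline gives no indication of where a gain over $U^{-1/2}$ would come from: for the roughly $\sqrt{U}$ indices $n$ with $|\tilde{M}-\cK{}\tilde{R}|=O(1)$ the Gaussian weight is of order one, the lower limit is $\asymp-\sqrt{U}$ and the offset is $\asymp\sqrt{U}$, so each such term is genuinely of order $U^{-1}$ and a term-by-term count gives $U^{-1/2}$. (The paper itself only sketches this case, deferring the central-range computation to Section~\ref{fghrfhdfd} and ``the reader'', so you are not alone here; but as a standalone argument your proposal establishes the lemma only for $c>\cS$, and for $c=\cS$ only after the corrections above, while the decisive $c<\cS$ central-range estimate remains missing.)
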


As before, we prove first this lemma in the case $c=\cS$ and then in the case
$c\ne\cS$. In both cases, we use notation set in respective parts of the
proof of Lemma~\ref{ewrehgwew}.

\begin{proof}[Proof of Lemma~\ref{qqwehberhgert} for $c=\cS$]
Recall (see \eqref{sdfghffghjfgj}) that $\hat{L}>0$ for $n<\frac{B_2}{B_1}U$,
$\hat{L}<0$ for $n>\frac{B_2}{B_1}U$, that\footnote{Therefore, the integrand in
\eqref{3245terhr} does not contain singularities within the range of
integration. The unique point of singularity of the first factor lies to the
left of $\hat{L}$ since $-\hat{R}<\hat{L}$. The second factor is positive
everywhere.} $\hat{L}+\hat{R}>0$ for all $n$, and $\hat{M}>1$ for
$n<\frac{(\E{T})^2}{B_1}U^2$, $\hat{M}<1$ for $n>\frac{(\E{T})^2}{B_1}U^2$, and
consider $\IntTwoHat=\IntTwO\,\big|_{c=\cS}$, i.e.,
\begin{equation}\label{3245terhr}
\IntTwoHat=U\sum_{n>\EnOne}n^{-3/2}\exp\big\{-\tfrac12\hat{M}^2\big\}
\int_{\hat{L}}^{\infty}\big(\xi+\hat{R}\big)^{-1}\exp\big\{-\tfrac12\,\xi^2\big\}\,d\xi.
\end{equation}

It is easily seen that
\begin{equation}\label{swdrthyjh}
\IntTwoHat\leqslant K_1\Jntegral{1}+K_2\Jntegral{2},
\end{equation}
where\footnote{While using \eqref{ewrgteyherj} was essential, using of
\eqref{swdrthyjh} is largely for convenience: it emphasizes that $\hat{M}$ is
small for $n>\frac{(\E{T})^2}{B_1}U^2$, and the factor
$\exp\big\{-\tfrac12\hat{M}^2\big\}$ is unessential.}
\begin{equation*}
\begin{aligned}
\Jntegral{1}&=U\sum_{\EnOne<n<\frac{(\E{T})^2}{B_1}U^2}n^{-3/2}\exp\big\{-\tfrac12\hat{M}^2\big\}
\int_{\hat{L}}^{\infty}\big(\xi+\hat{R}\big)^{-1}\exp\big\{-\tfrac12\,\xi^2\big\}\,d\xi,
\\
\Jntegral{2}&=U\sum_{n>\frac{(\E{T})^2}{B_1}U^2}n^{-3/2}
\int_{\hat{L}}^{\infty}\big(\xi+\hat{R}\big)^{-1}\exp\big\{-\tfrac12\,\xi^2\big\}\,d\xi.
\end{aligned}
\end{equation*}
The asymptotic behavior of $\Jntegral{1}$, as $U\to\infty$, is checked as
required in Section~\ref{werfgwegfwe}. The asymptotic behavior of
$\Jntegral{2}$, as $U\to\infty$, is checked as required in
Section~\ref{ertherhrhr}. The proof is complete.
\end{proof}

\begin{proof}[Proof of Lemma~\ref{qqwehberhgert} for $c\ne\cS$]
As before (see \eqref{sdfghffghjfgj}), we put $U=u+cv$ and $\tilde{L}=\XatL$,
$\tilde{R}=\XatR$, $\tilde{M}=\XatM$. Rewrite \eqref{wqghjmtjrj} as
\begin{equation*}
\IntTwO=U\sum_{n=\EnOne}^{\infty}n^{-3/2}\int_{\tilde{L}}^{\infty}\big(\xi+\tilde{R}\big)^{-1}
\exp\big\{-\tfrac12\big[(1+\cK{2})\xi^2-2\cK{}(\tilde{M}-\cK{}\tilde{R})\xi
+(\tilde{M}-\cK{}\tilde{R})^2\big]\big\}\,d\xi
\end{equation*}
and, completing the square and making the change of variables, as
\begin{multline*}
\IntTwO=U\sum_{n=\EnOne}^{\infty}n^{-3/2}
\exp\bigg\{-\frac12\frac{(\tilde{M}-\cK{}\tilde{R})^2}{1+\cK{2}}\bigg\}
\int_{\tilde{L}-\cK{}\frac{\tilde{M}-\cK{}\tilde{R}}{1+\cK{2}}}^{\infty}
\bigg(\zeta+\frac{\tilde{R}+\cK{}\tilde{M}}{1+\cK{2}}\bigg)^{-1}
\\
\times
\exp\bigg\{-\frac{\sqrt{1+\cK{2}}}{2}\,\zeta^2\bigg\}\,d\zeta.
\end{multline*}
Since the exponential factor is easier to work, this expression is suitable for
its asymptotic analysis without its simplifying\footnote{Recall that dealing
with the analogue formula for $\IntOnE$ (see \eqref{sdaffjhfgmkf}), due to
technical complexities, we had to switch to certain upper bounds for
$\IntOnE$.}.

\medskip\paragraph{\it Case $c>\cS$}\label{sdtfyjuykm}

In this case, $\cK{}<0$. Recall that it yields $\tilde{M}-\cK{}\tilde{R}>0$ and
use the arguments outlined in the respective part of the proof of
Lemma~\ref{ewrehgwew}, for $c>\cS$. The asymptotic behavior of the integral
\begin{equation*}
\int_{\tilde{L}-\cK{}\frac{\tilde{M}-\cK{}\tilde{R}}{1+\cK{2}}}^{\infty}
\bigg(\zeta+\frac{\tilde{R}+\cK{}\tilde{M}}{1+\cK{2}}\bigg)^{-1}
\exp\bigg\{-\frac{\sqrt{1+\cK{2}}}{2}\,\zeta^2\bigg\}\,d\zeta
\end{equation*}
is examined by means of a direct extension, as it was done in
Section~\ref{werrtyjrtjrtrjk}, of Lemma~\ref{wertejht}. Using it, the
asymptotic behavior of $\IntTwO$, as $U\to\infty$, is easily checked as
required along the lines traced in Sections~\ref{werrtyjrtjrtrjk},
\ref{werfgwegfwe} and \ref{ertherhrhr}. The proof is complete.

\medskip\paragraph{\it Case $c<\cS$}\label{sfdghjtsdrf}

In this case, when $\cK{}>0$, used should be the arguments outlined in the
respective part of the proof of Lemma~\ref{ewrehgwew}, for $c<\cS$, with the
difference that integrals are analyzed along the lines traced in
Sections~\ref{werfgwegfwe} and \ref{ertherhrhr}. The proof is complete.
\end{proof}

\subsection{Step 5: further simplification of the main term of approximation}\label{asfdghjnm}

In Step 3 of the proof, the main term of approximation
$\MainApprox{t}(u,c\mid\T{1}=v)$ (see \eqref{dtfyjutykm}) was simplified up to
$\SeqApprox{t}{1}(u,c\mid\T{1}=v)$ (see \eqref{ewdtehfdj}). Let us further
simplify $\SeqApprox{t}{1}(u,c\mid\T{1}=v)$ up to the terms of allowed order of
smallness. We use for it core asymptotic analysis developed in Step 4. It is
noteworthy that in the rest of the proof this analysis is applied only to the
expressions of the second kind.

\subsection*{First step in processing (\ref{ewdtehfdj})}\label{sdrtgyjhkmg}

Rewrite \eqref{ewdtehfdj} as
\begin{multline*}\label{dtfyjutykmXX}
\SeqApprox{t}{1}(u,c\mid\T{1}=v)=\frac{(\E{Y})^{3/2}(u+cv)^{1/2}}{2\pi
c\sqrt{\D{T}\D{Y}}}\int_{0}^{\frac{c(t-v)}{u+cv}} \frac{1}{(1+x)^{3/2}}
\\[2pt]
\times\sum_{n=\EnOne}^{\infty}n^{-1/2}\sqrt{\frac{(u+cv)(1+x)}{n\E{Y}}}
\exp\big\{-\tfrac{1}{2}\big[\Lambda^2_{n}(u+cv,x)+\Delta_n^2(u+cv,x)\big]\big\}dx
\end{multline*}
and introduce
\begin{multline*}
\SeqApprox{t}{2}(u,c\mid\T{1}=v)=\frac{(\E{Y})^{3/2}(u+cv)^{1/2}}{2\pi
c\sqrt{\D{T}\D{Y}}}\int_{0}^{\frac{c(t-v)}{u+cv}} \frac{1}{(1+x)^{3/2}}
\\[2pt]
\times\,\sum_{n=\EnOne}^{\infty}n^{-1/2}\exp\big\{-\tfrac{1}{2}\big[\Lambda^2_{n}(u+cv,x)
+\Delta_n^2(u+cv,x)\big]\big\}dx.
\end{multline*}

Using Lemma~\ref{asdfhjnmkgh} which yields the identity
\begin{multline*}
1-\sqrt{\frac{(u+cv)(1+x)}{n\E{Y}}}\eqSic
\bigg\{\frac{\sqrt{B_4}}{\sqrt{B_1n}}\Lambda_{n}(u+cv,x)
\\[-4pt]
+\frac{B_3}{\E{Y}\sqrt{B_1n}}\Delta_{n}(u+cv,x)\bigg\}
\bigg(1+\sqrt{\frac{(u+cv)(1+x)}{n\E{Y}}}\,\bigg)^{-1},
\end{multline*}
we have to prove that
\begin{equation*}
\sup_{t>0}\,\Big|\,\SeqApprox{t}{1}(u,c\mid\T{1}=v)-\SeqApprox{t}{2}(u,c\mid\T{1}=v)\,\Big|=
\underline{O}\bigg(\dfrac{\ln(u+cv)}{u+cv}\bigg),
\end{equation*}
as $u+cv\to\infty$. It is done by means of core asymptotic analysis of the
expressions of the second kind described in Step~4. In particular, for this
purpose we have to prove that
\begin{multline*}
(u+cv)^{1/2}\int_{0}^{\frac{c(t-v)}{u+cv}} \frac{1}{(1+x)^{3/2}}
\sum_{n=\EnOne}^{\infty}n^{-1}\big(|\Lambda_{n}(u+cv,x)|
+|\Delta_{n}(u+cv,x)|\big)
\\[2pt]
\times\exp\big\{-\tfrac{1}{2}\big[\Lambda_n^2(u+cv,x)+\Delta^2_{n}(u+cv,x)\big]\big\}dx
=\underline{O}\bigg(\dfrac{\ln(u+cv)}{u+cv}\bigg),
\end{multline*}
as $u+cv\to\infty$. This standard check is left to the reader.

\subsection*{Second step in processing (\ref{ewdtehfdj})}\label{edthrjkty}

We write
\begin{multline*}
\SeqApprox{t}{3}(u,c\mid\T{1}=v)=\frac{(\E{Y})^{3/2}(u+cv)^{1/2}}{2\pi
c\sqrt{\D{T}\D{Y}}}\int_{0}^{\frac{c(t-v)}{u+cv}} \frac{1}{(1+x)^{3/2}}
\\
\times\exp\Big\{-\tfrac{1}{2}\Delta^2_{{\frac{(u+cv)(1+x)}{\E{Y}}}}(u+cv,x)\Big\}
\,\sum_{n=\EnOne}^{\infty}n^{-1/2}
\exp\big\{-\tfrac{1}{2}\Lambda_n^2(u+cv,x)\big\}dx.
\end{multline*}
We have to prove that
\begin{equation*}
\sup_{t>0}\,\Big|\,\SeqApprox{t}{2}(u,c\mid\T{1}=v)-\SeqApprox{t}{3}(u,c\mid\T{1}=v)\,\Big|=
\underline{O}\bigg(\dfrac{\ln(u+cv)}{u+cv}\bigg),
\end{equation*}
as $u+cv\to\infty$. It is done by means of core asymptotic analysis of the
expressions of the second kind described in Step~4. This standard check is left
to the reader.

\subsection*{Third step in processing (\ref{ewdtehfdj})}\label{weruyhjtyk}

Bearing in mind the identity\footnote{Note that
$1-\sqrt{1+x}=-x/2+x^2/8-x^3/16+\dots$} (see Lemma~\ref{asdffgfh})
\begin{equation*}
\Lambda_{n+1}(u+cv,x)-\Lambda_{n}(u+cv,x)=\left(\frac{B_1}{B_4n}\right)^{1/2}+\Lambda_{n+1}(u+cv,x)
\big(1-\sqrt{1+1/n}\big),
\end{equation*}
we prove by means of core asymptotic analysis of the expressions of the second
kind described in Step~4 that
\begin{equation*}
\sup_{t>0}\,\Big|\,\SeqApprox{t}{3}(u,c\mid\T{1}=v)-\SeqApprox{t}{4}(u,c\mid\T{1}=v)\,\Big|
=\underline{O}\bigg(\dfrac{\ln(u+cv)}{u+cv}\bigg),
\end{equation*}
as $u+cv\to\infty$, where
\begin{multline*}
\SeqApprox{t}{4}(u,c\mid\T{1}=v)=\frac{(u+cv)^{1/2}(\E{Y})^{3/2}}{\sqrt{2\pi}
c\sqrt{B_1}} \int_{0}^{\frac{c(t-v)}{(u+cv)}}\frac{1}{(1+x)^{3/2}}
\\
\times\exp\big\{-\tfrac{1}{2}
\big[\Delta^2_{\frac{(u+cv)(1+x)}{\E{Y}}}(u+cv,x)\big]\big\}
\frac{1}{\sqrt{2\pi}}\sum_{n=\EnOne}^{\infty}\big(\Lambda_{n+1}(u+cv,x)
\\
-\Lambda_{n}(u+cv,x)\big)
\exp\Big\{-\tfrac{1}{2}\big[\Lambda_n^2(u+cv,x)\big]\Big\}\,dx.
\end{multline*}
This standard check is left to the reader.

\subsection*{Fourth step in processing (\ref{ewdtehfdj})}\label{sfdgdhfjkg}

We finally note that $D^2=B_1/(\E{Y})^3$ and that
\begin{equation*}
\Delta^2_{\frac{(u+cv)(1+x)}{\E{Y}}}(u+cv,x)
=\dfrac{(x-(1+x)(c/\cS))^2}{c^2D^2\frac{(1+x)}{(u+cv)}}
=(u+cv)\dfrac{(x[1/c-1/\cS]-1/\cS)^2}{D^2(1+x)}.
\end{equation*}
By means of standard core asymptotic analysis of the expressions of the second
kind described in Step~4, we prove that
\begin{equation*}
\sup_{t>0}\,\Big|\,\SeqApprox{t}{4}(u,c\mid\T{1}=v)-\SeqApprox{t}{5}(u,c\mid\T{1}=v)\,\Big|
=\underline{O}\bigg(\dfrac{\ln(u+cv)}{u+cv}\bigg),
\end{equation*}
as $u+cv\to\infty$, where
\begin{multline*}
\SeqApprox{t}{5}(u,c\mid\T{1}=v)=\frac{(u+cv)^{1/2}}{\sqrt{2\pi c^2D^2}}
\int_{0}^{\frac{c(t-v)}{(u+cv)}}\frac{1}{(1+x)^{3/2}}
\\
\times\exp\bigg\{-\frac{1}{2}
\frac{(x-(1+x)(c/\cS))^2}{c^2D^2\frac{(1+x)}{(u+cv)}}\bigg\}\,dx,
\end{multline*}
which yields the required approximation. The proof of Theorem \ref{srdthjrf} is
complete.

\section{Main technicalities and auxiliary results}\label{wqedtfyjhtgfj}

In this section, we gather main auxiliary results used in Section
\ref{sdfyguklyi}.

\subsection{Non-uniform Berry-Esseen bounds in local CLT}\label{srdthrj}

Let the random vectors $\xi_{i}$, $i=1,2,\dots$, assuming values in $\R^{m}$ be
i.i.d. with c.d.f. $P$, with zero mean and with identity covariance matrix $I$.
Put $S_n=\frac{1}{\sqrt{n}}\sum_{i=1}^{n}\xi_{i}$, $\P_{n}(A)=\P\{S_{n}\in
A\}$, $A\subset\R^{m}$, $\p_n(x)=\frac{\partial^m}{\partial x_1,\dots,\partial
x_m}\P\{S_n\leqslant x\}$, $x=(x_1,\dots,x_m)\in\R^{m}$.

The Berry-Esseen bounds in one-dimensional, as $m=1$, central limit theorem
(CLT) are well known. The following theorem follows from Theorem~11 in Ch. 7,
\S~2 of \citeNP{[Petrov 1975]} proved for non-identically distributed random
variables $\xi_{i}$, $i=1,2,\dots$.
\begin{theorem}[\citeNP{[Petrov 1975]}]\label{qwretyuhjrk}
Let $\E\xi_1^2>0$, $\E|\xi_1|^3<\infty$, and
$\int_{|t|>\epsilon}|\E{e^{it\xi_1}}|^ndt=\underline{O}(n^{-1})$ for any fixed
$\epsilon>0$. Then for all sufficiently large $n$ a bounded p.d.f. $\p_n(x)$
exists and
\begin{equation*}
\sup_{x\in\R}\left|\,\p_n(x)-\Ugauss{0}{1}(x)\right|=\underline{O}(n^{-1/2}),\quad
n\to\infty.
\end{equation*}
\end{theorem}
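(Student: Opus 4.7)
The plan is to follow the classical Fourier-analytic route to a local CLT. Let $\hat f(t)=\E e^{it\xi_1}$ be the characteristic function of $\xi_1$. Under the tail hypothesis on $|\hat f|^n$ together with the local bound $|\hat f(t)|\le 1-ct^2$ for small $|t|$ (which uses $\E\xi_1^2>0$), the map $t\mapsto \hat f(t/\sqrt n)^n$ is integrable for all sufficiently large $n$, so Fourier inversion produces a bounded continuous density $\p_n$ of $S_n$ and furnishes the identity
\[
\p_n(x)-\Ugauss{0}{1}(x)=\frac{1}{2\pi}\int_{-\infty}^{\infty}e^{-itx}\bigl(\hat f(t/\sqrt n)^n-e^{-t^2/2}\bigr)\,dt.
\]
Estimating by absolute value under the integral, the task reduces to showing $\int|\hat f(t/\sqrt n)^n-e^{-t^2/2}|\,dt=\underline O(n^{-1/2})$, which is automatically uniform in $x$.

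Next I would split $\R$ into three regions, $|t|\le An^{1/6}$ (small), $An^{1/6}<|t|\le \delta\sqrt n$ (medium), and $|t|>\delta\sqrt n$ (large), with $A$ large and $\delta$ small enough that $|\hat f(s)|\le e^{-cs^2}$ for $|s|\le\delta$. In the small region, Taylor-expanding $\log\hat f$ at $0$ and using $\E\xi_1=0$, $\E\xi_1^2=1$, $\E|\xi_1|^3<\infty$ yields $\hat f(t/\sqrt n)^n=e^{-t^2/2}(1+O(|t|^3/\sqrt n))$; multiplying by $e^{-t^2/2}$ and integrating against $\int|t|^3 e^{-t^2/2}\,dt<\infty$ gives an $O(n^{-1/2})$ contribution. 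In the medium region, $|\hat f(t/\sqrt n)|^n\le e^{-ct^2}$, and its contribution against $e^{-t^2/2}$ is exponentially small in $n$. In the large region, the substitution $s=t/\sqrt n$ reduces the bound to $\sqrt n\int_{|s|>\delta}|\hat f(s)|^n\,ds=\sqrt n\cdot\underline O(n^{-1})=\underline O(n^{-1/2})$ by the tail hypothesis, while the Gaussian piece $\int_{|t|>\delta\sqrt n}e^{-t^2/2}dt$ is super-exponentially small.

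The main obstacle is the small-$t$ bookkeeping: combining the cubic Taylor remainder of $\log\hat f$ with $|e^z-1-z|\le \tfrac12|z|^2 e^{|z|}$ forces the cutoff $|t|\le An^{1/6}$ in order that the multiplicative factor $e^{C|t|^3/\sqrt n}$ stay bounded throughout the small region. Beyond this standard technicality, the argument reproduces the proof of Theorem 11 in Ch.~7, §2 of \citeNP{[Petrov 1975]}, which is the cited source and which treats the more general case of non-identically distributed summands; the statement above is the specialization to i.i.d. $\xi_i$ satisfying the third-moment and characteristic-function-tail hypotheses.
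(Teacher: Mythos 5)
Your proof is correct: the Fourier-inversion identity together with the three-zone split — Taylor expansion of $\log \E e^{it\xi_1/\sqrt{n}}$ for $|t|\leqslant An^{1/6}$, the bound $|\E e^{is\xi_1}|\leqslant e^{-cs^2}$ on the middle zone, and the hypothesis $\int_{|t|>\epsilon}|\E e^{it\xi_1}|^n\,dt=\underline{O}(n^{-1})$ applied after the rescaling $s=t/\sqrt{n}$ on the outer zone (which also yields integrability of the characteristic function of $S_n$ and hence existence of the bounded density) — closes to give $\underline{O}(n^{-1/2})$ uniformly in $x$. The paper itself offers no proof of this statement, quoting it as a specialization of Theorem~11 in Ch.~7, \S~2 of Petrov (1975); your reconstruction is in substance the same Fourier-analytic argument used there (stated in Petrov for non-identically distributed summands), so it matches the cited source's approach.
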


The non-uniform Berry-Esseen bounds in integral rather than local
one-dimensional CLT may be found in \citeNP{[Petrov 1995]} (see, e.g., Theorems
15 and 14 in Ch. 5, \S~6 in \citeNP{[Petrov 1995]}).

A detailed study of normal approximations and asymptotic expansions in the CLT
in $\R^{m}$, as $m>1$, is conducted in \citeNP{[Bhattacharya Rao 1976]} (see
particularly Theorem 19.2 in \citeNP{[Bhattacharya Rao 1976]}. The non-uniform
Berry-Esseen bounds in $\R^{m}$, $m>1$, that is used in
Section~\ref{sdfyguklyi} as auxiliary result, is Theorem~4 in \S~3 of
\citeNP{[Dubinskaite 1982]} with $k=m$ and $s=2$. We first formulate the
following conditions.

\emph{Condition} ($P_m$): there exists $N\geqslant 1$ such that
$\sup_{x\in\R^m}p_N(x)\leqslant C<\infty$ and
\begin{equation*}
\int_{\|x\|>\sqrt{n}}\|x\|^{2}P(dx)+\frac{1}{n}\int_{\|x\|\leqslant\sqrt{n}}\|x\|^{4}P(dx)
+\frac{1}{\sqrt{n}}\sup_{\|e\|=1}\Big|\int_{\|x\|\leqslant\sqrt{n}}(x,e)^{3}P(dx)\Big|
=\underline{O}(\epsilon_n),
\end{equation*}
$n\to\infty$, where $\epsilon_n$ is a sequence of positive numbers such that
$\epsilon_n\to 0$, as $n\to\infty$, and $\epsilon_n\geqslant 1/\sqrt{n}$.

\emph{Condition} ($A_2$): $\beta_{2}=\E\|\xi_1\|^2<\infty$,
$\alpha_1(t)=\E(\xi_1,t)<\infty$.

\begin{theorem}[\citeNP{[Dubinskaite 1982]}]\label{w4e5y46yu43}
To have
\begin{equation}\label{ewyjhfyhm}
(1+\|x\|)^3\left|\,\p_n(x)-\Ugauss{0}{I}(x)\right|=\underline{O}(n^{-1/2}),\quad
n\to\infty,
\end{equation}
it is necessary and sufficient that conditions $(P_m)$, $(A_2)$, and
\begin{equation*}
z\int_{\|x\|>z}\|x\|^{2}P(dx)+\sup_{\|e\|=1}\Big|\int_{\|x\|\leqslant
z}(x,e)^{3}P(dx)\Big|=\underline{O}(1),\quad z\to\infty,
\end{equation*}
be satisfied.
\end{theorem}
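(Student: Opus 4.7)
The plan is to prove the two directions separately, following a classical Fourier-analytic route. For sufficiency, I would start from the inversion formula
\begin{equation*}
\p_n(x)-\Ugauss{0}{I}(x)=(2\pi)^{-m}\int_{\R^m}e^{-i(t,x)}\bigl[\hat{p}_n(t)-e^{-\|t\|^{2}/2}\bigr]\,dt,
\end{equation*}
where $\hat{p}_n(t)=\bigl(\E\,e^{i(t,\xi_1)/\sqrt{n}}\bigr)^{n}$. The weight $(1+\|x\|)^{3}$ on the left corresponds, after integration by parts in $t$, to differentiating the bracket up to three times; so the whole problem is reduced to bounding, uniformly in $x$, the $L^{1}(dt)$-norm of mixed partials of order $\leqslant 3$ of $\hat{p}_n(t)-e^{-\|t\|^{2}/2}$ by $O(n^{-1/2})$. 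This is why a genuinely non-uniform rate appears and why precisely the third moment (in its truncated form) enters the hypothesis.

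I would split the $t$-integral into a central zone $\|t\|\leqslant \delta\sqrt{n}$, an intermediate zone $\delta\sqrt{n}<\|t\|\leqslant \epsilon_0\sqrt{n}$ for small $\epsilon_0$, and a tail zone $\|t\|>\epsilon_0\sqrt{n}$. In the central zone, I would expand $\log\E\,e^{i(t,\xi_1)/\sqrt{n}}$ using $(A_2)$ and the \emph{truncated} third-moment condition: writing the characteristic function as $1+\psi(t/\sqrt{n})$ and Taylor-expanding, the cubic term is handled by truncating $\xi_1$ at level $\sqrt{n}$ and using the bounds
\begin{equation*}
z\int_{\|x\|>z}\|x\|^{2}P(dx)+\sup_{\|e\|=1}\Bigl|\int_{\|x\|\leqslant z}(x,e)^{3}P(dx)\Bigr|=\underline{O}(1),
\end{equation*}
which is exactly what is needed to push the cubic remainder into $O(n^{-1/2})$ after integration, and likewise for each differentiated version (the derivatives just insert extra polynomial factors that are absorbed by the Gaussian factor $e^{-\|t\|^{2}/2}$ arising in the Cramér-type expansion). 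In the tail zone, condition $(P_m)$ with the existence of $N$ such that $\sup_x p_N(x)\leqslant C$ gives $\int_{\R^m}|\hat{p}(t)|^{N}\,dt<\infty$ and $|\hat{p}(t)|\leqslant q<1$ off any neighbourhood of $0$, so the tail contribution decays exponentially in $n$; the intermediate zone is handled by a standard Cramér lemma estimate $|\hat{p}(t/\sqrt{n})|^{n}\leqslant \exp(-cn)$, again exponentially small.

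For necessity, I would argue that the local non-uniform rate already forces the stated hypotheses. Smoothness (condition $(P_m)$) is immediate, since \eqref{ewyjhfyhm} yields $\sup_x\p_n(x)<\infty$ for $n$ large and hence $\int|\hat{p}(t)|^{n}\,dt<\infty$ for those $n$; finiteness of $\beta_2$ (condition $(A_2)$) follows by integrating the local bound against $\|x\|^{2}$ over a growing ball and passing to the limit; and the truncation condition is obtained by testing \eqref{ewyjhfyhm} against $(1+\|x\|)^{-3}$-type tail functions and relating $n$ and $z$ through $z\asymp \sqrt{n}$, so that the inequality for $\p_n$ converts into the required $\underline{O}(1)$ estimate on the truncated third moment and on $z\int_{\|x\|>z}\|x\|^{2}\,P(dx)$.

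The main obstacle, and the whole point of the theorem, is the central zone in sufficiency: one must avoid assuming a finite global third moment and instead run a careful Edgeworth-type expansion with a truncated third moment, showing that every one of the $\leqslant 3$ $t$-derivatives picked up by integration by parts is still $O(n^{-1/2})$ in $L^{1}(dt)$ after multiplication by the Gaussian factor. Once this delicate bookkeeping is in place, the other zones and the necessity direction are comparatively routine.
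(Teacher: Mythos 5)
The paper does not prove this statement at all: Theorem~\ref{w4e5y46yu43} is imported verbatim as an auxiliary known result (Theorem~4 in \S~3 of Dubinskaite (1982), specialized to $k=m$, $s=2$), so there is no internal proof to compare with, and your sketch has to be judged as a self-contained proof of a research-level characterization theorem. Judged that way, it has genuine gaps. On sufficiency: after your integration by parts you need third-order $t$-derivatives of $\hat{p}_n(t)=\bigl(\E e^{i(t,\xi_1)/\sqrt{n}}\bigr)^n$, and these simply do not exist under the stated hypotheses, since no global third moment is assumed; the standard repair is to truncate the summands at level $\sqrt{n}$ \emph{before} passing to Fourier transforms, and then the quantities you must control are exactly the three truncated moments appearing in condition $(P_m)$ --- in particular the truncated fourth moment $\tfrac{1}{n}\int_{\|x\|\leqslant\sqrt{n}}\|x\|^{4}P(dx)$, which your outline never invokes but which is indispensable for the Taylor remainder of the truncated characteristic function, and the term $\int_{\|x\|>\sqrt{n}}\|x\|^{2}P(dx)$, which governs the cost of replacing the truncated sum by the original one and must be weighed against the factor $(1+\|x\|)^{3}$. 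Your central-zone bookkeeping is therefore not merely ``delicate'' but incomplete as described: condition $(P_m)$ enters the argument structurally, not only through boundedness of $p_N$ in the tail zone.

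The necessity half is dismissed as comparatively routine, and that is where the claim really fails. The conditions to be recovered are statements about the law $P$ of a \emph{single} summand, while \eqref{ewyjhfyhm} is a bound on $\bigl|\p_n(x)-\Ugauss{0}{I}(x)\bigr|$ for the normalized sums; there is no direct mechanism by which ``testing \eqref{ewyjhfyhm} against $(1+\|x\|)^{-3}$-type tail functions with $z\asymp\sqrt{n}$'' yields either $z\int_{\|x\|>z}\|x\|^{2}P(dx)=\underline{O}(1)$ or the uniform-in-$e$ bound on the \emph{signed} truncated third moments $\int_{\|x\|\leqslant z}(x,e)^{3}P(dx)$, whose cancellations cannot be read off from a one-sided density bound without relating the skewness term of an Edgeworth-type expansion to the approximation error from both sides; this direction is where much of the labor in Dubinskaite's paper lies. (Also note that in the setting adopted here $\E\|\xi_1\|^{2}<\infty$ is already part of the model, since the $\xi_i$ are assumed standardized with identity covariance, so ``deriving'' $(A_2)$ by integrating the local bound is vacuous rather than a step of the proof.) In short, the sufficiency outline is in the right spirit but omits the truncation architecture that makes $(P_m)$ necessary to quote, and the necessity direction is asserted rather than proved.
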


\begin{remark}
It is known that the estimate \eqref{ewyjhfyhm} is optimal in terms of
dependence on $\|x\|$, i.e., the power $3$ in \eqref{ewyjhfyhm} can not be
replaced by a greater one.
\end{remark}

\subsection{Large deviations for sums of i.i.d. r.v.}\label{wqeujmyr}

The following theorem is Corollary 2 in \citeNP{[Nagaev 1965]} (see also
\citeNP{[Nagaev 1979]}).

\begin{lemma}\label{q1werhewef}
Let $\xi_i$, $i=1,2,\dots$, be i.i.d. random variables such that $\E\xi_1=0$
and $\D\xi=1$. If $c_m=\E|\xi_i|^m<\infty$ with $m>2$, then for
$x>4\sqrt{n\max\big[\ln\big(\frac{n^{m/2-1}}{K_m c_m}\big),0\big]}$
\begin{equation*}
\P\Bigg\{\sum_{i=1}^n\xi_i>x\Bigg\}<\frac{B_m c_m n}{x^m},
\end{equation*}
where $K_m=1+(m+1)^{m+2}e^{-m}$, and $B_m$ is an absolute constant depending
only on $m$.
\end{lemma}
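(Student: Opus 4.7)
The plan is to follow Nagaev's classical truncation-plus-Chernoff scheme. First I would split each summand as $\xi_i=\xi_i'+\xi_i''$, where $\xi_i'=\xi_i\mathbf{1}_{\{|\xi_i|\le y\}}$ for a truncation level $y>0$ to be chosen later (in the end $y$ will be proportional to $x/m$). Since $\E\xi_i=0$, we have $\E\xi_i'=-\E\xi_i''$, and Markov's inequality gives $|\E\xi_i''|\le c_m/y^{m-1}$ and $\P\{|\xi_1|>y\}\le c_m/y^m$. On the event $\{\max_i|\xi_i|\le y\}$ the sum coincides with the truncated sum, which yields the basic split
$$\P\Big\{\sum_{i=1}^n\xi_i>x\Big\}\le n\,\P\{|\xi_1|>y\}+\P\Big\{\sum_{i=1}^n(\xi_i'-\E\xi_i')>x_0\Big\},\qquad x_0:=x-n|\E\xi_i''|.$$
The first term is already $\le n c_m/y^m$, which for $y$ proportional to $x$ is of the target order $B_m c_m n/x^m$.

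For the truncated sum I would apply the exponential Chernoff bound. Using $|\xi_i'-\E\xi_i'|\le 2y$ and $\D\xi_i'\le 1$, one obtains the standard moment-generating estimate
$$\E\exp\{t(\xi_i'-\E\xi_i')\}\le\exp\bigl\{\tfrac{1}{2}t^2\,e^{ty}\bigr\},\qquad t>0,$$
from a Taylor expansion in which the remainder beyond order $m$ is controlled by $c_m$; this is where the explicit constant $K_m=1+(m+1)^{m+2}e^{-m}$ enters, since it bookkeeps the tail of that expansion. Multiplying over $i$ and applying Markov yields
$$\P\Big\{\sum_{i=1}^n(\xi_i'-\E\xi_i')>x_0\Big\}\le\exp\bigl\{-tx_0+\tfrac{1}{2}nt^2e^{ty}\bigr\}.$$

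Next I would calibrate $y$ and $t$ simultaneously: choosing $y=x/(m\alpha)$ for a suitable absolute $\alpha$ and $t=y^{-1}\ln\bigl(x^m/(K_m c_m n)\bigr)$, the factor $e^{ty}$ becomes $x^m/(K_m c_m n)$, so the exponent becomes approximately $-m\ln\bigl(x^m/(K_m c_m n)\bigr)$ plus a controllable correction. The hypothesis $x>4\sqrt{n\max[\ln(n^{m/2-1}/(K_m c_m)),0]}$ is exactly what is needed to make this Chernoff exponent at least $m\ln\bigl(x^m/(c_m n)\bigr)$, so that the Chernoff contribution is bounded by a constant multiple of $c_m n/x^m$; below that threshold the Gaussian regime would dominate the polynomial one and the inequality would fail to be sharp. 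Adding the two bounds and absorbing constants gives $\P\{\sum\xi_i>x\}<B_m c_m n/x^m$ with $B_m$ depending only on $m$.

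The main obstacle is the joint optimization of the truncation level $y$ and the Chernoff parameter $t$, together with a careful bookkeeping of the Taylor remainder for $\E\exp\{t\xi_i'\}$ out to order $m+1$ — this is what produces the specific shape of $K_m$ and fixes the numerical constant $4$ appearing in the lower bound on $x$. A minor secondary issue is that the recentering correction $n|\E\xi_i''|\le n c_m/y^{m-1}$ must be verified to be smaller than, say, $x/2$ in the admissible range of $x$, so that one can safely replace $x_0$ by $x/2$ in the final Chernoff bound; this is routine but must be done explicitly to obtain the advertised constants.
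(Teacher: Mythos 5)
First, note that the paper itself does not prove this statement: Lemma~3.1 is imported verbatim as Corollary~2 of Nagaev (1965) (see also Nagaev 1979), so what you are really attempting is a reconstruction of Nagaev's own argument. Your general scheme --- truncation at a level $y\asymp x/m$, the split $\P\{\sum_i\xi_i>x\}\le n\P\{|\xi_1|>y\}+\P\{\sum_i(\xi_i'-\E\xi_i')>x_0\}$, and a Chernoff bound for the truncated, recentred sum --- is the right family of arguments.

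However, the key quantitative step fails as written. The moment bound you display, $\E\exp\{t(\xi_i'-\E\xi_i')\}\le\exp\{\tfrac12 t^2e^{ty}\}$, uses only $\D\xi_i'\le 1$ and the truncation; it contains no $c_m$ at all, so your claim that $K_m$ enters there through a Taylor remainder controlled by $c_m$ is inconsistent with the bound itself. More seriously, with your calibration $t=y^{-1}\ln\big(x^m/(K_mc_mn)\big)$ and $y=x/(m\alpha)$, the correction term equals $\tfrac{n}{2}t^2e^{ty}=\tfrac{m^2\alpha^2}{2K_mc_m}\,x^{m-2}\ln^2\big(x^m/(K_mc_mn)\big)$, which for large $x$ dwarfs $tx_0\asymp m\alpha\ln\big(x^m/(K_mc_mn)\big)$; it is not a ``controllable correction'', and the claimed exponent of order $-m\ln\big(x^m/(c_mn)\big)$ is never reached. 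To make the scheme work one needs an exponential estimate in which the dangerous factor $e^{ty}$ is multiplied by a quantity of order $c_mt^m$ (the $m$-th--moment term of the expansion, the lower-order terms being kept as a genuine quadratic), so that the choice $e^{ty}\asymp x^m/(c_mn)$ yields an $O(1)$ correction; this is exactly where Nagaev's constant $K_m=1+(m+1)^{m+2}e^{-m}$ and the factor $4$ in the admissible range of $x$ originate, and it is the part you have only gestured at. Finally, two points you call routine in fact require a preliminary case split: the recentring estimate $nc_m/y^{m-1}\le x/2$ and the domination of the Gaussian-regime exponential by $B_mc_mn/x^m$ do not follow from the hypothesis on $x$ alone when $c_mn\gtrsim x^m$ --- in that regime the asserted inequality is trivial because its right-hand side exceeds one for a suitable $B_m$, and this regime must be dispatched separately before the calibration above can be run.
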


\subsection{Fundamental identities}\label{werwhteyjn}

For $B_1=(\E{T})^2\D{Y}+(\E{Y})^2\D{T}$, $B_2=\E{Y}\D{T}$, $B_3=\E{T}\D{Y}$,
and $B_4=\D{Y}\D{T}$, we use notation
\begin{equation}\label{wqerertjr4}
\begin{aligned}
\mathcal{Y}_{n}(\OneVar)&=\frac{\OneVar-n\E{Y}}{\sqrt{n\D{Y}}},&\mathcal{T}_{n}(\TwoVar)&=\frac{\TwoVar-n\E{T}}{\sqrt{n\D{T}}},
\\
\Delta_{n}(\OneVar,\TwoVar)&=\dfrac{\TwoVar\E{Y}-\OneVar\E{T}}{\sqrt{B_1n}},&
\Lambda_{n}(\OneVar,\TwoVar)&=\dfrac{B_1n-(B_2\OneVar+B_3\TwoVar)}{\sqrt{B_1B_4n}}.
\end{aligned}
\end{equation}

\begin{lemma}\label{erwtjytk}
We have the identity
\begin{equation*}
\mathcal{Y}^2_{n}(\OneVar)+\mathcal{T}^2_{n}(\TwoVar)=\Delta^2_{n}(\OneVar,\TwoVar)+\Lambda_n^2(\OneVar,\TwoVar).
\end{equation*}
\end{lemma}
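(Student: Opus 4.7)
The identity is a direct algebraic consequence of the definitions; the structural reason behind it is that the linear change of coordinates $(\mathcal{Y}_n,\mathcal{T}_n)\mapsto(\Delta_n,\Lambda_n)$ is an orthogonal transformation (up to sign), and orthogonal transformations preserve the Euclidean norm.

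First I would invert the definitions of $\mathcal{Y}_n(\OneVar)$ and $\mathcal{T}_n(\TwoVar)$ to write $\OneVar=n\E Y+\sqrt{n\D Y}\,\mathcal{Y}_n$ and $\TwoVar=n\E T+\sqrt{n\D T}\,\mathcal{T}_n$, then substitute into $\Delta_n(\OneVar,\TwoVar)$ and $\Lambda_n(\OneVar,\TwoVar)$. In $\Delta_n$ the constant terms $n\,\E Y\,\E T$ cancel, and after dividing by $\sqrt{B_1 n}$ one obtains
\[
\Delta_n=\frac{\E Y\sqrt{\D T}\,\mathcal{T}_n-\E T\sqrt{\D Y}\,\mathcal{Y}_n}{\sqrt{B_1}}.
\]
In $\Lambda_n$ the key algebraic input is the decomposition $B_2\E Y+B_3\E T=(\E Y)^2\D T+(\E T)^2\D Y=B_1$, which is immediate from the definitions of $B_1,B_2,B_3$ and which cancels the leading $B_1 n$ term; after dividing by $\sqrt{B_1B_4n}=\sqrt{B_1}\sqrt{\D Y\D T}\sqrt{n}$ and factoring $\sqrt{\D Y\D T}$ out of the remaining numerator one gets
\[
\Lambda_n=-\frac{\E Y\sqrt{\D T}\,\mathcal{Y}_n+\E T\sqrt{\D Y}\,\mathcal{T}_n}{\sqrt{B_1}}.
\]

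Setting $c_1=\E T\sqrt{\D Y}/\sqrt{B_1}$ and $c_2=\E Y\sqrt{\D T}/\sqrt{B_1}$, the relation $B_1=(\E T)^2\D Y+(\E Y)^2\D T$ gives $c_1^2+c_2^2=1$, so the map $(\mathcal{Y}_n,\mathcal{T}_n)\mapsto(\Delta_n,\Lambda_n)=(-c_1\mathcal{Y}_n+c_2\mathcal{T}_n,\,-c_2\mathcal{Y}_n-c_1\mathcal{T}_n)$ is orthogonal and therefore preserves the sum of squares. There is no real obstacle here: the identity could equally well be verified by brute-force expansion of both sides and comparison of coefficients of $\mathcal{Y}_n^2$, $\mathcal{T}_n^2$ and $\mathcal{Y}_n\mathcal{T}_n$, the single nontrivial ingredient being the decomposition $B_1=B_2\E Y+B_3\E T$.
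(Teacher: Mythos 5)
Your proof is correct and is essentially the paper's own route: the paper simply asserts that the identity follows by completing the square on the left-hand side or by a straightforward algebraic check, and your computation supplies exactly that check, using the same key cancellation $B_2\E Y+B_3\E T=B_1$. Recasting the verification as the statement that $(\mathcal{Y}_{n},\mathcal{T}_{n})\mapsto(\Delta_{n},\Lambda_{n})$ is an orthogonal (norm-preserving) linear map is a clean way to package the same algebra, not a different argument.
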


\begin{proof}
Getting of this identity is based on algebraic manipulations with the left-hand
side, aimed at completing the square. Its proof may be done as well by means of
a straightforward check.
\end{proof}

\begin{lemma}\label{asdffgfh}
We have the identity
\begin{equation*}
\Lambda_{n+1}(\OneVar,\TwoVar)-\Lambda_{n}(\OneVar,\TwoVar)=\left(\frac{B_1}{B_4n}\right)^{1/2}
+\Lambda_{n+1}(\OneVar,\TwoVar)\big(1-\sqrt{1+1/n}\big).
\end{equation*}
\end{lemma}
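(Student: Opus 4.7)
The identity is purely algebraic, so my plan is a direct verification rather than anything clever. The key observation is that $\sqrt{1+1/n}=\sqrt{(n+1)/n}$, which converts the $\sqrt{n+1}$ in the denominator of $\Lambda_{n+1}$ into $\sqrt{n}$, matching the denominator of $\Lambda_{n}$ and of $(B_1/(B_4n))^{1/2}$.

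Concretely, first I would write out
\begin{equation*}
\Lambda_{n+1}(\OneVar,\TwoVar)\sqrt{1+1/n}
=\frac{B_1(n+1)-(B_2\OneVar+B_3\TwoVar)}{\sqrt{B_1B_4(n+1)}}\cdot\sqrt{\frac{n+1}{n}}
=\frac{B_1(n+1)-(B_2\OneVar+B_3\TwoVar)}{\sqrt{B_1B_4n}},
\end{equation*}
so that
\begin{equation*}
\Lambda_{n+1}(\OneVar,\TwoVar)\bigl(1-\sqrt{1+1/n}\bigr)
=\Lambda_{n+1}(\OneVar,\TwoVar)-\frac{B_1(n+1)-(B_2\OneVar+B_3\TwoVar)}{\sqrt{B_1B_4n}}.
\end{equation*}

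Next, noting that $(B_1/(B_4n))^{1/2}=B_1/\sqrt{B_1B_4n}$, I would add this to the previous line and combine the two fractions over the common denominator $\sqrt{B_1B_4n}$. The numerator becomes $B_1-[B_1(n+1)-(B_2\OneVar+B_3\TwoVar)]=-B_1n+(B_2\OneVar+B_3\TwoVar)$, which is exactly $-\sqrt{B_1B_4n}\,\Lambda_{n}(\OneVar,\TwoVar)$. Therefore the right-hand side of the claimed identity simplifies to $\Lambda_{n+1}(\OneVar,\TwoVar)-\Lambda_{n}(\OneVar,\TwoVar)$, which is the left-hand side.

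There is no real obstacle here: the only thing to watch is keeping track of the two different square-root denominators $\sqrt{B_1B_4n}$ and $\sqrt{B_1B_4(n+1)}$, and the algebraic identity $\sqrt{1+1/n}=\sqrt{(n+1)/n}$ is precisely what collapses them onto a single denominator. Alternatively, one could simply move $\Lambda_{n+1}(\OneVar,\TwoVar)\sqrt{1+1/n}$ to the left, which reduces the claim to checking $\Lambda_{n+1}\sqrt{1+1/n}-\Lambda_{n}=(B_1/(B_4n))^{1/2}$, and verify this by substituting the definitions and clearing denominators.
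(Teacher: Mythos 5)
Your verification is correct, and it is essentially the same algebraic argument as the paper's: both reduce the identity to the observation that $\sqrt{1+1/n}$ rescales $\Lambda_{n+1}$ onto the common denominator $\sqrt{B_1B_4n}$, where the numerators differ from those of $\Lambda_n$ by exactly $B_1$ (the paper works from the left-hand side after multiplying through by $\sqrt{B_1B_4n}$, you work from the right-hand side, which is an immaterial difference).
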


\begin{proof}
We have
\begin{multline*} (B_1B_4n)^{1/2}[\Lambda_{n+1}(\OneVar,\TwoVar)-\Lambda_{n}(\OneVar,\TwoVar)]
\\
=\big\{(B_1B_4(n+1))^{1/2}\Lambda_{n+1}(\OneVar,\TwoVar)-(B_1B_4n)^{1/2}\Lambda_{n}(\OneVar,\TwoVar)\big\}
\\
+\big\{(B_1B_4n)^{1/2}\Lambda_{n+1}(\OneVar,\TwoVar)-(B_1B_4(n+1))^{1/2}\Lambda_{n+1}(\OneVar,\TwoVar)\big\}
\\
=B_1+\Lambda_{n+1}(\OneVar,\TwoVar)(B_1B_4n)^{1/2}\big(1-\sqrt{1+1/n}\big).
\end{multline*}
Indeed, since
\begin{equation*}
(B_1B_4(n+1))^{1/2}\Lambda_{n+1}(\OneVar,\TwoVar)=B_1(n+1)-(B_2\OneVar+B_3\TwoVar),
\end{equation*}
\begin{equation*}
(B_1B_4n)^{1/2}\Lambda_{n}(\OneVar,\TwoVar)=B_1n-(B_2\OneVar+B_3\TwoVar),
\end{equation*}
the first summand is
\begin{multline*}
(B_1B_4(n+1))^{1/2}\Lambda_{n+1}(\OneVar,\TwoVar)-(B_1B_4n)^{1/2}\Lambda_{n}(\OneVar,\TwoVar)
\\
=B_1(n+1)-(B_2\OneVar+B_3\TwoVar)-B_1n+(B_2\OneVar+B_3\TwoVar)=B_1.
\end{multline*}
The second summand is
\begin{multline*}
(B_1B_4n)^{1/2}\Lambda_{n+1}(\OneVar,\TwoVar)-(B_1B_4(n+1))^{1/2}\Lambda_{n+1}(\OneVar,\TwoVar)
\\[4pt]
=(B_1B_4n)^{1/2}\Lambda_{n+1}(\OneVar,\TwoVar)\bigg\{1-\frac{(B_1B_4(n+1))^{1/2}}{(B_1B_4n)^{1/2}}\bigg\}
\\[2pt]
=(B_1B_4n)^{1/2}\Lambda_{n+1}(\OneVar,\TwoVar)\bigg\{1-\sqrt{\frac{(n+1)}{n}}\,\bigg\}.
\end{multline*}
The proof is complete.
\end{proof}

\begin{lemma}\label{asdfhjnmkgh}
We have the identities
\begin{equation*}
1-\frac{\OneVar}{n\E{Y}}=\frac{\sqrt{B_4}}{\sqrt{B_1n}}\Lambda_{n}(\OneVar,\TwoVar)
+\frac{B_3}{\E{Y}\sqrt{B_1n}}\Delta_{n}(\OneVar,\TwoVar)
\end{equation*}
and
\begin{equation*}
1-\sqrt{\frac{\OneVar}{n\E{Y}}}=
\bigg\{\frac{\sqrt{B_4}}{\sqrt{B_1n}}\Lambda_{n}(\OneVar,\TwoVar)
+\frac{B_3}{\E{Y}\sqrt{B_1n}}\Delta_{n}(\OneVar,\TwoVar)\bigg\}\bigg(1+\sqrt{\frac{\OneVar}{n\E{Y}}}\bigg)^{-1}.
\end{equation*}
\end{lemma}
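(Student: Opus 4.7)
The plan is to verify the first identity by a direct algebraic computation from the definitions, and then deduce the second identity from the first via the elementary factorization $1-a=(1-\sqrt{a})(1+\sqrt{a})$.

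First I would expand the right-hand side of the first identity using the definitions \eqref{wqerertjr4}. The coefficient of $\Lambda_n$ contributes
\begin{equation*}
\frac{\sqrt{B_4}}{\sqrt{B_1 n}}\cdot\frac{B_1 n-(B_2\OneVar+B_3\TwoVar)}{\sqrt{B_1 B_4 n}}
=\frac{B_1 n-(B_2\OneVar+B_3\TwoVar)}{B_1 n},
\end{equation*}
while the coefficient of $\Delta_n$ contributes
\begin{equation*}
\frac{B_3}{\E{Y}\sqrt{B_1 n}}\cdot\frac{\TwoVar\E{Y}-\OneVar\E{T}}{\sqrt{B_1 n}}
=\frac{B_3(\TwoVar\E{Y}-\OneVar\E{T})}{\E{Y}\,B_1 n}.
\end{equation*}
Adding these two expressions and collecting terms, the $\TwoVar$-contributions cancel (both give $\mp B_3\E{Y}\TwoVar/(\E{Y}B_1 n)$), and the $\OneVar$-contributions combine into $-\OneVar(\E{Y}B_2+\E{T}B_3)/(\E{Y}B_1 n)$. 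The point where one uses the specific form of $B_1$ is the identity
\begin{equation*}
\E{Y}B_2+\E{T}B_3=(\E{Y})^2\D{T}+(\E{T})^2\D{Y}=B_1,
\end{equation*}
which is immediate from the definitions of $B_1,B_2,B_3$. Hence the right-hand side reduces to $1-\OneVar/(n\E{Y})$, proving the first identity.

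For the second identity, setting $a=\OneVar/(n\E{Y})>0$ and using $1-a=(1-\sqrt{a})(1+\sqrt{a})$, I obtain
\begin{equation*}
1-\sqrt{\tfrac{\OneVar}{n\E{Y}}}=\Big(1-\tfrac{\OneVar}{n\E{Y}}\Big)\Big(1+\sqrt{\tfrac{\OneVar}{n\E{Y}}}\Big)^{-1}.
\end{equation*}
Substituting the first identity into the numerator on the right-hand side yields the second identity exactly as stated. There is no real obstacle here: both parts are routine algebraic manipulations, and the only non-trivial input is the observation that $\E{Y}B_2+\E{T}B_3=B_1$, which is evident by direct substitution.
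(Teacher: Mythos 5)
Your proposal is correct and is essentially the paper's argument: both reduce to the same direct algebra, with the key relation $B_1=\E{Y}B_2+\E{T}B_3$ (the paper uses it in the equivalent form $B_1/\E{Y}-B_2=(\E{T})^2\D{Y}/\E{Y}$, rearranging $\Lambda_n$ rather than expanding the right-hand side, but the computation is the same run in the opposite direction). The deduction of the second identity from the first via $1-a=(1-\sqrt{a})(1+\sqrt{a})$ is exactly what the paper leaves implicit.
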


\begin{proof}
Bearing in mind that $B_1/\E{Y}-B_2=(\E{T})^2\D{Y}/\E{Y}$, we have
\begin{multline*}
\Lambda_{n}(\OneVar,\TwoVar)=\left(1-\frac{\OneVar}{\E{Y}n}\right)\left(\dfrac{B_1n}{B_4}\right)^{1/2}+\frac{\OneVar
B_1/\E{Y}}{\sqrt{B_1B_4n}} -\dfrac{B_2\OneVar+B_3\TwoVar}{\sqrt{B_1B_4n}}
\\
=\left(1-\frac{\OneVar}{\E{Y}n}\right)\left(\dfrac{B_1n}{B_4}\right)^{1/2}
-\frac{\E{T}\D{Y}}{\E{Y}\sqrt{B_4}}\left(\frac{\TwoVar\E{Y}-\OneVar\E{T}}{\sqrt{B_1n}}\right)
\\
=\left(1-\frac{\OneVar}{\E{Y}n}\right)\left(\dfrac{B_1n}{B_4}\right)^{1/2}
-\frac{\E{T}\D{Y}}{\E{Y}\sqrt{B_4}}\,\Delta_{n}(\OneVar,\TwoVar).
\end{multline*}
Rewrite it
\begin{multline*}
1-\frac{\OneVar}{\E{Y}n}=\left(\dfrac{B_4}{B_1n}\right)^{1/2}
\left(\Lambda_{n}(\OneVar,\TwoVar)+\frac{\E{T}\D{Y}}{\E{Y}\sqrt{B_4}}\Delta_{n}(\OneVar,\TwoVar)\right)
\\
=\left(\dfrac{1}{B_1n}\right)^{1/2}\left(B_4^{1/2}\Lambda_{n}(\OneVar,\TwoVar)+\frac{B_3}{\E{Y}}\Delta_{n}(\OneVar,\TwoVar)\right),
\end{multline*}
as required. The proof is complete.
\end{proof}

\begin{remark}
The identities of Lemmas \ref{erwtjytk}--\ref{asdfhjnmkgh} in a more general
form were proved and used first in \citeNP{[Malinovskii 1993]}.
\end{remark}

\subsection{Sums related to zeta-functions and polygamma functions}\label{qwerjktyk}

For real $s>1$ and for integer $N>0$, fairly easy is the upper bound
$\sum_{n>N}\frac{1}{n^s}\leqslant\frac{1}{N^s}+\int_{N}^{\infty}\frac{du}{u^{s}}
=\frac{1}{N^s}+\frac{1}{s-1}N^{1-s}\leqslant\frac{s}{s-1}N^{1-s}$. Much more
accurate are the following equalities well known in the theory of Riemann
zeta-function and its generalizations.

\subsection*{Sums related to Riemann zeta-function}\label{wqerthjtr}

By Riemann zeta-function with $s>1$, we call
\begin{equation*}
\zeta(s)=\sum_{n=1}^{\infty}\frac{1}{n^s}.
\end{equation*}
It is known that
\begin{equation*}
\sum_{n=N+1}^{\infty}\frac{1}{n^s}=\frac{N^{1-s}}{s-1}
-\frac{1}{2}N^{-s}+s\int_{N}^{\infty}\frac{\rho(u)du}{u^{s+1}},
\end{equation*}
where $\rho(x)=\frac12-\{x\}$, and for $M>N$
\begin{multline*}
\sum_{N+\frac32<n\leqslant
M+\frac32}\frac{1}{n^s}=\int_{N-\frac12}^{M+\frac12}\frac{du}{(u+1)^{s}}
+s\int_{N-\frac12}^{M+\frac12}\frac{\rho(u)du}{(u+1)^{s+1}}
\\
=\frac{(M+\tfrac32)^{1-s}}{1-s}-\frac{(N+\tfrac12)^{1-s}}{1-s}
+s\int_{N+\tfrac12}^{M+\frac32}\frac{\rho(u)du}{u^{s+1}}.
\end{multline*}
The former equality is explicit as Corollary 2 in Ch.~1, \S~4 of
\citeNP{[Karatsuba Voronin 1992]}, the latter is shown in the proof of Lemma 3
in Ch.~1, \S~4 of \citeNP{[Karatsuba Voronin 1992]}.

\subsection*{Sums related to Hurwitz zeta-function}\label{tyjtjerj}

By Hurwitz zeta-function with $s>1$ and $x>0$, we call
\begin{equation*}
\zeta(s,x)=\sum_{n=x}^{\infty}\frac{1}{n^s}=\sum_{n=0}^{\infty}\frac{1}{(n+x)^s}.
\end{equation*}
For $x>0$ and for any
$s\ne 1$, a convergent Newton series representation is known:
\begin{equation*}
\zeta(s,x)=\sum_{n=0}^{\infty}\frac{1}{(n+x)^s}=\frac{1}{s-1}\sum_{n=0}^{\infty}\frac{1}{n+1}
\sum_{k=0}^{n}(-1)^k\binom{n}{k} (x+k)^{1-s}\sim\frac{x^{1-s}}{s-1}.
\end{equation*}
It is easily seen that $\frac{\partial}{\partial
s}\zeta(s,x)=-\sum_{n=0}^{\infty}\frac{\ln(n+x)}{(n+x)^s}=
\sum_{n=x}^{\infty}\frac{\ln(n)}{n^s}$, and we have
\begin{multline}\label{sdfgrweghsfbh}
\sum_{n=x}^{\infty}\frac{\ln(n)}{n^s}\sim
-\Big(\ln(x)+\frac{1}{s-1}\Big)\zeta(s,x)+\frac{1}{2(s-1)x^s}+\dots
\\
=-\Big(\ln(x)+\frac{1}{s-1}\Big)\frac{x^{1-s}}{s-1}+\dots,\ x\to\infty.
\end{multline}

\subsection*{Sums related to polygamma functions}\label{wertgjkty}

By polygamma function, we call
$\psi(x)=\Gamma^{\prime}(x)/\Gamma(x)\sim\ln(x)$. By polygamma function of
order $m\geqslant 1$, we call $\psi^{(m)}(x)=\frac{d^m}{dx^m}\psi(x)$. It is
known that for $x\to+\infty$
\begin{equation*}
\psi^{(m)}(x)\sim(-1)^{m+1}\sum_{k=0}^{\infty}\frac{(k+m-1)!}{k!}\frac{B_k}{x^{k+m}},
\end{equation*}
where $B_k$ are Stirling's numbers with $B_1=1/2$. It is known that
$\sum_{n=N}^{M}\frac{1}{n}=\psi(1+M)-\psi(1+N)$ for $N<M$. Consequently, we
have, e.g.,
\begin{equation*}
\sum_{n=U}^{U^2}\frac{1}{n}\sim\ln(U)+\frac{1}{2U}+\frac{7}{12U^2}+\dots,\
U\to\infty.
\end{equation*}

Using polygamma functions, we can get the explicit expressions and exact asymptotics of a
number of series of this type. In particular, for $N<M$ we have
$\sum_{n=N}^{M}\frac{1}{n(n+N)}=\frac{1}{N}\big(\psi(1+M)
-\psi(N)+\psi(2N)-\psi(1+M+N)\big)$ and
\begin{equation*}
\sum_{n=U}^{U^2}\frac{1}{n(n+U)}=\sum_{n=\epsilon
U}^{KU^2}\frac{1}{n^2}\bigg(\frac{1}{1+\frac{U}{n}}\bigg)\sim\frac{\ln(2)}{U}-\frac{3}{4U^2}+\frac{9}{16
U^3}+\dots,\ U\to\infty.
\end{equation*}

\subsection{Integrals of rational functions}\label{wqretgherh}

The following integrals of rational functions modified by a square root (cf.
3.158 in \citeNP{[Gradshtein 1980]}) can be found in explicit form. We leave to
the reader the details of these calculations.

\begin{lemma}\label{rthfgnmhfgj}
For $L+R>0$, $M>0$, we have
\begin{multline*}
\int_{L}^{\infty}(y+R)^{-1}(y^2+M^2)^{-3/2}dy\eqOK\frac{R}{M^2(R^2+M^2)}
-\frac{M^2+LR}{M^2\sqrt{L^2+M^2}(R^2+M^2)}
\\[4pt]
+\frac{1}{(R^2+M^2)^{3/2}}\ln\left(\frac{M^2-LR+\sqrt{L^2+M^2}\sqrt{R^2+M^2}}{(L+R)(\sqrt{R^2+M^2}-R)}\right).
\end{multline*}
\end{lemma}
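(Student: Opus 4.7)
The plan is to evaluate the integral directly by partial fractions followed by three elementary antiderivatives, and then to verify that the closed form in the statement is exactly what comes out after assembling the pieces.

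The first step is to split off the rational factor $1/(y+R)$ from $1/(y^2+M^2)$ via the standard identity
\begin{equation*}
\frac{1}{(y+R)(y^2+M^2)}=\frac{1}{R^2+M^2}\left[\frac{1}{y+R}-\frac{y-R}{y^2+M^2}\right],
\end{equation*}
which is verified by clearing denominators. Multiplying both sides by $(y^2+M^2)^{-1/2}$ gives
\begin{equation*}
\frac{1}{(y+R)(y^2+M^2)^{3/2}}=\frac{1}{R^2+M^2}\left[\frac{1}{(y+R)\sqrt{y^2+M^2}}-\frac{y}{(y^2+M^2)^{3/2}}+\frac{R}{(y^2+M^2)^{3/2}}\right].
\end{equation*}
The second and third pieces are trivial: their antiderivatives are $-(y^2+M^2)^{-1/2}$ and $y/(M^2\sqrt{y^2+M^2})$ respectively. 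Evaluated from $L$ to $\infty$ and combined with the factor $1/(R^2+M^2)$, they produce precisely the first two non-logarithmic terms $R/(M^2(R^2+M^2))$ and $-(M^2+LR)/(M^2\sqrt{L^2+M^2}(R^2+M^2))$ in the statement.

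The only non-trivial step is the evaluation of
\begin{equation*}
J(L)=\int_{L}^{\infty}\frac{dy}{(y+R)\sqrt{y^2+M^2}}.
\end{equation*}
I would make the Euler-type substitution $u=1/(y+R)$, so that $dy=-du/u^2$ and $y^2+M^2=[(R^2+M^2)u^2-2Ru+1]/u^2$. This reduces $J(L)$ to an integral of the form $\int du/\sqrt{(R^2+M^2)u^2-2Ru+1}$. Completing the square inside the radical gives $(R^2+M^2)\bigl(u-R/(R^2+M^2)\bigr)^2+M^2/(R^2+M^2)$; then the shift $v=u-R/(R^2+M^2)$ yields a standard $\int dv/\sqrt{v^2+a^2}$ antiderivative, equal to a logarithm scaled by $1/\sqrt{R^2+M^2}$. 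Back-substitution uses the identity $(M^2-Ry)^2+M^2(y+R)^2=(R^2+M^2)(y^2+M^2)$, which makes $\sqrt{v^2+a^2}$ collapse to $\sqrt{y^2+M^2}/\bigl((y+R)\sqrt{R^2+M^2}\bigr)$ and produces the antiderivative
\begin{equation*}
F(y)=-\frac{1}{\sqrt{R^2+M^2}}\ln\!\left(\frac{M^2-Ry+\sqrt{R^2+M^2}\sqrt{y^2+M^2}}{(y+R)(R^2+M^2)}\right).
\end{equation*}

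Taking the limit $y\to\infty$ one finds $F(\infty)=-(R^2+M^2)^{-1/2}\ln\bigl((\sqrt{R^2+M^2}-R)/(R^2+M^2)\bigr)$, so that $F(\infty)-F(L)$ equals $(R^2+M^2)^{-1/2}$ times the logarithm appearing in the lemma. Dividing by the prefactor $R^2+M^2$ from the partial-fraction decomposition converts the scalar $(R^2+M^2)^{-1/2}$ into $(R^2+M^2)^{-3/2}$, matching the claimed formula. The main potential obstacle is bookkeeping in the back-substitution: one has to verify the algebraic identity $(M^2-Ry)^2+M^2(y+R)^2=(R^2+M^2)(y^2+M^2)$ and check that $y+R>0$ on the interval of integration (which is guaranteed by the hypothesis $L+R>0$, since $y\geqslant L$), so that no absolute-value corrections appear. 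Beyond this, the calculation is mechanical.
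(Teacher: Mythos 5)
Your proposal is correct, and it supplies precisely what the paper omits: the paper gives no argument for this lemma beyond a pointer to entry 3.158 of Gradshtein--Ryzhik and the remark that the details are left to the reader. Your route --- splitting $1/((y+R)(y^2+M^2))$ by partial fractions, integrating the two elementary pieces to get the non-logarithmic terms, and reducing $\int_L^\infty (y+R)^{-1}(y^2+M^2)^{-1/2}\,dy$ by the substitution $u=1/(y+R)$ to $\int du/\sqrt{(R^2+M^2)u^2-2Ru+1}$ --- is a complete elementary verification. The partial-fraction identity, the two boundary evaluations (which indeed combine into $R/(M^2(R^2+M^2))-(M^2+LR)/(M^2\sqrt{L^2+M^2}(R^2+M^2))$), the algebraic identity $(M^2-Ry)^2+M^2(y+R)^2=(R^2+M^2)(y^2+M^2)$, and the limit $y\to\infty$ all check out, and after absorbing the prefactor $1/(R^2+M^2)$ the assembled logarithm is exactly the one in the statement. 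Two small points are worth making explicit if you write this up: the hypothesis $L+R>0$ (with $M>0$) guarantees not only $y+R>0$ on the whole range, hence $u>0$ and no absolute-value corrections, but also convergence of the integral and positivity of the argument of the logarithm for every admissible $L$ and every sign of $R$, since $\sqrt{(L^2+M^2)(R^2+M^2)}\geqslant|LR+M^2|$ forces $M^2-LR+\sqrt{L^2+M^2}\sqrt{R^2+M^2}>0$, while $\sqrt{R^2+M^2}-R>0$ always; this matters because the lemma as stated does not require $R>0$ or $M^2-LR>0$, and your derivation covers the full stated generality.
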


\begin{lemma}\label{wqetfyuk6}
For $L\geqslant 0$, $L+R>0$, $M>0$, we have
\begin{multline*}
\int_{L}^{\infty}\mid
y\mid(y+R)^{-1}(y^2+M^2)^{-3/2}dy\eqOK\frac{1}{R^2+M^2}+\frac{R-L}{(R^2+M^2)\sqrt{M^2+L^2}}
\\
+\frac{R}{(R^2+M^2)^{3/2}}\ln\left(\frac{(R+L)(\sqrt{R^2+M^2}-R)}{M^2-RL+\sqrt{R^2+M^2}\sqrt{M^2+L^2}}\right),
\end{multline*}
and for $L\leqslant 0$, $L+R>0$, $M>0$, we have
\begin{multline*}
\int_{L}^{\infty}|y|(y+R)^{-1}(y^2+M^2)^{-3/2}dy\eqOK\frac{2R+M}{M(R^2+M^2)}-\frac{R-L}{(R^2+M^2)\sqrt{M^2+L^2}}
\\
\hskip -6pt+\frac{R}{(R^2+M^2)^{3/2}}\ln\left(\frac{R^2(\sqrt{R^2+M^2}-R)(M^2-RL+\sqrt{R^2+M^2}\sqrt{M^2+L^2})}
{M^2(M+\sqrt{R^2+M^2})^2(R+L)}\right).
\end{multline*}
\end{lemma}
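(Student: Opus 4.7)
The plan is to reduce the stated integral to the one already computed in Lemma~\ref{rthfgnmhfgj}, together with an elementary antiderivative, by means of the algebraic identity $y=(y+R)-R$. More precisely, when $y\geqslant 0$ one has
\begin{equation*}
\frac{|y|}{(y+R)(y^2+M^2)^{3/2}}=\frac{1}{(y^2+M^2)^{3/2}}-\frac{R}{(y+R)(y^2+M^2)^{3/2}},
\end{equation*}
and the first summand has the standard antiderivative $y/(M^2\sqrt{y^2+M^2})$.

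For the case $L\geqslant 0$, I would simply integrate the displayed identity from $L$ to $\infty$. The first term contributes $M^{-2}-L/(M^2\sqrt{L^2+M^2})$, while $R$ times the integral in Lemma~\ref{rthfgnmhfgj} (with the log written as $-\ln(\text{reciprocal})$) gives three further terms. The arithmetic simplifications
\begin{equation*}
\frac{1}{M^2}-\frac{R^2}{M^2(R^2+M^2)}=\frac{1}{R^2+M^2},
\qquad
\frac{R(M^2+LR)-L(R^2+M^2)}{M^2\sqrt{L^2+M^2}(R^2+M^2)}=\frac{R-L}{(R^2+M^2)\sqrt{M^2+L^2}},
\end{equation*}
then produce exactly the first formula in the lemma.

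For the case $L\leqslant 0$ with $L+R>0$, I would split the integral at $0$:
\begin{equation*}
\int_{L}^{\infty}|y|(y+R)^{-1}(y^2+M^2)^{-3/2}dy=\int_{0}^{\infty}\frac{y\,dy}{(y+R)(y^2+M^2)^{3/2}}+\int_{L}^{0}\frac{-y\,dy}{(y+R)(y^2+M^2)^{3/2}}.
\end{equation*}
The first integral is the $L=0$ specialization of the previously established formula. For the second integral I would apply the same decomposition $y=(y+R)-R$; the elementary piece yields $L/(M^2\sqrt{L^2+M^2})$, and the remaining piece is $-R\bigl[\int_{L}^{\infty}-\int_{0}^{\infty}\bigr](y+R)^{-1}(y^2+M^2)^{-3/2}dy$, to which Lemma~\ref{rthfgnmhfgj} is applied twice.

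The only non-routine step is the consolidation of the three logarithmic contributions (one from the $L=0$ integral, two from the second piece) into a single logarithm: the difference of two log terms becomes the log of a quotient, and then the sum with the $L=0$ log becomes the log of the product. I expect this to be the main, though still purely algebraic, obstacle. A careful bookkeeping of the factors $M^2-LR$, $\sqrt{R^2+M^2}\pm R$ and $M+\sqrt{R^2+M^2}$ should yield precisely the rational expression inside the logarithm in the second formula of the lemma, namely
\begin{equation*}
\frac{R^2(\sqrt{R^2+M^2}-R)(M^2-RL+\sqrt{R^2+M^2}\sqrt{M^2+L^2})}{M^2(M+\sqrt{R^2+M^2})^2(R+L)},
\end{equation*}
after which collecting the rational terms with the help of the identity $M^{-1}+R/M(R^2+M^2)=(2R+M)/M(R^2+M^2)-R^2/M^2(R^2+M^2)+\cdots$ completes the computation.
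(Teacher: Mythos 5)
Your proposal is correct, and the bookkeeping does close: for $L\geqslant 0$ the split $y/(y+R)=1-R/(y+R)$, the antiderivative $y/(M^2\sqrt{y^2+M^2})$, and Lemma~\ref{rthfgnmhfgj} give rational terms that collapse exactly to $\tfrac{1}{R^2+M^2}+\tfrac{R-L}{(R^2+M^2)\sqrt{L^2+M^2}}$, while the sign flip turns the logarithm into the stated one; for $L\leqslant 0$ (where $L+R>0$ forces $R>0$, so Lemma~\ref{rthfgnmhfgj} is applicable at both lower limits $L$ and $0$) the three logarithmic contributions merge into $\ln\bigl(R^2(\sqrt{R^2+M^2}-R)(M^2-RL+\sqrt{R^2+M^2}\sqrt{L^2+M^2})/(M^2(M+\sqrt{R^2+M^2})^2(R+L))\bigr)$ and the rational terms combine to $\tfrac{2R+M}{M(R^2+M^2)}-\tfrac{R-L}{(R^2+M^2)\sqrt{L^2+M^2}}$, as claimed. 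Your route is genuinely different from the paper's, in the sense that the paper supplies no derivation at all: it only points to entry 3.158 of Gradshtein--Ryzhik and ``leaves to the reader the details of these calculations.'' Your reduction of Lemma~\ref{wqetfyuk6} to Lemma~\ref{rthfgnmhfgj} via $y=(y+R)-R$, plus splitting the integral at $0$ when $L\leqslant 0$, is self-contained and has the advantage that only the single integral of Lemma~\ref{rthfgnmhfgj} ever needs independent verification, the present lemma then being a purely algebraic consequence; it also makes the hypotheses transparent ($y+R\geqslant L+R>0$ keeps the integrand regular, and $M>0$ is needed only for the elementary antiderivative and for $\sqrt{R^2+M^2}-R>0$ inside the logarithm). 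The one blemish is your final displayed ``identity'' ending in ``$+\cdots$'', which as written is garbled; it should simply be replaced by the two clean cancellations recorded above, which is all the rational-term bookkeeping that is required.
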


To shorten notation in the following two lemmas, we put $P=-\frac{L}{2M}$,
$K=2MR$.

\begin{lemma}\label{wqerthrjnmr}
For $K>0$, we have
\begin{equation*}
\int_{0}^{\infty}\big(K+y\big)^{-1}
\big(1+\sqrt{y}\big)^{-3}dy\eqOK\frac{K-3}{(1+K)^2}-\frac{\pi\sqrt{K}(K-3)}{(1+K)^3}+\frac{3K-1}{(1+K)^3}\ln(K),
\end{equation*}
and for $0<P<K$, we have
\begin{multline*}
\int_{0}^{P}\big(K-y\big)^{-1}
\big(1+\sqrt{y}\big)^{-3}dy\eqOK\frac{4\sqrt{P}+(3+K)P}{(K-1)^2(1+\sqrt{P})^2}
+\frac{\sqrt{K}(3+K)}{(K-1)^3}\ln\bigg(\frac{\sqrt{K}+\sqrt{P}}{\sqrt{K}-\sqrt{P}}\bigg)
\\
+\frac{3K+1}{(K-1)^3}\ln\bigg(\frac{K-P}{K(1+\sqrt{P})^2}\bigg).
\end{multline*}
\end{lemma}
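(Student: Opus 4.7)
Both integrals have square roots in the integrand, and the natural first move is the rationalizing substitution $u=\sqrt{y}$, $dy=2u\,du$. This turns the first integral into
\begin{equation*}
\int_{0}^{\infty}\frac{2u\,du}{(K+u^{2})(1+u)^{3}}
\end{equation*}
and the second into
\begin{equation*}
\int_{0}^{\sqrt{P}}\frac{2u\,du}{(K-u^{2})(1+u)^{3}},
\end{equation*}
so both become integrals of rational functions to which partial fractions and elementary antiderivatives apply. The overall strategy is: (i) decompose into partial fractions, (ii) integrate each simple piece, (iii) collect and simplify to match the stated closed form.

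For the first integral I would seek a decomposition of the form
\begin{equation*}
\frac{2u}{(K+u^{2})(1+u)^{3}}=\frac{A}{1+u}+\frac{B}{(1+u)^{2}}+\frac{C}{(1+u)^{3}}+\frac{Du+E}{K+u^{2}}.
\end{equation*}
Setting $u=-1$ gives $C=-2/(K+1)$ immediately. Setting $u=i\sqrt{K}$ and expanding $(1+i\sqrt{K})^{3}=(1-3K)+i\sqrt{K}(3-K)$ yields, after equating real and imaginary parts and using the clean identity $(1-3K)^{2}+K(3-K)^{2}=(K+1)^{3}$, the values $D=2(1-3K)/(K+1)^{3}$ and $E=2K(3-K)/(K+1)^{3}$. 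The remaining $A,B$ are then obtained by matching two more coefficients (or equivalently by differentiating the identity at $u=-1$). Term by term, the antiderivatives are $\ln(1+u)$, $-(1+u)^{-1}$, $-\tfrac12(1+u)^{-2}$, $\tfrac12\ln(K+u^{2})$, and $K^{-1/2}\arctan(u/\sqrt{K})$. Evaluating at $u=\infty$ and $u=0$, the logarithms of $1+u$ and $K+u^{2}$ must be grouped so that the logarithmic divergence cancels (which forces $A$ to combine with $D/2$ into the rational part, leaving only $\ln K$), and the arctan contributes $\pi/(2\sqrt{K})$; this is exactly the source of the $-\pi\sqrt{K}(K-3)/(1+K)^{3}$ term in the stated formula.

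The second integral is treated in the same spirit, but now $K-u^{2}=(\sqrt{K}-u)(\sqrt{K}+u)$ factors over the reals, so the natural decomposition is
\begin{equation*}
\frac{2u}{(K-u^{2})(1+u)^{3}}=\frac{A}{1+u}+\frac{B}{(1+u)^{2}}+\frac{C}{(1+u)^{3}}+\frac{F}{\sqrt{K}-u}+\frac{G}{\sqrt{K}+u}.
\end{equation*}
The residues at $u=\pm\sqrt{K}$ give $F=1/(1+\sqrt{K})^{3}$ and $G=-1/(1-\sqrt{K})^{3}$; combining $F\ln(\sqrt{K}-u)$ and $G\ln(\sqrt{K}+u)$ over $[0,\sqrt{P}]$ produces the single logarithm $\ln\big((\sqrt{K}+\sqrt{P})/(\sqrt{K}-\sqrt{P})\big)$ with the prefactor $\sqrt{K}(3+K)/(K-1)^{3}$ after using $(1+\sqrt{K})^{3}(1-\sqrt{K})^{3}=(1-K)^{3}$ and $(1+\sqrt{K})^{3}+(1-\sqrt{K})^{3}=2(1+3K)$, $(1-\sqrt{K})^{3}-(1+\sqrt{K})^{3}=-2\sqrt{K}(3+K)$. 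The $1/(1+u)^{k}$ pieces contribute the purely rational term $(4\sqrt{P}+(3+K)P)/\big((K-1)^{2}(1+\sqrt{P})^{2}\big)$ together with the second logarithm $(3K+1)/(K-1)^{3}\cdot\ln\big((K-P)/(K(1+\sqrt{P})^{2})\big)$, where $K-P$ arises from collecting $\ln(\sqrt{K}-\sqrt{P})+\ln(\sqrt{K}+\sqrt{P})=\ln(K-P)$ minus a $\ln K$ coming from the lower limit $u=0$ in the $\tfrac12\ln|K-u^{2}|$ antiderivative.

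The only genuine obstacle is bookkeeping: the numerators of the partial fractions involve asymmetric combinations of $K$, and the closed form depends on several nontrivial algebraic identities like $(1-3K)^{2}+K(3-K)^{2}=(K+1)^{3}$ in the first case and the symmetric/antisymmetric sums of $(1\pm\sqrt{K})^{3}$ in the second. Once those are in hand, the remaining work is purely mechanical and the stated formulas follow after grouping the rational pieces over common denominators $(1+K)^{2}$, $(1+K)^{3}$ (respectively $(K-1)^{2}$, $(K-1)^{3}$). I therefore omit the routine algebraic simplification.
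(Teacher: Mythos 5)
Your proposal is correct and is essentially the computation the paper omits (the lemma is stated with only a pointer to Gradshtein--Ryzhik 3.158 and ``we leave to the reader the details''): the substitution $u=\sqrt{y}$ followed by partial fractions does reproduce both closed forms, and your key constants check out --- $C=-2/(K+1)$, $D=2(1-3K)/(K+1)^3$, $E=2K(3-K)/(K+1)^3$ via the identity $(1-3K)^2+K(3-K)^2=(K+1)^3$, the residues $F=(1+\sqrt{K})^{-3}$, $G=-(1-\sqrt{K})^{-3}$, and the resulting rational parts $\frac{K-3}{(1+K)^2}$ and $\frac{4\sqrt{P}+(3+K)P}{(K-1)^2(1+\sqrt{P})^2}$ after collecting over common denominators. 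One minor bookkeeping correction: in the second integral the term $\frac{3K+1}{(K-1)^3}\ln\frac{K-P}{K(1+\sqrt{P})^2}$ arises from the symmetric combination $-\frac{F-G}{2}\ln\bigl(K-u^2\bigr)$ of the two simple poles (which supplies $\ln\frac{K-P}{K}$) together with the $A/(1+u)$ piece with $A=-\frac{2(3K+1)}{(K-1)^3}$ (which supplies the $(1+\sqrt{P})^{-2}$ inside the logarithm), rather than from the $(1+u)^{-k}$ pieces as you describe, but this does not affect the validity of the argument or the final formulas.
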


\begin{lemma}\label{eruyjtky}
For $K>0$, $P<0$, we have
\begin{multline*}
\int_{P}^{\infty}|y|\big(K+y\big)^{-1}\big(1+\sqrt{|y|}\big)^{-3}dy
\eqOK\frac{5K+1}{(1+K)^2}+\frac{\pi K^{3/2}(K-3)}{(1+K)^3}+\frac{5K-1}{(K-1)^2}
\\
+\frac{2\sqrt{P}(1-3K)-(5K-1)}{(K-1)^2(1+\sqrt{P})^2}
-\frac{K(3K-1)}{(1+K)^3}\ln(K)
\\
+\frac{K^{3/2}(3+K)}{(K-1)^3}
\ln\left(\frac{\sqrt{P}+\sqrt{K}}{\sqrt{K}-\sqrt{P}}\right)
+\frac{K(3K+1)}{(K-1)^3}\ln\left(\frac{K-P}{K(1+\sqrt{P})^2}\right).
\end{multline*}
\end{lemma}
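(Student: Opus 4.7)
The integrand contains the absolute value $|y|$ and the power of $\sqrt{|y|}$, so the natural first move is to split at $y=0$:
\begin{equation*}
\int_{P}^{\infty}\frac{|y|\,dy}{(K+y)(1+\sqrt{|y|})^{3}}
=\int_{0}^{\infty}\frac{y\,dy}{(K+y)(1+\sqrt{y})^{3}}
+\int_{0}^{|P|}\frac{u\,du}{(K-u)(1+\sqrt{u})^{3}},
\end{equation*}
where in the second term I have substituted $u=-y$. Implicit in the statement is $|P|<K$, needed so that $K+y$ is bounded away from $0$ on $[P,\infty)$. The strategy is to reduce each piece to formulas already supplied by Lemma~\ref{wqerthrjnmr} via the algebraic identities $y/(K+y)=1-K/(K+y)$ and $u/(K-u)=-1+K/(K-u)$, each of which trades the numerator factor $y$ (resp.\ $u$) for a constant plus a multiple of Lemma~\ref{wqerthrjnmr}'s integrand.

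\textbf{Evaluation of the first piece.} Splitting $y/(K+y)=1-K/(K+y)$ and using $\int_{0}^{\infty}(1+\sqrt{y})^{-3}dy=1$ (immediate via $t=\sqrt{y}$, since $\int_0^\infty 2t(1+t)^{-3}dt=1$) together with the first formula of Lemma~\ref{wqerthrjnmr} gives
\begin{equation*}
\int_{0}^{\infty}\frac{y\,dy}{(K+y)(1+\sqrt{y})^{3}}
=1-K\Bigl[\tfrac{K-3}{(1+K)^{2}}-\tfrac{\pi\sqrt{K}(K-3)}{(1+K)^{3}}
+\tfrac{3K-1}{(1+K)^{3}}\ln K\Bigr].
\end{equation*}
The identity $1-K(K-3)/(1+K)^{2}=(5K+1)/(1+K)^{2}$ converts this expression into exactly the first three summands of the claimed answer.

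\textbf{Evaluation of the second piece.} Splitting $u/(K-u)=-1+K/(K-u)$, the elementary part $-\int_{0}^{|P|}(1+\sqrt{u})^{-3}du$ equals $-|P|/(1+\sqrt{|P|})^{2}$ (again by $t=\sqrt{u}$ and partial fractions in $(1+t)^{-2},(1+t)^{-3}$), while the remaining $K\int_{0}^{|P|}(K-u)^{-1}(1+\sqrt{u})^{-3}du$ is precisely $K$ times the second formula of Lemma~\ref{wqerthrjnmr} with $P$ replaced by $|P|$. The two logarithmic terms in the statement, $K^{3/2}(3+K)(K-1)^{-3}\ln(\sqrt{P}+\sqrt{K})/(\sqrt{K}-\sqrt{P})$ and $K(3K+1)(K-1)^{-3}\ln(K-P)/(K(1+\sqrt{P})^{2})$, drop out at once, under the author's shorthand that $\sqrt{P}$ means $\sqrt{|P|}$ and $K-P$ means $K-|P|$ when $P<0$.

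\textbf{Assembly and the only nontrivial step.} Combining contributions, the non-logarithmic part with denominator $(1+\sqrt{|P|})^{2}$ reads, with $q=\sqrt{|P|}$,
\begin{equation*}
-\frac{q^{2}}{(1+q)^{2}}+\frac{K\bigl[4q+(3+K)q^{2}\bigr]}{(K-1)^{2}(1+q)^{2}}
=\frac{q\bigl[(5K-1)q+4K\bigr]}{(K-1)^{2}(1+q)^{2}},
\end{equation*}
using $K(3+K)-(K-1)^{2}=5K-1$. The claim that this equals $(5K-1)/(K-1)^{2}+[2\sqrt{P}(1-3K)-(5K-1)]/((K-1)^{2}(1+\sqrt{P})^{2})$ reduces, after clearing denominators, to $(5K-1)[(1+q)^{2}-1]+2q(1-3K)=q[(5K-1)q+4K]$, which is routine. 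This bookkeeping step is the only place where effort is required; there are no analytic difficulties because all the integration work has been done in Lemma~\ref{wqerthrjnmr}.
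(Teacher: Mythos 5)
Your proof is correct, and since the paper itself gives no argument for Lemma~\ref{eruyjtky} (Section~\ref{wqretgherh} explicitly leaves the ``details of these calculations'' to the reader, with a pointer to Gradshtein--Ryzhik 3.158), there is no written proof to match; the implied route there is a direct evaluation via $t=\sqrt{|y|}$ and partial fractions or a table lookup. What you do instead is a clean reduction to the paper's own Lemma~\ref{wqerthrjnmr}: split at $y=0$, trade $y/(K+y)=1-K/(K+y)$ and $u/(K-u)=-1+K/(K-u)$, and keep only the two elementary integrals $\int_{0}^{\infty}(1+\sqrt{y})^{-3}dy=1$ and $\int_{0}^{|P|}(1+\sqrt{u})^{-3}du=|P|/(1+\sqrt{|P|})^{2}$, so that all the genuine integration is inherited from Lemma~\ref{wqerthrjnmr}; the remaining algebra (in particular $K(3+K)-(K-1)^{2}=5K-1$ and the regrouping into $\tfrac{5K-1}{(K-1)^{2}}+\tfrac{2\sqrt{|P|}(1-3K)-(5K-1)}{(K-1)^{2}(1+\sqrt{|P|})^{2}}$) checks out. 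This buys a shorter verification and, as a by-product, exhibits the internal consistency of Lemmas~\ref{wqerthrjnmr} and~\ref{eruyjtky}. Two remarks on reading the statement, both of which you resolved correctly: the formula as printed only makes sense under the convention $\sqrt{P}=\sqrt{|P|}$, $K-P=K-|P|$, together with the implicit requirement $|P|<K$ (equivalently $K+P>0$, needed for convergence), and this is exactly the reading consistent with the use of the lemma in Section~\ref{wertkktukt}, where a positive $P=-\tfrac{L}{2M}$ is substituted while the lower integration limit is negative. Finally, a harmless slip of wording: the first piece does not produce ``the first three summands'' of the claimed answer but the first two together with the term $-\tfrac{K(3K-1)}{(1+K)^{3}}\ln(K)$; your subsequent assembly treats $\tfrac{5K-1}{(K-1)^{2}}$ as coming from the second piece, so the bookkeeping is nevertheless consistent and nothing is double counted.
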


\subsection{The asymptotic behavior of $\Integral{1}$}\label{qasrdtjkyl}

Let us verify that for $R=\mcA\tfrac{U}{\sqrt{n}}+\mcB\sqrt{n}$,
$M=\mcC\tfrac{U}{\sqrt{n}}$,
$L=\tfrac{\mcC^2}{\mcA}\tfrac{U}{\sqrt{n}}-\mcB\sqrt{n}$ with
$\mcA,\mcB,\mcC>0$
\begin{equation*}
\Integral{1}=U\hskip -8pt\sum_{\EnOne<n<\frac{(\E{T})^2}{B_1}\,U^2}\hskip -6pt
n^{-3/2}\int_{L}^{\infty}\big(y+R\big)^{-1}
\big(y^2+M^2\big)^{-3/2}\,dy=\underline{O}\bigg(\dfrac{\ln(U)}{U}\bigg),\
U\to\infty.
\end{equation*}

Put the above $R$, $M$, and $L$ in the integral evaluated in
Lemma~\ref{rthfgnmhfgj}. It is checked by direct calculations that
\begin{equation*}
\begin{aligned}
&\frac{Un^{-3/2}R}{M^2(R^2+M^2)}\eqOK\frac{1}{\mcC^2U}\frac{\mcB
n+\mcA U}{(\mcB{n}+\mcA U)^2+\mcC^2 U^2},
\\[10pt]
&\frac{Un^{-3/2}(M^2+LR)}{M^2\sqrt{L^2+M^2}\,(R^2+M^2)}\eqOK\frac{1}{\mcC^2U}\frac{\mcA\mcC^2 U^2
-(\mcB n+\mcA U)(\mcA \mcB n-\mcC^2U)}{((\mcB n+\mcA U)^2+\mcC^2U^2)
\sqrt{(\mcA\mcB n-\mcC^2U)^2+\mcA^2\mcC^2U^2}},
\\[10pt]
&\frac{Un^{-3/2}}{(R^2+M^2)^{3/2}}
\ln\bigg(\frac{M^2-LR+\sqrt{L^2+M^2}\sqrt{R^2+M^2}}{(L+R)\,\big(\sqrt{R^2+M^2}-R\,\big)}\bigg)
=\frac{U\ln(\NomiN{1}/\DeNomiN{1})}{((\mcB n+\mcA U)^2+\mcC^2U^2)^{3/2}},
\end{aligned}
\end{equation*}
where
\begin{equation*}
\begin{aligned}
\NomiN{1}&=\mcB(\mcC^2-\mcA^2)nU-\mcA\mcB^2n^2-((\mcB n+\mcA
U)^2+\mcC^2U^2)^{1/2}((\mcA\mcB n-\mcC^2U)^2+\mcA^2\mcC^2U^2)^{1/2},
\\[2pt]
\DeNomiN{1}&=(\mcA^2+\mcC^2)U(\mcB n+\mcA U-((\mcB n+\mcA
U)^2+\mcC^2U^2)^{1/2}).
\end{aligned}
\end{equation*}

We have $\Integral{1}=\IntegraL{1}{1}+\IntegraL{1}{2}+\IntegraL{1}{3}$, where
\begin{equation*}
\begin{aligned}
\IntegraL{1}{1}&=\frac{1}{\mcC^2
U}\sum_{\EnOne<n<\frac{(\E{T})^2}{B_1}\,U^2}\frac{\mcB n+\mcA U}{(\mcB{n}+\mcA
U)^2+\mcC^2U^2},
\\
\IntegraL{1}{2}&=\frac{1}{\mcC^2U}\sum_{\EnOne<n<\frac{(\E{T})^2}{B_1}\,U^2}\frac{\mcA\mcC^2
U^2 -(\mcB n+\mcA U)(\mcA \mcB n-\mcC^2U)}{((\mcB n+\mcA U)^2+\mcC^2U^2)
\sqrt{(\mcA\mcB n-\mcC^2U)^2+\mcA^2\mcC^2U^2}},
\\
\IntegraL{1}{3}&=U\sum_{\EnOne<n<\frac{(\E{T})^2}{B_1}\,U^2}
\frac{\ln({\NomiN{1}}/{\DeNomiN{1}})}{((\mcB n+\mcA U)^2+\mcC^2U^2)^{3/2}}.
\end{aligned}
\end{equation*}
To investigate the asymptotic behavior of $\IntegraL{1}{1}$, $\IntegraL{1}{2}$,
$\IntegraL{1}{3}$, as $U\to\infty$, note that the fractions under the summation
sign, as well as the argument of the logarithmic function in $\IntegraL{1}{3}$,
are rational functions of $n$ modified by a square root. Extracting the highest
power of $n$ from both nominators and denominators of these fractions, we have
\begin{equation*}
\begin{aligned}
\IntegraL{1}{1}&=\frac{1}{\mcC^2
U}\sum_{\EnOne<n<\frac{(\E{T})^2}{B_1}\,U^2}\frac{1}{n}\,
\underbrace{\frac{\mcB+\mcA\frac{U}{n}}{\mcB^2+2\mcA\mcB\frac{U}{n}+(\mcA^2+\mcC^2)
\frac{U^2}{n^2}}},
\\[2pt]
\IntegraL{1}{2}
&=\frac{1}{\mcC^2U}\sum_{\EnOne<n<\frac{(\E{T})^2}{B_1}\,U^2}\frac{1}{n}
\,\underbrace{\frac{\mcA\mcC^2\frac{U^2}{n^2}
-(\mcB+\mcA\frac{U}{n})(\mcA\mcB-\mcC^2\frac{U}{n})}
{\big((\mcB+\mcA\frac{U}{n})^2+\mcC^2\frac{U^2}{n^2}\big)
\sqrt{\big(\mcA\mcB-\mcC^2\frac{U}{n}\big)^2+\mcA^2\mcC^2\frac{U^2}{n^2}}}},
\\[2pt]
\IntegraL{1}{3}&=U\sum_{\EnOne<n<\frac{(\E{T})^2}{B_1}\,U^2}\frac{\ln\big(n\,
(\NomiN{1}/n^2)/(\DeNomiN{1}/n)
\big)}{n^3}\underbrace{\frac{1}{\big((\mcB+\mcA\frac{U}{n})^2
+\mcC^2\frac{U^2}{n^2}\big)^{3/2}}}.
\end{aligned}
\end{equation*}

Since for $n>\EnOne$ the ratio $U/n$ is bounded by a constant, and even
monotone decreases to zero, as $n$ growth to infinity, the expressions
underlined by a brace and $(\NomiN{1}/n^2)/(\DeNomiN{1}/n)$ do not exceed a
constant for all $n>\EnOne$, as $U$ growth to infinity. The proof is completed
by summation, as it was done in Section~\ref{wertgjkty}:
$\IntegraL{1}{1}\sim\ln(U)U^{-1}$, $\IntegraL{1}{2}\sim\ln(U)U^{-1}$, and
$\IntegraL{1}{3}\sim\ln(U)U^{-1}$, as $U\to\infty$.

\subsection{The asymptotic behavior of $\Integral{2}$}\label{qwerjkgulgh}

Let us verify that for $R=\mcA\tfrac{U}{\sqrt{n}}+\mcB\sqrt{n}$,
$M=\mcC\tfrac{U}{\sqrt{n}}$,
$L=\tfrac{\mcC^2}{\mcA}\tfrac{U}{\sqrt{n}}-\mcB\sqrt{n}$ with
$\mcA,\mcB,\mcC>0$
\begin{multline*}
\Integral{2}=U\sum_{n>\frac{(\E{T})^2}{B_1}\,U^2}
n^{-3/2}\bigg(\int_{0}^{\infty}\big(2MR+y\big)^{-1}
\big(1+\sqrt{y}\big)^{-3}\,dy
\\[-8pt]
+\int_{0}^{-\frac{L}{2M}}\big(2MR-y\big)^{-1}
\big(1+\sqrt{y}\big)^{-3}\,dy\bigg)=\underline{O}\bigg(\dfrac{\ln(U)}{U}\bigg),\
U\to\infty.
\end{multline*}

Put the above $R$, $M$, and $L$ in the integrals evaluated in
Lemma~\ref{wqerthrjnmr}. For the first of them, it is checked by direct
calculations that
\begin{equation}\label{saesrfgewge}
\begin{aligned}
&\frac{U}{n^{3/2}}\frac{K-3}{(1+K)^2}\,\Big|_{K=2MR} \eqOK U\,\frac{2\mcC
U(\mcA U+\mcB n)-3n}{\sqrt{n}(2\mcC U(\mcA U+\mcB n)+n)^2},
\\[4pt]
&\frac{U}{n^{3/2}}\frac{\pi\sqrt{K}(K-3)}{(1+K)^3}\,\bigg|_{K=2MR}\eqOK
U^{3/2}\,\frac{\pi\sqrt{2\mcC(\mcA U+\mcB n)}\big(2\mcC U(\mcA U+\mcB
n)-3n\big)}{(2\mcC U(\mcA U+\mcB n)+n)^3},
\\[4pt]
&\frac{U}{n^{3/2}}\frac{3K-1}{(1+K)^3}\ln(K)\,\bigg|_{K=2MR}\eqOK
U\,\frac{\sqrt{n}(6\mcC U(\mcA U+\mcB n)-n)}{\big(2\mcC U(\mcA U+\mcB
n)+n\big)^3}\ln\left(\frac{2\mcC U(\mcA U+\mcB n)}{n}\right).
\end{aligned}
\end{equation}
For the second of them, it is checked by direct calculations that
\begin{equation}\label{qwergehn}
\begin{aligned}
&\frac{U}{n^{3/2}}\frac{4\sqrt{P}+(3+K)P}{(K-1)^2(1
+\sqrt{P})^2}\,\bigg|_{\substack{P=-\frac{L}{2M}\\K=2MR}}
\\[2pt]
&\hskip 40pt \eqOK U\,\frac{(\mcA\mcB n-\mcC^2U)(2\mcC U(\mcA U+\mcB n)+3n)
+4n\sqrt{2\mcA\mcC U(\mcA\mcB n-\mcC^2U)}}{\sqrt{n}\big(\sqrt{2\mcA\mcC U}
+\sqrt{\mcA\mcB n-\mcC^2 U}\big)^2\big(2\mcC U(\mcA U+\mcB n)-n\big)^2},
\\[2pt]
&\frac{U}{n^{3/2}}\frac{\sqrt{K}(3+K)}{(K-1)^3}\ln\left(\frac{\sqrt{K}+\sqrt{P}}{\sqrt{K}
-\sqrt{P}}\right)\,\bigg|_{\substack{P=-\frac{L}{2M}\\K=2MR}}
\\[2pt]
&\hskip 40pt\eqOK U^{3/2}\frac{\sqrt{2\mcC}\sqrt{\mcB n+\mcA U}\big(2\mcC
U(\mcA U+\mcB n)+3n\big)} {(2\mcC U(\mcA U+\mcB
n)-n)^3}\ln\left(-\frac{\NomiN{2}}{\DeNomiN{2}}\right),
\\[2pt]
&\frac{U}{n^{3/2}}\frac{3K+1}{(K-1)^3}\ln\left(\frac{K-P}{K(1+\sqrt{P})^2}
\right)\,\bigg|_{\substack{P=-\frac{L}{2M}\\K=2MR}} \eqOK
U\,\frac{\sqrt{n}(n+6\mcC U(\mcB n+\mcA U))}{(2\mcC U(\mcA U+\mcB
n)-n)^3}\ln\left(\frac{\NomiN{3}}{\DeNomiN{3}}\right),
\end{aligned}
\end{equation}
where
\begin{equation*}
\begin{aligned}
\NomiN{2}&=(\mcA\mcB n-\mcC^2U)n)^{1/2}+2(\mcA\mcC^2U^2(\mcA U+\mcB n))^{1/2},
\\
\DeNomiN{2}&=((\mcA\mcB n-\mcC^2U)n)^{1/2}-2(\mcA\mcC^2U^2(\mcA U+\mcB
n))^{1/2}
\end{aligned}
\end{equation*}
and
\begin{equation*}
\begin{aligned}
\NomiN{3}&=(\mcC^2U-\mcA\mcB n)n+4\mcA\mcC^2 U^2(\mcA U+\mcB n),
\\
\DeNomiN{3}&=(\mcB n+\mcA U)\mcC U(2\sqrt{\mcA\mcC U}+(2(\mcA\mcB
n-\mcC^2U))^{1/2})^2.
\end{aligned}
\end{equation*}

Using the standard technique of investigating the asymptotic behavior of the
summands in
$\Integral{2}=\IntegraL{2}{1}+\IntegraL{2}{2}+\IntegraL{2}{3}+\IntegraL{2}{4}
+\IntegraL{2}{5}+\IntegraL{2}{6}$ described in Section \ref{qasrdtjkyl}, we
have first\footnote{Note that in sums with $n>\frac{(\E{T})^2}{B_1}\,U^2$ the
ratio $U^2/n$ tends to zero, as $U\to\infty$.} $\IntegraL{2}{1}\sim U^{-1}$, as
$U\to\infty$, since
\begin{equation*}
\IntegraL{2}{1}=U\hskip -6pt\sum_{n>\frac{(\E{T})^2}{B_1}\,U^2}\frac{2\mcC
U(\mcA U+\mcB n)-3n}{\sqrt{n}(2\mcC U(\mcA U+\mcB n)+n)^2} =U\hskip
-6pt\sum_{n>\frac{(\E{T})^2}{B_1}\,U^2}\frac{1}{n^{3/2}}\underbrace{\frac{2\mcC
U(\mcA\frac{U}{n}+\mcB)-3} {(2\mcC U(\mcA\frac{U}{n}+\mcB)+1)^2}}_{\sim 1/U},
\end{equation*}
and similarly
\begin{equation*}
\begin{aligned}
\IntegraL{2}{2}&=U^{3/2}\hskip
-6pt\sum_{n>\frac{(\E{T})^2}{B_1}\,U^2}\frac{\pi\sqrt{2\mcC(\mcA U+\mcB n)}
\big(2\mcC U(\mcA U+\mcB n)-3n\big)}{(2\mcC U(\mcA U+\mcB n)+n)^3}
\sim U^{-3/2},
\\
\IntegraL{2}{3}&=U\hskip
-4pt\sum_{n>\frac{(\E{T})^2}{B_1}\,U^2}\frac{\sqrt{n}(6\mcC U(\mcA U+\mcB
n)-n)}{\big(2\mcC U(\mcA U+\mcB n)+n\big)^3} \ln\left(\frac{2\mcC U(\mcA U+\mcB
n)}{n}\right)\sim\ln(U)U^{-2},
\\
\IntegraL{2}{4}&=U\hskip -4pt\sum_{n>\frac{(\E{T})^2}{B_1}\,U^2}\frac{(\mcA\mcB
n-\mcC^2U)(2\mcC U(\mcA U+\mcB n)+3n) +4n\sqrt{2\mcA\mcC U(\mcA\mcB
n-\mcC^2U)}} {\sqrt{n}\big(\sqrt{2\mcA\mcC U}+\sqrt{\mcA\mcB n-\mcC^2
U}\big)^2\big(2\mcC U(\mcA U+\mcB n)-n\big)^2} \sim U^{-1},
\\
\IntegraL{2}{5}&=U^{3/2}\hskip -4pt\sum_{n>\frac{(\E{T})^2}{B_1}\,U^2}
\frac{\sqrt{2\mcC}\sqrt{\mcB n+\mcA U}\big(2\mcC U(\mcA U+\mcB
n)+3n\big)}{(2\mcC U(\mcA U+\mcB
n)-n)^3}\ln\left(-\frac{\NomiN{2}}{\DeNomiN{2}}\right) \sim U^{-3/2},
\\
\IntegraL{2}{6}&=U\hskip -6pt\sum_{n>\frac{(\E{T})^2}{B_1}\,U^2}\hskip
-6pt\frac{\sqrt{n}(n+6\mcC U(\mcB n+\mcA U))}{(2\mcC U(\mcA U+\mcB
n)-n)^3}\ln\left(\frac{\NomiN{3}}{\DeNomiN{3}}\right) \sim\ln(U)U^{-2},
\end{aligned}
\end{equation*}
as $U\to\infty$. The proof is complete.

\subsection{The asymptotic behavior of $\Integral{3}$}\label{werrtyjrtjrtrjk}

Let us verify that
\begin{equation*}
\Integral{3}=U\sum_{n=\EnOne}^{\infty}n^{-3/2}\int_{L}^{\infty}(y+R)^{-1}(y^2+M^2)^{-3/2}dy
=\underline{O}\bigg(\dfrac{\ln(U)}{U}\bigg),\ U\to\infty,
\end{equation*}
where
\begin{equation}\label{sdwrtfyju}
\begin{aligned}
R&=\bigg(\frac{\mcA+\mcC
\cK{}}{1+\cK{2}}\bigg)\frac{U}{\sqrt{n}}+\frac{\mcB}{1+\cK{2}}\sqrt{n},\quad
M=\bigg(\frac{\mcC-\mcA \cK{}}{1+\cK{2}}\bigg)\frac{U}{\sqrt{n}}-\frac{\mcB
\cK{}}{1+\cK{2}}\sqrt{n},
\\[2pt]
L&=\bigg(\frac{c\,\E{T}\mcC^2}{\E{Y}\mcA} -\frac{\cK{}(\mcC-\mcA
\cK{})}{1+\cK{2}} \bigg)\frac{U}{\sqrt{n}}-\frac{\mcB}{1+\cK{2}}\sqrt{n},
\end{aligned}
\end{equation}
i.e., for $R=\frac{\tilde{R}+\cK{}\tilde{M}}{1+\cK{2}}$,
$M=\frac{\tilde{M}-\cK{}\tilde{R}}{1+\cK{2}}$,
$L=\tilde{L}-\cK{}\frac{\tilde{M}-\cK{}\tilde{R}}{1+\cK{2}}$ with $\tilde{R}$,
$\tilde{M}$, $\tilde{L}$ defined in \eqref{sdfghffghjfgj} with
$\mcA=\frac{\E{T}}{\sqrt{B_1}}\frac{\sqrt{\D{Y}}}{c\sqrt{\D{T}}}>0$,
$\mcB=\frac{\sqrt{B_1}}{\sqrt{\D{Y}\D{T}}}>0$,
$\mcC=\frac{\E{Y}}{c\sqrt{B_1}}>0$.

Put these $R$, $M$, and $L$ into the integral evaluated in
Lemma~\ref{rthfgnmhfgj}. It is checked by direct calculations that\footnote{It
is noteworthy that $\mcC U-\cK{}(\mcB n+\mcA U)=(\mcC-\cK{}\mcA)U-\cK{}\mcB
n>0$ since $\cK{}<0$ (see \eqref{wqqgbdf}).}:
\begin{equation*}
\begin{aligned}
&\frac{Un^{-3/2}R}{M^2(R^2+M^2)} \eqOK\frac{U(1+\cK{2})^2(\mcB
n+(\mcA+\cK{}\mcC)U)}{ (\mcC U-\cK{}(\mcB n+\mcA U))^2((\mcB n+\mcA
U)^2+\mcC^2U^2)},
\\[10pt]
&\frac{Un^{-3/2}(M^2+LR)} {M^2\sqrt{L^2+M^2}\,(R^2+M^2)}
\myEq\frac{(1+\cK{2})^2U}{(\mcC U-\cK{}(\mcB n+\mcA U))^2\big((\mcB n+\mcA
U)^2+\mcC^2U^2\big)}\frac{\NomiN{4}}{\DeNomiN{4}},
\\[10pt]
&\frac{Un^{-3/2}}{(R^2+M^2)^{3/2}}
\ln\left(\frac{M^2-LR+\sqrt{L^2+M^2}\sqrt{R^2+M^2}}{(L+R)\,\big(\sqrt{R^2+M^2}-R\,\big)}\right)
\myEq\frac{U{(1+\cK{2})}^{3/2}\ln\left({\NomiN{5}}/{\DeNomiN{5}}\right)}{((\mcB
n+\mcA U)^2+\mcC^2U^2)^{3/2}},
\end{aligned}
\end{equation*}
where
\begin{equation*}
\begin{aligned}
\NomiN{4}&=\mcA\mcC^2U^2-(\mcB n+\mcA U)(\mcA\mcB n-c\tfrac{\E{T}}{\E{Y}}\mcC^2
U) +\cK{}(-4\mcA\mcB n-3\mcA^2 U+c\tfrac{\E{T}}{\E{Y}}\mcC^2 U)\mcC U
\\
&+\cK{2}(\mcA\mcB^2n^2+3\mcA^2\mcB nU+\mcB c\tfrac{\E{T}}{\E{Y}}n\mcC^2 U+2\mcA^3U^2+
\mcA c\tfrac{\E{T}}{\E{Y}}\mcC^2U^2-\mcA\mcC^2U^2)
\\
&+\cK{3}(\mcA^2+c\tfrac{\E{T}}{\E{Y}}\mcC^2)\mcC U^2,
\\[6pt]
\DeNomiN{4}&=\big(\big(\mcA(\mcB n+\cK{}\mcC U)-\mcA^2\cK{2}U
-c\tfrac{\E{T}}{\E{Y}}(1+\cK{2})\mcC^2 U\big)^2+\mcA^2(\mcC U-\cK{}(\mcB n+\mcA
U))^2\big)^{1/2}
\end{aligned}
\end{equation*}
and
\begin{equation*}
\begin{aligned}
\NomiN{5}&=\mcB(c\tfrac{\E{T}}{\E{Y}}\mcC^2-\mcA^2)nU+\mcA\mcC^2(c\tfrac{\E{T}}{\E{Y}}-1)U^2
-\mcA\mcB^2n^2+\cK{}(\mcA^2+c\tfrac{\E{T}}{\E{Y}}\mcC^2)\mcC U^2
\\
&-(1+\cK{2})^{-1/2}\big((\mcB n+\mcA U)^2+\mcC^2U^2\big)^{1/2}\big((
\mcA(\mcB n+\cK{}\mcC U)-\mcA^2\cK{2}U
\\
&-c\tfrac{\E{T}}{\E{Y}}(1+\cK{2})\mcC^2U)^2+\mcA^2(\cK{}(\mcB n+\mcA U)-\mcC U)^2\big)^{1/2},
\\[8pt]
\DeNomiN{5}&=(\mcA^2+c\tfrac{\E{T}}{\E{Y}}\mcC^2)U\big(\mcB n+\mcA U
-(1+\cK{2})^{-1/2}((\mcB n+\mcA U)^2+\mcC^2U^2)^{1/2}\big)
\\
&+\cK{}\mcC(\mcA^2+c\tfrac{\E{T}}{\E{Y}}\mcC^2)U^2
-\cK{2}(1+\cK{2})^{-1/2}(\mcA^2+c\tfrac{\E{T}}{\E{Y}}\mcC^2)
\big((\mcB n+\mcA U)^2+\mcC^2U^2\big)^{1/2}U.
\end{aligned}
\end{equation*}

\medskip We have
$\Integral{3}=\IntegraL{3}{1}+\IntegraL{3}{2}+\IntegraL{3}{3}$, where\medskip
\begin{equation*}
\begin{aligned}
\IntegraL{3}{1}&=U\sum_{n>\EnOne}\frac{\mcB n+(\mcA+\cK{}\mcC)U}{(\mcC
U-\cK{}(\mcB n+\mcA U))^2((\mcB n+\mcA U)^2+\mcC^2U^2)} \sim U^{-1},
\\[10pt]
\IntegraL{3}{2}&=U\sum_{n>\EnOne}\frac{1}{(\mcC U-\cK{}(\mcB n+\mcA
U))^2\big((\mcB n+\mcA
U)^2+\mcC^2U^2\big)}\left(\frac{\NomiN{4}}{\DeNomiN{4}}\right) \sim U^{-1},
\end{aligned}
\end{equation*}
and, using \eqref{sdfgrweghsfbh},
\begin{equation*}
\begin{aligned}
\IntegraL{3}{1}&=U\sum_{n>\EnOne}\frac{1}{\big((\mcB n+\mcA
U)^2+\mcC^2U^2\big)^{3/2}}\ln\left(\frac{\NomiN{5}}{\DeNomiN{5}}\right)
\sim\ln(U)U^{-1}.
\end{aligned}
\end{equation*}
The proof is complete.

\subsection{The asymptotic behavior of $\Integral{4}$ and $\Integral{6}$}\label{saerdfgsfh}

Recall (see \eqref{qwrethrj}) that
$\frac{1}{\E{Y}-c\E{T}}=\frac{\mcC-\cK{}\mcA}{\cK{}\mcB}$. The difference
between $\Integral{4}$, $\Integral{6}$ and $\Integral{3}$ lies only in the
range of summation: for $\Integral{4}$ it is
$\EnOne<n<\frac{U}{\E{Y}-c\E{T}}-\frac{K}{\cK{}\mcB}U$, and for $\Integral{6}$
it is $n>\frac{U}{\E{Y}-c\E{T}}+\frac{K}{\cK{}\mcB}U$. The same way as in
analyzing $\Integral{3}$, for $R$, $M$, $L$ set in \eqref{sdwrtfyju}, we turn
to the integral evaluated in Lemma~\ref{rthfgnmhfgj}, and the rest of the proof
consist in examining the asymptotic behavior, as $U\to\infty$, of the sums
similar to those in Section~\ref{werrtyjrtjrtrjk}, e.g., of
\begin{equation*}
\begin{aligned}
&U\sum_{\EnOne<n<\frac{\mcC-\cK{}\mcA}{\cK{}\mcB}U-\frac{K}{\cK{}\mcB}U}\frac{\mcB
n+(\mcA+\cK{}\mcC)U}{(\mcC U-\cK{}(\mcB n+\mcA U))^2((\mcB n+\mcA
U)^2+\mcC^2U^2)}\sim U^{-1},
\\
&U\sum_{n>\frac{\mcC-\cK{}\mcA}{\cK{}\mcB}U+\frac{K}{\cK{}\mcB}U}\frac{\mcB
n+(\mcA+\cK{}\mcC)U}{(\mcC U-\cK{}(\mcB n+\mcA U))^2((\mcB n+\mcA
U)^2+\mcC^2U^2)}\sim U^{-1}.
\end{aligned}
\end{equation*}
Leaving this checking to the reader, we point the main difference: in this case
$\cK{}>0$ and $(\mcC U-\cK{}(\mcB n+\mcA U))^2=((\mcC-\cK{}\mcA)U-\cK{}\mcB
n)^2$ vanishes for $n=\frac{\mcC-\cK{}\mcA}{\cK{}\mcB}U$. But both cases, for
$\Integral{4}$, since
$\EnOne<n<\frac{\mcC-\cK{}\mcA}{\cK{}\mcB}U-\frac{K}{\cK{}\mcB}U$, and for
$\Integral{6}$, since
$n>\frac{\mcC-\cK{}\mcA}{\cK{}\mcB}U+\frac{K}{\cK{}\mcB}U$, we have
\begin{equation*}
(\mcC U-\cK{}(\mcB n+\mcA U))^2=((\mcC-\cK{}\mcA)U-\cK{}\mcB n)^2>K^2U^2.
\end{equation*}

\subsection{The asymptotic behavior of $\Integral{5}$}\label{fghrfhdfd}

The investigation of the asymptotic behavior of
\begin{multline*}
\Integral{5}=U\hskip
-2pt\sum_{\frac{\mcC-\cK{}\mcA}{\cK{}\mcB}U-\frac{K}{\cK{}\mcB}U<n
<\frac{\mcC-\cK{}\mcA}{\cK{}\mcB}U+\frac{K}{\cK{}\mcB}U}\hskip -4pt
n^{-3/2}\Bigg(\int_{0}^{\infty}\big(2MR+y\big)^{-1}
\big(1+\sqrt{y}\big)^{-3}\,dy
\\[-8pt]
+\int_{0}^{-\frac{L}{2M}}\big(2MR-y\big)^{-1}
\big(1+\sqrt{y}\big)^{-3}\,dy\Bigg),
\end{multline*}
where $M$, $R$, and $L$ are defined in \eqref{sdwrtfyju}, is quite analogous to
investigation of the asymptotic behavior of $\Integral{2}$. We leave it to the
reader.

\subsection{The asymptotic behavior of $\Integrall{1}$}\label{sadghjmgh}

We have to verify that
\begin{equation*}
\begin{aligned}
\Integrall{1}&=U\hskip -4pt\sum_{\EnOne<n<\frac{B_2}{B_1}U}
n^{-2}\int_{L}^{\infty}|y|\big(y+R\big)^{-1}\big(y^2+M^2\big)^{-3/2}\,dy
\\[-6pt]
&+U\hskip-4pt\sum_{\frac{B_2}{B_1}U<n<\frac{(\E{T})^2}{B_1}\,U^2}\hskip -2pt
n^{-2}\int_{L}^{\infty}|y|\big(y+R\big)^{-1} \big(y^2+M^2\big)^{-3/2}\,dy
=\underline{O}\bigg(\dfrac{\ln(U)}{U}\bigg),\ U\to\infty,
\end{aligned}
\end{equation*}
for\footnote{In the first sum (with $n<\frac{B_2}{B_1}U$) we have $L\geqslant
0$, while in the second sum (with $n>\frac{B_2}{B_1}U$) we have $L\leqslant
0$.} $R=\mcA\tfrac{U}{\sqrt{n}}+\mcB\sqrt{n}$, $M=\mcC\tfrac{U}{\sqrt{n}}$,
$L=\tfrac{\mcC^2}{\mcA}\tfrac{U}{\sqrt{n}}-\mcB\sqrt{n}$ with
$\mcA,\mcB,\mcC>0$.

Put the above $R$, $M$, and $L$ in the first integral (wherein $L\geqslant 0$)
evaluated in Lemma~\ref{wqetfyuk6}. It is checked by direct calculations that
\begin{equation*}
\begin{aligned}
&\frac{Un^{-2}}{R^2+M^2}\eqOK\frac{U}{n((\mcB n+\mcA U)^2+\mcC^2U^2)},
\\
&\frac{Un^{-2}(R-L)}{(R^2+M^2)\sqrt{M^2+L^2}}\eqOK\frac{U(2\mcA\mcB
n+(\mcA^2-\mcC^2)U)}{n\big((\mcB n+ \mcA U)^2+\mcC^2U^2\big)\sqrt{(\mcA\mcB
n-\mcC^2U)^2+\mcA^2\mcC^2U^2}},
\\
&\frac{Un^{-2}R}{(R^2+M^2)^{3/2}}
\ln\left(\frac{(R+L)(\sqrt{R^2+M^2}-R)}{M^2-RL+\sqrt{R^2+M^2}\sqrt{M^2+L^2}}\right)
\eqOK\frac{U(\mcB n+\mcA U)\ln(\NomiN{6}/\DeNomiN{6})}{n((\mcB n+\mcA
U)^2+\mcC^2U^2)^{3/2}},
\end{aligned}
\end{equation*}
where
\begin{equation*}
\begin{aligned}
\NomiN{6}&=(\mcA^2+\mcC^2)U(((\mcB n+\mcA U)^2+\mcC^2U^2)^{1/2}-(\mcB n+\mcA
U)),
\\
\DeNomiN{6}&=\mcB(\mcA\mcB n-\mcC^2U)n+\mcA^2\mcB nU+((\mcB n+\mcA
U)^2+\mcC^2U^2)^{1/2}((\mcA\mcB n-\mcC^2U)^2+\mcC^2\mcA^2U^2)^{1/2}.
\end{aligned}
\end{equation*}

Put the above $R$, $M$, and $L$ in the second integral (wherein $L\leqslant 0$)
evaluated in Lemma~\ref{wqetfyuk6}. It is checked by direct calculations that
\begin{equation*}
\begin{aligned}
&\frac{Un^{-2}(2R+M)}{M(R^2+M^2)}\eqOK\frac{2\mcB
n+(2\mcA+\mcC)U}{n\mcC(\mcB^2n^2+ 2\mcA\mcB nU+(\mcA^2+\mcC^2)U^2)},
\\
&\frac{Un^{-2}(R-L)}{(R^2+M^2)\sqrt{M^2+L^2}}\eqOK\frac{U(2\mcA\mcB
n+(\mcA^2-\mcC^2)U)}{n\big((\mcB n+ \mcA U)^2+\mcC^2U^2\big)\sqrt{(\mcA\mcB
n-\mcC^2U)^2+\mcA^2\mcC^2U^2}},
\\
&\frac{Un^{-2}R}{(R^2+M^2)^{3/2}}\ln\left(\frac{R^2(\sqrt{R^2+M^2}-R)(M^2-RL+\sqrt{R^2+M^2}\sqrt{M^2+L^2})}
{M^2(M+\sqrt{R^2+M^2})^2(R+L)}\right)
\\
&\hskip 40pt\eqOK\frac{U(\mcB n+\mcA U)\ln(-\NomiN{7}/\DeNomiN{7})}{n((\mcB
n+\mcA U)^2+\mcC^2U^2)^{3/2}},
\end{aligned}
\end{equation*}
where
\begin{equation*}
\begin{aligned}
\NomiN{7}&=(\mcB n+\mcA U)^2(\mcB n+\mcA U-((\mcB n+\mcA
U)^2+\mcC^2U^2)^{1/2})(\mcA\mcB^2n^2+\mcA^2\mcB nU-\mcB\mcC^2 nU
\\
&+((\mcB n+\mcA U)^2+\mcC^2U^2)^{1/2}((\mcA\mcB
n-\mcC^2U)^2+\mcA^2\mcC^2U^2)^{1/2}),
\\
\DeNomiN{7}&=\mcC^2(\mcA^2+\mcC^2)U^3(\mcC U+((\mcB n+\mcA
U)^2+\mcC^2U^2)^{1/2})^2.
\end{aligned}
\end{equation*}

We have
$\Integrall{1}=\IntegralL{1}{1}+\IntegralL{1}{2}+\IntegralL{1}{3}+\IntegralL{1}{4}+\IntegralL{1}{5}+\IntegralL{1}{6}$,
where
\begin{equation*}
\begin{aligned}
\IntegralL{1}{1}&=U\sum_{\EnOne<n<\frac{B_2}{B_1}U}\frac{1}{n((\mcB n+\mcA
U)^2+\mcC^2U^2)} \sim U^{-1},
\\
\IntegralL{1}{2}&=U\sum_{\EnOne<n<\frac{B_2}{B_1}U}\frac{2\mcA\mcB
n+(\mcA^2-\mcC^2)U}{n\big((\mcB n+ \mcA U)^2+\mcC^2U^2\big)\sqrt{(\mcA\mcB
n-\mcC^2U)^2+\mcA^2\mcC^2U^2}}\sim U^{-1},
\\
\IntegralL{1}{*3}&=U\sum_{\EnOne<n<\frac{B_2}{B_1}U}\frac{(\mcB n+\mcA
U)\ln(\NomiN{6}/\DeNomiN{6})}{n((\mcB n+\mcA U)^2+\mcC^2U^2)^{3/2}}\sim
\ln(U)U^{-1},
\end{aligned}
\end{equation*}
and
\begin{equation*}
\begin{aligned}
\\
\IntegralL{1}{4}&=\sum_{\frac{B_2}{B_1}U<n<\frac{(\E{T})^2}{B_1}\,U^2}\frac{2\mcB
n+(2\mcA+\mcC)U}{n\mcC(\mcB^2n^2+ 2\mcA\mcB nU+(\mcA^2+\mcC^2)U^2)}\sim U^{-1},
\\
\IntegralL{1}{5}&=U\sum_{\frac{B_2}{B_1}U<n<\frac{(\E{T})^2}{B_1}\,U^2}\frac{2\mcA\mcB
n+(\mcA^2-\mcC^2)U}{n\big((\mcB n+ \mcA U)^2+\mcC^2U^2\big)\sqrt{(\mcA\mcB
n-\mcC^2U)^2+\mcA^2\mcC^2U^2}}\sim U^{-1},
\\
\IntegralL{1}{6}&=U\sum_{\frac{B_2}{B_1}U<n<\frac{(\E{T})^2}{B_1}\,U^2}\frac{(\mcB
n+\mcA U)\ln(\NomiN{6}/\DeNomiN{6})}{n((\mcB n+\mcA U)^2+\mcC^2U^2)^{3/2}}\sim
\ln(U)U^{-1}.
\end{aligned}
\end{equation*}
The proof is complete.

\subsection{The asymptotic behavior of $\Integrall{2}$}\label{wertkktukt}

We have to verify that
\begin{equation*}
\begin{aligned}
\Integrall{2}&=U\hskip -8pt\sum_{n>\frac{(\E{T})^2}{B_1}\,U^2}\hskip -4pt
n^{-2}\int_{L}^{\infty}|y|\big(y+R\big)^{-1}\big(1+(2M|y|)^{1/2}\big)^{-3}\,dy
=\underline{O}\bigg(\dfrac{\ln(U)}{U}\bigg),\ U\to\infty,
\end{aligned}
\end{equation*}
for\footnote{In the sum with $n>\frac{(\E{T})^2}{B_1}\,U^2$ we have $L<0$.}
$R=\mcA\tfrac{U}{\sqrt{n}}+\mcB\sqrt{n}$, $M=\mcC\tfrac{U}{\sqrt{n}}$,
$L=\tfrac{\mcC^2}{\mcA}\tfrac{U}{\sqrt{n}}-\mcB\sqrt{n}$ with
$\mcA,\mcB,\mcC>0$.

By making the change of variables, rewrite the integral in $\Integrall{2}$ as
\begin{equation*}
\frac{1}{2M}\int_{\frac{L}{2M}}^{\infty}|y|\big(y+2MR\big)^{-1}\big(1+\sqrt{|y|}\big)^{-3}\,dy
\end{equation*}
and recall that it is evaluated in explicit form in Lemma~\ref{eruyjtky}. Put
the above $R$, $M$, and $L$ into thus modified $\Integrall{2}$. It is checked
by direct calculations that
\begin{equation*}
\begin{aligned}
&\frac{U}{2Mn^2}\frac{5K+1}{(1+K)^2}\,\Big|_{K=2MR}\eqOK\frac{n+10\mcB\mcC nU+
10\mcA\mcC U^2}{2\sqrt{n}\mcC(n+2\mcB\mcC nU+2\mcA\mcC U^2)^2},
\\[6pt]
&\frac{U}{2Mn^2}\frac{\pi
K^{3/2}(K-3)}{(1+K)^3}\,\bigg|_{K=2MR}\eqOK\pi\sqrt{2}\frac{(\mcC U(\mcB n+\mcA
U))^{3/2}(2\mcA\mcC U^2+n(2\mcB\mcC U-3))}{n\mcC(n+2\mcB\mcC nU+2\mcA\mcC
U^2)^3},
\\[6pt]
&\frac{U}{2Mn^2}\frac{5K-1}{(K-1)^2}\,\bigg|_{K=2MR}\eqOK\frac{10\mcA\mcC U^2+
n(10\mcB\mcC U-1)}{2\sqrt{n}\mcC(2\mcA\mcC U^2+n(2\mcB\mcC U-1))^2},
\end{aligned}
\end{equation*}
\begin{equation*}
\begin{aligned}
\frac{U}{2Mn^2}&\frac{2\sqrt{P}(1-3K)-(5K-1)}{(K-1)^2(1+\sqrt{P})^2}
\,\bigg|_{\substack{P=-\frac{L}{2M}\\K=2MR}}
\\
&\hskip 30pt\eqOK-\frac{\sqrt{2}{\sqrt{\mcA U}}{\sqrt{\mcA\mcB n-\mcC^2 U}}
(6\mcC U(\mcA U+\mcB n)-n)}{\sqrt{n\mcC}(\sqrt{2\mcA\mcC U}+\sqrt{\mcA\mcB
n-\mcC^2 U})^2(2\mcC U(\mcA U+\mcB n)-n)^2}
\\
&\hskip 30pt+\frac{\mcA U(n-10\mcC U(\mcB n+\mcA U))}{\sqrt{n}(\sqrt{2\mcA\mcC
U}+\sqrt{\mcA\mcB n-\mcC^2U})^2(2\mcC U(\mcA U+\mcB n)-n)^2}
\end{aligned}
\end{equation*}
and (cf. \eqref{saesrfgewge} and \eqref{qwergehn})
\begin{equation*}
\begin{aligned}
&\frac{U}{2Mn^2}\frac{K(3K-1)}{(1+K)^3}\ln(K)\,\bigg|_{K=2MR}
\\
&\hskip 40pt\eqOK U\frac{\mcB n+\mcA U}{\sqrt{n}}\frac{(6\mcC U(\mcA U+\mcB
n)-n)}{\big(2\mcC U(\mcA U+\mcB n)+n\big)^3}\ln\left(\frac{2\mcC U(\mcA U+\mcB
n)}{n}\right),
\\[6pt]
&\frac{U}{2Mn^2}\frac{K^{3/2}(3+K)}{(K-1)^3}
\ln\left(\frac{\sqrt{P}+\sqrt{K}}{\sqrt{K}-\sqrt{P}}\right)\,\bigg|_{\substack{P=-\frac{L}{2M}\\K=2MR}}
\\
&\hskip 40pt\eqOK U^{3/2}\frac{\mcB+\mcA U}{n}\frac{\sqrt{2\mcC}\sqrt{\mcB
n+\mcA U}\big(2\mcC U(\mcA U+\mcB n)+3n\big)} {(2\mcC U(\mcA U+\mcB
n)-n)^3}\ln\left(-\frac{\NomiN{2}}{\DeNomiN{2}}\right),
\\[6pt]
&\frac{U}{2Mn^2}\frac{K(3K+1)}{(K-1)^3}\ln\left(\frac{K-P}
{K(1+\sqrt{P})^2}\right)\,\bigg|_{\substack{P=-\frac{L}{2M}\\K=2MR}}
\\
&\hskip 40pt\eqOK U\frac{\mcB+\mcA U}{\sqrt{n}}\frac{(n+6\mcC U(\mcB n+\mcA
U))}{(2\mcC U(\mcA U+\mcB
n)-n)^3}\ln\left(\frac{\NomiN{3}}{\DeNomiN{3}}\right).
\end{aligned}
\end{equation*}

We have $\Integrall{2}=\IntegralL{2}{1}+\IntegralL{2}{2}+\IntegralL{2}{3}
+\IntegralL{2}{4}+\IntegralL{2}{5}+\IntegralL{2}{6}+\IntegralL{2}{7}$, where,
e.g.,
\begin{equation*}
\begin{aligned}
\IntegralL{2}{1}&=\sum_{n>\frac{(\E{T})^2}{B_1}\,U^2}\frac{n+10\mcB\mcC nU+
10\mcA\mcC U^2}{2\sqrt{n}\mcC(n+2\mcB\mcC nU+2\mcA\mcC U^2)^2}\sim U^{-1},
\\[0pt]
\IntegralL{2}{2}&=\pi\sqrt{2}\sum_{n>\frac{(\E{T})^2}{B_1}\,U^2}\frac{(\mcC
U(\mcB n+\mcA U))^{3/2}(2\mcA\mcC U^2+n(2\mcB\mcC U-3))}{n\mcC(n+2\mcB\mcC
nU+2\mcA\mcC U^2)^3}\sim U^{-1},
\end{aligned}
\end{equation*}
and so on, so that $\Integrall{2}\sim \ln(U)U^{-1}$, $U\to\infty$, as required.
The proof is complete.

\subsection{Asymptotics for integrals of rational and exponential functions}\label{qweryherh}

The following integrals of rational and exponential functions can be evaluated
in explicit form in terms of $\Gamma(0,-\tfrac12
x^2)=\int_{-x^2/2}^{\infty}t^{-1}e^{-t}dt$ and
$\text{Erfi}(x)=\frac{2}{\sqrt{\pi}}e^{x^2}D(x)$, where
$D(x)=e^{-x^2}\int_{0}^{x^2}e^{t^2}dt$ is Dawson function. Recall that for
$x>0$
\begin{equation}\label{wsdfghfn}
D(x)=\frac{1}{2x}+\frac{1}{4x^3}+\frac{3}{8x^5}+\dots,\quad x\to+\infty,
\end{equation}
and
\begin{equation}\label{wqthdjnr}
\Gamma(0,-\tfrac12 x^2)=(-\tfrac12 x^2)^{-1}e^{\tfrac12
x^2}\Big[1+\frac{2}{x^2}+\frac{8}{x^4}+\dots\Big],\quad x\to+\infty,
\end{equation}
so that $\tfrac12 e^{-\frac12 x^2}\Gamma(0,-\frac12 x^2)\sim-x^{-2}$, as
$x\to+\infty$.

\begin{lemma}\label{rtyi6k6k}
For $R>0$, we have
\begin{equation*}
\int_{0}^{\infty}(y+R)^{-1}\exp\big\{-\tfrac{1}{2}\,y^2\big\}dy=\underline{O}(R^{-1}),\quad
R\to\infty.
\end{equation*}
\end{lemma}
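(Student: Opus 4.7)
The plan is to exploit the elementary pointwise bound $(y+R)^{-1}\le R^{-1}$, which holds uniformly for $y\ge 0$. This reduces the estimate to the standard Gaussian integral and immediately yields
\begin{equation*}
\int_{0}^{\infty}(y+R)^{-1}\exp\big\{-\tfrac{1}{2}y^2\big\}\,dy \le \frac{1}{R}\int_{0}^{\infty}\exp\big\{-\tfrac{1}{2}y^2\big\}\,dy = \frac{1}{R}\sqrt{\tfrac{\pi}{2}},
\end{equation*}
which is the claimed $\underline{O}(R^{-1})$ bound as $R\to\infty$.

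If a matching asymptotic equivalence (rather than merely an upper bound) is required in later applications, the substitution $u=y/R$ converts the integral into $\int_{0}^{\infty}(u+1)^{-1}\exp\{-R^{2}u^{2}/2\}\,du$, after which Laplace's method (the Gaussian concentrates at $u=0$ where $(u+1)^{-1}\to 1$) gives $I(R)\sim R^{-1}\sqrt{\pi/2}$; a full asymptotic series in $R^{-1}$ follows by Taylor expanding $(u+1)^{-1}$ under the integral sign. Alternatively, an exact closed-form expression in terms of Dawson's function $D(\cdot)$ or the incomplete gamma function $\Gamma(0,-\tfrac12 R^{2})$ can be derived by writing $(y+R)^{-1}=\int_{0}^{\infty}e^{-(y+R)s}\,ds$, exchanging the order of integration, and performing the Gaussian integration in $y$; the requisite asymptotics are then read off from \eqref{wsdfghfn} and \eqref{wqthdjnr}.

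There is no substantive obstacle here: the lemma is a routine one-line bound that serves as a building block for the further integral estimates of Section~\ref{qweryherh}. The only feature worth emphasising is that the bound $(y+R)^{-1}\le R^{-1}$ is applied uniformly on $[0,\infty)$ without any splitting of the domain of integration, and the Gaussian factor alone supplies the absolute integrability; no tail analysis or careful Mills-ratio estimate is needed for the stated order of magnitude.
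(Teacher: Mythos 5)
Your proof is correct, and it takes a genuinely more elementary route than the paper. You use the uniform bound $(y+R)^{-1}\leqslant R^{-1}$ on $[0,\infty)$, so the integral is at most $R^{-1}\int_{0}^{\infty}e^{-y^{2}/2}\,dy=\sqrt{\pi/2}\,R^{-1}$, which is all the lemma asserts. The paper instead evaluates the integral in closed form, $\tfrac12 e^{-R^{2}/2}\big[\pi\,\mathrm{Erfi}\big(\tfrac{R}{\sqrt{2}}\big)+\Gamma(0,-\tfrac12 R^{2})-\ln R\big]$, and reads off the precise asymptotics $\sim\tfrac{\sqrt{\pi}}{\sqrt{2}}R^{-1}$ from the expansions \eqref{wsdfghfn} and \eqref{wqthdjnr} of the Dawson function and the incomplete gamma function; that buys the exact main term with its constant and the same machinery is reused for Lemma~\ref{6uiytkty}, while your argument buys brevity. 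In fact the paper itself remarks, right after Lemma~\ref{6uiytkty}, that for the stated order of magnitude a De Bruijn-style localization (the Gaussian concentrates near the origin, where $(y+R)^{-1}$ is of order $R^{-1}$) suffices; your argument is an even more direct version of this, applying the bound uniformly without any splitting, which is legitimate here precisely because the lower limit is $0$ — note that for the companion Lemma~\ref{wertejht}, where the lower limit $L$ may be negative with only $L+R>0$ assumed, the crude global bound no longer gives $O(R^{-1})$ by itself and some localization is genuinely needed. Your supplementary Laplace-method computation of the sharp asymptotics $\sim\sqrt{\pi/2}\,R^{-1}$ agrees with the paper's constant, so nothing is lost even under the stronger reading of the claim as an exact-order statement.
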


\begin{proof}
Bearing in mind \eqref{wsdfghfn} and \eqref{wqthdjnr}, for $R\to\infty$ we
have\footnote{Note that $\text{Erfi}(R/\sqrt{2})=\frac{2}{\sqrt{\pi}}e^{\frac12 R^2}D(R/\sqrt{2})$.}
\begin{multline*}
\int_{0}^{\infty}(y+R)^{-1}\exp\big\{-\tfrac{1}{2}\,y^2\big\}dy=\tfrac{1}{2}
e^{-\frac12 R^2}\big[\pi\text{Erfi}\big(\tfrac{1}{\sqrt{2}}R\big)+\,\Gamma(0,-\tfrac12
R^2)-\ln(R)\big]
\\
=\sqrt{\pi}D\big(\tfrac{1}{\sqrt{2}}R\big)+\tfrac{1}{2}e^{-\frac12
R^2}\big[\Gamma(0,-\tfrac12 R^2)-\ln(R)\big]\sim\tfrac{\sqrt{\pi}}{\sqrt{2}}R^{-1},
\end{multline*}
which completes the proof.
\end{proof}

\begin{lemma}\label{6uiytkty}
For $R>0$, we have
\begin{equation*}
\int_{0}^{\infty}y\,(y+R)^{-1}\exp\big\{-\tfrac{1}{2}\,y^2\big\}dy=\underline{O}(R^{-1}),\quad
R\to\infty.
\end{equation*}
\end{lemma}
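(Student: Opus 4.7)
The plan is to obtain the bound by an elementary pointwise estimate on the integrand, avoiding the Dawson/incomplete gamma machinery used in Lemma~\ref{rtyi6k6k}. For $y\geqslant 0$ and $R>0$ one has the trivial inequality $y/(y+R)\leqslant y/R$, so
\begin{equation*}
\int_{0}^{\infty} y\,(y+R)^{-1}\exp\{-\tfrac{1}{2}y^2\}\,dy
\;\leqslant\; \frac{1}{R}\int_{0}^{\infty} y\,\exp\{-\tfrac{1}{2}y^2\}\,dy
\;=\; \frac{1}{R},
\end{equation*}
the last integral being computed by the substitution $u=y^2/2$. This already supplies the required $\underline{O}(R^{-1})$ bound, uniformly in $R>0$, with implicit constant $1$, and in particular as $R\to\infty$.

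For completeness, a route that parallels the proof of Lemma~\ref{rtyi6k6k} would write $y/(y+R)=1-R/(y+R)$, yielding
\begin{equation*}
\int_{0}^{\infty}\frac{y}{y+R}\,e^{-y^2/2}\,dy
= \sqrt{\tfrac{\pi}{2}} - R\int_{0}^{\infty}(y+R)^{-1}e^{-y^2/2}\,dy,
\end{equation*}
and then use the explicit identity derived in the proof of Lemma~\ref{rtyi6k6k},
$\int_{0}^{\infty}(y+R)^{-1}e^{-y^2/2}\,dy=\sqrt{\pi}\,D(R/\sqrt{2})+\tfrac{1}{2}e^{-R^2/2}[\Gamma(0,-R^2/2)-\ln R]$,
in combination with the asymptotic expansions \eqref{wsdfghfn}--\eqref{wqthdjnr}, from which $R$ times the right-hand side equals $\sqrt{\pi/2}+O(R^{-2})$. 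The cancellation then reveals the sharper asymptotic $\underline{O}(R^{-2})$, but since only $\underline{O}(R^{-1})$ is claimed, this refinement is not needed for the intended application.

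There is essentially no obstacle: the only point where care is required is to resist the temptation to mimic Lemma~\ref{rtyi6k6k} verbatim, which would draw in Dawson and incomplete gamma function calculations that are unnecessary here. Dominating $y/(y+R)$ by $y/R$ exploits exactly the presence of the extra factor $y$ in the numerator (absent in Lemma~\ref{rtyi6k6k}), which is what makes the bound of order $R^{-1}$ automatic rather than the result of a delicate cancellation.
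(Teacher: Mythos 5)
Your main argument is correct and is genuinely more elementary than the paper's: you dominate $y/(y+R)\leqslant y/R$ and evaluate $\int_{0}^{\infty}y\,e^{-y^{2}/2}\,dy=1$, giving the bound $R^{-1}$ with explicit constant $1$, uniformly in $R>0$. The paper instead evaluates the integral in closed form via $\mathrm{Erfi}$, the Dawson function $D$, and $\Gamma(0,-\tfrac12R^{2})$, and then reads off the asymptotics from the expansions \eqref{wsdfghfn}--\eqref{wqthdjnr}; what that heavier route buys is the exact leading behaviour (the integral is genuinely $\sim R^{-1}$), so the stated order is sharp, whereas your one-line estimate delivers only the upper bound --- which is all that Lemma~\ref{6uiytkty} claims and all that is used downstream. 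One caveat: the aside in your second paragraph is wrong. Writing $y/(y+R)=1-R/(y+R)$ and using $\int_{0}^{\infty}(y+R)^{-1}e^{-y^{2}/2}\,dy=\sqrt{\pi/2}\,R^{-1}-R^{-2}+\underline{O}(R^{-3})$, one gets $R$ times that integral equal to $\sqrt{\pi/2}-R^{-1}+\underline{O}(R^{-2})$, not $\sqrt{\pi/2}+\underline{O}(R^{-2})$; the cancellation therefore leaves a term of exact order $R^{-1}$ (indeed a direct expansion gives the integral $=R^{-1}-\sqrt{\pi/2}\,R^{-2}+\dots$), consistent with the paper's conclusion $\sim R^{-1}$ and with your own upper bound. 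Since that remark is not needed for your proof, the proof stands, but the claimed refinement to $\underline{O}(R^{-2})$ should be deleted.
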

\begin{proof}
Bearing in mind \eqref{wsdfghfn} and \eqref{wqthdjnr}, for $R\to\infty$ we have
\begin{multline*}
\int_{0}^{\infty}y\,(y+R)^{-1}\exp\big\{-\tfrac{1}{2}\,y^2\big\}dy=\tfrac{1}{2}e^{-\frac12
R^2}\big[e^{\frac12 R^2}\sqrt{2\pi}
-\pi R\,\text{Erfi}\big(\tfrac{1}{\sqrt{2}}R\big) -R\,\Gamma(0,-\tfrac12R^2)
\\
+2R\ln(R)\,\big]=\tfrac{\sqrt{\pi}}{\sqrt{2}}\big[1-\sqrt{2}R\,D\big(\tfrac{1}{\sqrt{2}}R\big)\,\big]
-\tfrac{R}{2}e^{-\frac12R^2}\big[\Gamma(0,-\tfrac12R^2)-2\ln(R)\,\big]\sim
R^{-1},
\end{multline*}
which completes the proof.
\end{proof}

Both Lemmas~\ref{rtyi6k6k} and \ref{6uiytkty} are proved by means of very
precise calculations which yield exactly the main terms of approximations,
rather than investigate the asymptotic behavior. Investigation of mere the
asymptotic behavior can be done much simpler (see, e.g., \citeNP{[De Bruijn
1958]}). Indeed, the function $\exp\{-\tfrac12 y^2\}$ is concentrated in a
narrow region around the origin. For the remaining factor in the integrand,
note that in this region $0<K_1 R^{-1}\leqslant\big(y+R\big)^{-1}\leqslant K_2
R^{-1}$. Routine estimation completes the proof. Using these considerations, it
is easy to check the following lemma.

\begin{lemma}\label{wertejht}
For $L+R>0$, we have
\begin{equation*}
\begin{gathered}
\int_{L}^{\infty}(y+R)^{-1}\exp\big\{-\tfrac{1}{2}\,y^2\big\}dy=\underline{O}(R^{-1}),\quad
R\to\infty,
\\
\int_{L}^{\infty}|y|(y+R)^{-1}\exp\big\{-\tfrac{1}{2}\,y^2\big\}dy=\underline{O}(R^{-1}),\quad
R\to\infty.
\end{gathered}
\end{equation*}
\end{lemma}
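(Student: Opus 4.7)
The plan is to extend Lemmas~\ref{rtyi6k6k} and \ref{6uiytkty} (which treat the case $L=0$) to general $L$ with $L+R>0$, using the Gaussian concentration of $e^{-y^{2}/2}$ near the origin, as sketched in the remark preceding the statement.

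The case $L \geqslant 0$ is immediate, since $\int_{L}^{\infty} \leqslant \int_{0}^{\infty}$, so the bound $\underline{O}(R^{-1})$ follows directly from Lemma~\ref{rtyi6k6k} for the first assertion and from Lemma~\ref{6uiytkty} for the second. For $L < 0$, I would decompose $\int_{L}^{\infty} = \int_{L}^{0} + \int_{0}^{\infty}$; the second summand is $\underline{O}(R^{-1})$ by the previous lemmas, and it remains to estimate $\int_{L}^{0}$.

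To handle $\int_{L}^{0}$, fix a constant $A>0$ and assume $R>2A$. If $-A \leqslant L < 0$, the uniform bound $(y+R)^{-1} \leqslant (R-A)^{-1} = \underline{O}(R^{-1})$ holds on $[L,0]$, and multiplying by $\int_{-A}^{0} e^{-y^{2}/2}\,dy \leqslant \sqrt{2\pi}/2$ gives the claim. If $L < -A$, split further as $\int_{L}^{-A} + \int_{-A}^{0}$; the second integral is as just treated, while on $[L,-A]$ one combines $(y+R)^{-1} \leqslant (L+R)^{-1}$ with the Gaussian tail estimate $\int_{-\infty}^{-A} e^{-y^{2}/2}\,dy \leqslant K A^{-1} e^{-A^{2}/2}$ to obtain a contribution that is subordinate to $R^{-1}$. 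The second assertion, involving the extra factor $|y|$, is handled identically since $|y| e^{-y^{2}/2}$ is also concentrated near the origin and integrable; alternatively, one uses $|y| \leqslant |L| + 1$ on $[L, 0]$ combined with the same Gaussian tail control.

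The only real subtlety is the implicit requirement that $L+R$ does not collapse to zero faster than $R^{-1}e^{A^{2}/2}$, so that the left-tail estimate is genuinely of order $R^{-1}$. This is automatic in every invocation of Lemma~\ref{wertejht} in the paper: in the notation of \eqref{sdfghffghjfgj}, each application has $L+R = \tilde{L}+\tilde{R}$, which, as shown in \eqref{sdfghffghjfgj}, equals $(c\E{T}\mcC^{2}+\E{Y}\mcA^{2}) U /(\E{Y}\mcA\sqrt{n})$ and is therefore comparable to $R$ itself. Hence no pathological shrinking of $L+R$ can occur, and the argument yields the uniform bound $\underline{O}(R^{-1})$ as required.
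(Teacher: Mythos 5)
Your reduction to Lemmas~\ref{rtyi6k6k} and \ref{6uiytkty}, and your treatment of $L\geqslant 0$ and of $-A\leqslant L<0$, follow the paper's sketch and are fine. The genuine gap is the piece $\int_{L}^{-A}$ when $L$ is very negative: bounding $(y+R)^{-1}\leqslant(L+R)^{-1}$ and the Gaussian tail by the constant $KA^{-1}e^{-A^{2}/2}$ gives a contribution of order $(L+R)^{-1}$, which is $\underline{O}(R^{-1})$ only if $L+R$ is bounded below by a fixed multiple of $R$ (your stated caveat, $L+R$ not smaller than $R^{-1}e^{A^{2}/2}$, is not the right condition). Your justification that this comparability holds in every invocation of the lemma is false. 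By \eqref{sdfghffghjfgj}, $\tilde{L}+\tilde{R}=\frac{c\E{T}\mcC^{2}+\E{Y}\mcA^{2}}{\E{Y}\mcA}\,\frac{U}{\sqrt{n}}$ while $\tilde{R}=\mcA\frac{U}{\sqrt{n}}+\mcB\sqrt{n}$; in the sums where the lemma is actually used the index $n$ is large compared with $U$ (already for $n$ near $\frac{(\E{T})^{2}}{B_{1}}U^{2}$ in $\Jntegral{1}$, and for all $n>\frac{(\E{T})^{2}}{B_{1}}U^{2}$ in $\Jntegral{2}$ of Section~\ref{ertherhrhr}, as well as in the $c\ne\cS$ analogues), so $L+R=\Theta(U/\sqrt{n})$ stays bounded, indeed tends to $0$, while $R\sim\mcB\sqrt{n}\to\infty$. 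There $L$ is essentially $-R$, $L+R$ is in no way comparable to $R$, and your estimate on $[L,-A]$ does not deliver $\underline{O}(R^{-1})$.

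The repair is to let the splitting point scale with $R$ instead of being a fixed $-A$. On $[\max(L,-R/2),\infty)$ one has $(y+R)^{-1}\leqslant 2R^{-1}$ and $\int_{\R}e^{-y^{2}/2}dy<\infty$, which gives $\underline{O}(R^{-1})$ at once; on $[L,-R/2]$ (when nonempty) one has $e^{-y^{2}/2}\leqslant e^{-R^{2}/8}$ and $\int_{L}^{-R/2}(y+R)^{-1}dy=\ln\frac{R/2}{L+R}$, so this piece is at most $e^{-R^{2}/8}\ln\frac{R}{2(L+R)}$, which is $o(R^{-1})$ under the extremely mild proviso that $L+R$ does not vanish super-exponentially fast in $R$ --- strictly speaking such a proviso (or an understanding that the bound is not claimed uniformly in arbitrary $L$) is needed even for the lemma as stated, and it is amply satisfied in all of the paper's applications, where $\ln\frac{R}{L+R}=\underline{O}(\ln n)$ against $e^{-R^{2}/8}\leqslant e^{-\mcB^{2}n/8}$. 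The integral with the factor $|y|$ is handled by the same splitting, since $|y|e^{-y^{2}/4}$ is bounded, so $|y|e^{-y^{2}/2}\leqslant Ke^{-R^{2}/16}$ on $[L,-R/2]$. With this modification your argument becomes the ``routine estimation'' the paper alludes to; as written, the step on $[L,-A]$ and the comparability claim are incorrect.
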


We merely note that the integrand in Lemma~\ref{wertejht} is positive and has
no singularities within the range of integration $[L,\infty)$. Indeed, the
point of singularity $y=-R$ of $(y+R)^{-1}$ lies outside the range of
integration since $-R<L$.

\subsection{The asymptotic behavior of $\Jntegral{1}$}\label{werfgwegfwe}

Let us verify that
\begin{equation*}
\Jntegral{1}=U\hskip
-6pt\sum_{\EnOne<n<\frac{(\E{T})^2}{B_1}U^2}n^{-3/2}\exp\big\{-\tfrac12
M^2\big\}\int_{L}^{\infty}\big(y+R\big)^{-1}\exp\big\{-\tfrac12\,y^2\big\}\,dy=\underline{O}(U^{-1}),\
U\to\infty,
\end{equation*}
for $R=\mcA\tfrac{U}{\sqrt{n}}+\mcB\sqrt{n}$, $M=\mcC\tfrac{U}{\sqrt{n}}$,
$L=\tfrac{\mcC^2}{\mcA}\tfrac{U}{\sqrt{n}}-\mcB\sqrt{n}$ with
$\mcA,\mcB,\mcC>0$. We use Lemma~\ref{wertejht}, note that
\begin{equation*}
\frac{Un^{-3/2}}{R}\exp\big\{-\tfrac12 M^2\big\}=\frac{U}{n\big(\mcA U+\mcB
n\big)} \exp\Big\{-\frac{C^2U^2}{2n}\Big\}
\end{equation*}
and that
\begin{multline*}
U\sum_{\EnOne<n<\frac{(\E{T})^2}{B_1}U^2}\frac{1}{n\big(\mcA U+\mcB n\big)}
\exp\Big\{-\frac{C^2U^2}{2n}\Big\}
\\[-8pt]
=\frac{1}{U}\sum_{\EnOne<n<\frac{(\E{T})^2}{B_1}U^2}\frac{\frac{U^2}{n^2}}{\mcA
\frac{U}{n}+\mcB}
\exp\Big\{-\frac{C^2}{2}\frac{U^2}{n}\Big\}=\underline{O}(U^{-1}),
\end{multline*}
as $U\to\infty$, as required.

\subsection{The asymptotic behavior of $\Jntegral{2}$}\label{ertherhrhr}

We have to check that
\begin{equation*}
\Jntegral{2}=U\sum_{n>\frac{(\E{T})^2}{B_1}U^2}n^{-3/2}
\int_{L}^{\infty}\big(y+R\big)^{-1}\exp\big\{-\tfrac12\,y^2\big\}\,dy=\underline{O}(U^{-1}),\
U\to\infty,
\end{equation*}
for $R=\mcA\tfrac{U}{\sqrt{n}}+\mcB\sqrt{n}$ with $\mcA,\mcB>0$. The same way
as in Section \ref{werfgwegfwe}, we have
\begin{multline*}
U\sum_{n>\frac{(\E{T})^2}{B_1}U^2}n^{-3/2}\big(\mcA\tfrac{U}{\sqrt{n}}+\mcB\sqrt{n}\big)^{-1}
\exp\left\{-\mcC^2\,\tfrac{U^2}{n}\right\}
\\[-4pt]
\leqslant
U\sum_{n>\frac{(\E{T})^2}{B_1}U^2}\frac{1}{n^2}\frac{\exp\{-\mcC^2\,\tfrac{U^2}{n}\}}{1+\frac{U}{n}}
\leqslant
KU\sum_{n>\frac{(\E{T})^2}{B_1}U^2}\frac{1}{n^2}=\underline{O}(U^{-1}),
\end{multline*}
as $U\to\infty$, as required.

\end{document}